\newtheorem{theorem}{Theorem}[section]
\newtheorem{lemma}[theorem]{Lemma}
\newtheorem{conjecture}[theorem]{Conjecture}
\theoremstyle{definition}
\newtheorem{definition}[theorem]{Definition}
\newtheorem{example}[theorem]{Example}
\theoremstyle{remark}
\newtheorem{remark}[theorem]{Remark}
\theoremstyle{notation}
\newtheorem{notation}[theorem]{Notation}
\numberwithin{equation}{section}
\begin{document}

\title[The partially ordered set of one-point extensions]{The partially ordered set of one-point extensions}

\author{M.R. Koushesh}
\address{Department of Mathematical Sciences, Isfahan University of Technology, Isfahan 84156--83111, Iran}
\address{School of Mathematics, Institute for Research in Fundamental Sciences (IPM), P.O. Box: 19395--5746, Tehran, Iran}
\email{koushesh@cc.iut.ac.ir}
\thanks{This research was in part supported by a grant from IPM (No. 86540012).}

\subjclass[2010]{54D35, 54D40, 54D20.}

\keywords{Stone-\v{C}ech compactification; Hewitt realcompactification; One-point extension; One-point compactification;  Mr\'{o}wka's condition (W);  Mr\'{o}wka topological property; Axiom $\Omega$.}

\begin{abstract}
A space $Y$ is called an {\em extension} of a space $X$ if $Y$
contains $X$ as a dense subspace.
Two extensions of $X$ are said to be {\em equivalent} if there is a homeomorphism between them which fixes $X$ point-wise.
For two (equivalence classes of)  extensions $Y$ and $Y'$ of $X$ let $Y\leq Y'$ if there is a continuous function of $Y'$ into $Y$
which fixes  $X$ point-wise.
An extension $Y$ of  $X$ is called a {\em one-point extension} of $X$ if $Y\backslash X$ is a singleton. Let ${\mathcal P}$ be a topological
property. An extension $Y$ of $X$ is called a {\em
${\mathcal P}$-extension} of $X$ if it has ${\mathcal P}$.

One-point ${\mathcal P}$-extensions comprise the subject matter of this article. Here ${\mathcal P}$
is subject to some mild requirements. We define an anti-order-isomorphism between the set of one-point Tychonoff extensions of a (Tychonoff) space $X$ (partially ordered by $\leq$) and the set of compact non-empty subsets of its outgrowth $\beta X\backslash X$ (partially ordered by $\subseteq$). This enables us to study the order-structure of  various sets of one-point extensions of the space $X$ by relating them to the  topologies of certain subspaces of its outgrowth. We conclude the article with the following conjecture. For a Tychonoff spaces $X$ denote by ${\mathscr U}(X)$ the set of all zero-sets of $\beta X$ which miss $X$.\\

\noindent{\bf Conjecture.} {\em For locally compact spaces $X$ and $Y$ the partially ordered sets $({\mathscr U}(X),\subseteq)$ and $({\mathscr U}(Y),\subseteq)$ are order-isomorphic if and only if the spaces $\mbox{\em cl}_{\beta X}(\beta X\backslash\upsilon X)$ and $\mbox{\em cl}_{\beta Y}(\beta Y\backslash\upsilon Y)$ are homeomorphic.}
\end{abstract}

\maketitle

\section{Introduction}

A space $Y$ is called an {\em extension} of a space $X$ if $Y$
contains $X$ as a dense subspace.  If $Y$ is an extension of  $X$ then the subspace $Y\backslash X$ of $Y$ is called the
{\em  remainder} of $Y$.  Extensions with a one-point remainder are called {\em one-point extensions}. Two extensions of a space $X$ are said to be {\em equivalent} if there is a homeomorphism between them
which fixes $X$ point-wise. This defines an equivalence relation on the class of all extensions of a space $X$. The equivalence classes
will be identified with individuals when this causes no confusion. For two (equivalence classes of)  extensions $Y$ and $Y'$ of $X$, we
let $Y\leq Y'$, if there is a continuous function of $Y'$ into $Y$ which fixes  $X$ point-wise. The relation $\leq$ defines  a partial
order on the set of (equivalence classes of) extensions of $X$ (see Section 4.1 of \cite{PW} for more details). Let ${\mathcal P}$ be a topological
property. An extension $Y$ of $X$ is called a {\em
${\mathcal P}$-extension} of $X$, if it possesses ${\mathcal P}$. If ${\mathcal P}$ is compactness, then
${\mathcal P}$-extensions are called  {\em compactifications}. One-point ${\mathcal P}$-extensions are the subject matter of this article.

This work was mainly  motivated by our previous work \cite{Ko2} (see also \cite{Be}, \cite{HJW}, \cite{Ko1}, \cite{Ko4}, \cite{MRW1} and \cite{MRW2}) in which we have studied the partially ordered set of one-point ${\mathcal P}$-extensions of a given locally compact space $X$, by relating it to the topologies of subspaces of its outgrowth $\beta X\backslash X$. Topological properties ${\mathcal P}$  considered in \cite{Ko2}, however, were limited  (mainly to the Lindel\"{o}f property). Here, besides some new results, we generalize most of the results of \cite{Ko2} by studying  the partially ordered set of one-point ${\mathcal P}$-extensions of a space $X$ (where ${\mathcal P}$ is subject to some mild requirements, and ranges over a reasonably  broad class of topological properties as special cases).

This article is organized as follows. In Section 2, we define an anti-order-isomorphism $\Theta_X$ between the set of  one-point Tychonoff extensions of a (Tychonoff) space $X$ (partially ordered by $\leq$) and the set of compact non-empty subsets of its outgrowth $\beta X\backslash X$ (partially ordered by $\subseteq$). We then obtain images (under  $\Theta_X$) of various sets of one-point  extensions of $X$. In Section 3, we prove our first main  result: The set of one-point Tychonoff ${\mathcal P}$-extensions of a space $X$ contains an anti-order-isomorphic copy  of the set of its one-point first-countable Tychonoff extensions. In  Section 4, we study extensions and restrictions of order-isomorphisms between various sets of one-point ${\mathcal P}$-extensions of spaces $X$ and $Y$. In  Section 5, we study the relations between the order-structure of sets of one-point  ${\mathcal P}$-extensions of a space  $X$ and the topologies of subspaces of its outgrowth $\beta X\backslash X$. In Section 6, we consider the set of one-point  ${\mathcal P}$-extensions of a space $X$ as a lattice. And finally, in  Section 7, we  conclude  with  a conjecture which naturally arises in connection with our studies.

We now review some terminology, notation, and well known results that we will  use in the sequel. Our definitions mainly  come from the standard  text \cite{E} (thus, in particular, compact spaces are Hausdorff,  etc.). Other useful sources are \cite{GJ}, \cite{PW} and \cite{W}.

The letters $\mathbf{I}$,  $\mathbf{N}$ and $\mathbf{Q}$ denote the closed unit interval,  the set of all positive integers and the set of all rational numbers, respectively.

For a subset $A$ of a space $X$, we let $\mbox{cl}_X A$ and $\mbox{int}_X A$ to denote the closure and the interior  of the set $A$ in $X$, respectively.

A subset of a space is called {\em clopen}, if it is simultaneously closed and open.
A {\em zero-set} of a space $X$ is a set of the form
$Z(f)=f^{-1}(0)$, for some continuous $f:X\rightarrow \mathbf{I}$.  Any set of the form
$X\backslash Z$, where $Z$ is a zero-set of $X$, is called a {\em
cozero-set} of $X$. We denote the set of all zero-sets of $X$ by
${\mathscr Z}(X)$, and the set of all cozero-sets of $X$ by $Coz(X)$.

For a Tychonoff space $X$, the
{\em Stone-\v{C}ech compactification} of $X$ is the largest (with respect to the partial order of $\leq$)
compactification of $X$, and is  denoted by $\beta X$.  The Stone-\v{C}ech compactification of a Tychonoff $X$ is characterized
among the compactifications of $X$ by either of the following properties:
\begin{itemize}
\item[\rm(1)] Every continuous function of $X$ to a compact space (or $\mathbf{I}$) is continuously extendible over $\beta X$.
\item[\rm(2)] For every $Z,S\in {\mathscr Z}(X)$ we have
\[\mbox{cl}_{\beta X}(Z\cap S)=\mbox{cl}_{\beta X}Z\cap\mbox{cl}_{\beta X}S.\]
\end{itemize}

For a Tychonoff space $X$, the {\em Hewitt realcompactification} of $X$ (denoted by $\upsilon X$) is the intersection of all cozero-sets of $\beta X$ which contain $X$.

A Tychonoff space $X$ is called {\em  \v{C}ech-complete}, if $X$ is a $G_\delta$ in $\beta X$.  Locally compact spaces are  \v{C}ech-complete, and in the realm of metrizable spaces $X$, \v{C}ech-completeness is equivalent to the existence of a compatible complete metric on $X$.

A Tychonoff space $X$ is called {\em  zero-dimensional}, if it has an open base consisting of clopen subsets of $X$. A Tychonoff space $X$ is called {\em strongly zero-dimensional}, if its Stone-\v{C}ech compactification $\beta X$ is zero-dimensional.

Let ${\mathcal P}$ be a topological property. A  space $X$ is called {\em locally-${\mathcal P}$}, if for every $x\in X$
there is an open neighborhood $U_x$ of $x$ in $X$ such that $\mbox{cl}_XU_x$ has ${\mathcal P}$.

A topological property $\mathcal{P}$  is said to be {\em hereditary with respect to closed subsets} if each closed subset of a space with $\mathcal{P}$ also has $\mathcal{P}$. A  topological property ${\mathcal P}$ is said to be  {\em preserved under finite} ({\em locally finite}, {\em countable}, respectively)  {\em closed sums of subspaces}, if any
Hausdorff space which is expressible as a  finite (locally finite, countable, respectively) union of its closed
${\mathcal P}$-subsets has ${\mathcal P}$.

In a partially ordered set $(P,\leq)$ the two symbols $\vee$ and $\wedge$ denote the least upper bound and the greatest lower bound (provided they exist), respectively. A partially ordered set $(P,\leq)$ is called a {\em lattice} if together with each pair of elements $a,b\in P$ it contains $a\vee b$ and $a\wedge b$. A partially ordered set $(P,\leq)$ is called a {\em complete upper semilattice} ({\em complete lower semilattice}, respectively)  if for each non-empty $A\subseteq P$ the least upper bound $\bigvee A$ (the greatest lower bound $\bigwedge A$, respectively) exists in $P$. A partially ordered set $(P,\leq)$ is called a {\em complete lattice} if it is both a complete upper semilattice and a complete lower semilattice.

For partially ordered sets $(P,\leq)$ and $(Q,\leq)$, a function $f:(P,\leq)\rightarrow(Q,\leq)$ is called an {\em order-homomorphism} ({\em anti-order-homomorphism}, respectively), if $f(a)\leq f(b)$ ($f(b)\leq f(a)$, respectively) whenever $a\leq b$. An order-homomorphism (anti-order-homomorphism, respectively) $f:(P,\leq)\rightarrow(Q,\leq)$ is called an {\em order-isomorphism} ({\em anti-order-isomorphism}, respectively), if $f^{-1}:(Q,\leq)\rightarrow(P,\leq)$ (exists and) is an order-homomorphism (anti-order-homomorphism, respectively). Two partially ordered sets $(P,\leq)$ and $(Q,\leq)$ are said to be {\em order-isomorphic} ({\em anti-order-isomorphic}, respectively), if there is an  order-isomorphism (anti-order-isomorphism, respectively) between them.

\section{Basic definitions and preliminary results}

In what follows, we will be dealing with various sets of one-point extensions of a given space $X$.
For the reader's convenience, we list these sets all at the beginning. First, a definition.

\begin{definition}\label{JSB}
{\em Let $X$ be a space and let ${\mathcal P}$ be a topological property.
An extension $Y$ of $X$ is called a {\em ${\mathcal P}$-far extension} of $X$, if for every  $Z\in {\mathscr Z}(X)$ we have
$\mbox{cl}_Y Z\cap (Y\backslash X)=\emptyset$, whenever
$Z\subseteq C$  for some $C\in Coz(X)$ such that $\mbox{cl}_X C$ has ${\mathcal P}$.}
\end{definition}

\begin{notation}\label{JHB}
{\em For a space $X$ and  a topological property  ${\mathcal P}$, denote
\begin{itemize}
  \item ${\mathscr E}(X)=\{Y:Y \mbox{ is a one-point Tychonoff extension of } X\}$
  \item ${\mathscr E}^*(X)=\{Y\in{\mathscr E}(X):Y \mbox{ is first-countable at } Y\backslash X\}$
  \item ${\mathscr E}^K(X)=\{Y\in{\mathscr E}(X):Y \mbox{ is locally compact}\}$
  \item ${\mathscr E}^C(X)=\{Y\in{\mathscr E}(X):Y \mbox{ is \v{C}ech-complete}\}$
  \item ${\mathscr E}_{{\mathcal P}}(X)=\{Y\in{\mathscr E}(X) :Y \mbox{ has  } {\mathcal P}\}$
  \item ${\mathscr E}_{{\mathcal P}-far}(X)=\{Y\in{\mathscr E}(X) :Y \mbox{ is ${\mathcal P}$-far}\}$.
\end{itemize}
Also, we may use notations obtained by combining the above notations, e.g.
\[{\mathscr E}^C_{{\mathcal P}}(X)={\mathscr E}^C(X)\cap{\mathscr E}_{{\mathcal P}}(X).\]}
\end{notation}

\begin{notation}\label{KJB}
{\em For a Tychonoff space $X$  and  $Y\in {\mathscr E}(X)$ denote by
\[\tau_Y:\beta X\rightarrow \beta Y\]
the unique continuous extension of $\mbox{id}_X$.}
\end{notation}

Theorem \ref{JHV} establishes  a connection between one-point Tychonoff extensions of a (Tychonoff) space $X$ and compact non-empty
subsets of its outgrowth $\beta X\backslash X$. Theorem \ref{JHV} (and its preceding lemmas) is known (see e.g. \cite{MRW1}), the  proof  is given  however, for the sake of completeness.

\begin{lemma}\label{KFH}
Let $X$ be a Tychonoff  space and let $C$ be a non-empty compact subset of $\beta X\backslash X$. Let $T$ be the space which is obtained from $\beta X$
by contracting $C$ to a point $p$. Then the subspace $Y=X\cup\{p\}$ of $T$ is Tychonoff and $\beta Y=T$.
\end{lemma}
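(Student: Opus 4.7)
The plan is to realize $T$ as the Stone-\v{C}ech compactification of $Y=X\cup\{p\}$, after first checking that the quotient $T$ itself is compact Hausdorff. Write $q:\beta X\rightarrow T$ for the quotient map. Since $C$ is a closed subset of the compact (hence normal) space $\beta X$, the standard fact that collapsing a closed subset of a normal Hausdorff space yields a Hausdorff quotient shows that $T$ is Hausdorff; as a continuous image of $\beta X$, it is also compact. Thus $T$ is compact Hausdorff, and its subspace $Y$ is automatically Tychonoff. Because $X\cap C=\emptyset$, the map $q$ is injective on $X$, and we identify $q(X)$ with $X$; since $X$ is dense in $\beta X$ and $q$ is a continuous surjection, $X$ (and hence $Y$) is dense in $T$, so $T$ is a compactification of $Y$.

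To identify $T$ with $\beta Y$, I will verify the first of the two characterizing properties of the Stone-\v{C}ech compactification listed in the introduction: every continuous $f:Y\rightarrow\mathbf{I}$ extends continuously to $T$. Given such an $f$, extend its restriction $f|_X$ to a continuous $F:\beta X\rightarrow\mathbf{I}$. The crucial step is to show that $F\equiv f(p)$ on $C$. Fix $c\in C$ and $\varepsilon>0$. Continuity of $f$ at $p$, together with the description of neighborhoods of $p$ in $T$ (precisely the $q$-images of open subsets of $\beta X$ containing $C$), yields an open $U\subseteq\beta X$ with $C\subseteq U$ and $|f(x)-f(p)|<\varepsilon$ for all $x\in U\cap X$. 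Density of $X$ in $\beta X$ gives $c\in U\subseteq\mbox{cl}_{\beta X}(U\cap X)$, so continuity of $F$ forces $F(c)$ to lie within $\varepsilon$ of $f(p)$; letting $\varepsilon\rightarrow 0$ yields $F(c)=f(p)$.

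Consequently $F$ is constant on every fiber of $q$ (singletons off $C$, and $C$ itself on which $F$ takes the value $f(p)$), so the universal property of the quotient map produces a continuous $\bar F:T\rightarrow\mathbf{I}$ with $\bar F\circ q=F$. Restricting $\bar F$ to $Y$ gives $\bar F|_X=F|_X=f|_X$ and $\bar F(p)=F(c)=f(p)$, so $\bar F$ extends $f$. By the extension characterization of $\beta Y$, this forces $T=\beta Y$.

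The main technical obstacle is the identification $F(c)=f(p)$ on all of $C$: one must carefully exploit that $X$ is dense in $\beta X$ so that the trace on $X$ of an open neighborhood of $C$ in $\beta X$ still accumulates to every point of $C$; this is what allows continuity of $f$ at the single collapsed point $p$ to dictate the value of $F$ uniformly on the entire compact set $C$. Once this is established, the remaining verifications are straightforward functoriality of the quotient construction together with the universal property of $\beta X$.
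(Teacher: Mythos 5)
Your proof is correct and follows the same overall strategy as the paper: establish that $T$ is a compactification of $Y$, then verify that every continuous $f:Y\rightarrow\mathbf{I}$ extends over $T$ by producing a lift on $\beta X$ that is constant on the fibers of $q$ and factoring through the quotient. The one point of genuine divergence is how the lift is arranged to be constant on $C$. The paper extends the composite $h\circ q$ from the dense subspace $X\cup C$ of $\beta X$, invoking $\beta(X\cup C)=\beta X$ (Corollary 3.6.9 of \cite{E}), so that the resulting $G$ is constant on $C$ by construction. You instead extend only $f|_X$ to $F$ on $\beta X$ and then prove $F\equiv f(p)$ on $C$ directly: continuity of $f$ at $p$ produces, for each $\varepsilon>0$, an open $U\supseteq C$ in $\beta X$ with $\abs{f(x)-f(p)}<\varepsilon$ on $U\cap X$, and the inclusion $U\subseteq\mbox{cl}_{\beta X}(U\cap X)$ (density of $X$) forces $\abs{F(c)-f(p)}\leq\varepsilon$ for every $c\in C$. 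Your version is marginally more elementary, needing only the $C^*$-embedding of $X$ in $\beta X$ plus a density argument, whereas the paper's is shorter once the cited corollary on intermediate subspaces is granted. Both arguments are complete, and your identification of the constancy of $F$ on $C$ as the crux is exactly right.
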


\begin{proof}
Let $q:\beta X\rightarrow T$ be the quotient mapping. Note that $T$ is Hausdorff, and therefore, being the continuous image of $\beta X$, it is compact.
Also, note that $Y$ is dense in $T$. Thus $T$ is a compactification of $Y$. To show that $\beta Y=T$, it suffices to verify that every continuous
$h:Y\rightarrow{\mathbf I}$ is continuously extendable over $T$. Let $h:Y\rightarrow{\mathbf I}$ be  continuous.  Let $G:\beta X\rightarrow{\mathbf I}$  continuously extend $hq|(X\cup C):X\cup C\rightarrow{\mathbf I}$
(note that $\beta(X\cup C)=\beta X$, as $X\subseteq X\cup C\subseteq \beta X$; see Corollary 3.6.9 of \cite{E}). Define  $H:T\rightarrow{\mathbf I}$ such that
$H|(\beta X\backslash C)=G|(\beta X\backslash C)$ and $H(p)=h(p)$. Then $H|Y=h$, and since $Hq=G$ is continuous, the function $H$ is continuous.
\end{proof}

\begin{lemma}\label{JFV}
Let $X$ be a Tychonoff space and let $Y=X\cup\{p\}\in {\mathscr E}(X)$. Let $T$ be the
space which is obtained from $\beta X$ by contracting  $\tau_Y^{-1}(p)$ to a
point $p$, and let $q:\beta X\rightarrow T$ be the
quotient mapping. Then $T=\beta Y$  and $\tau_Y=q$.
\end{lemma}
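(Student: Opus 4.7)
The plan is to invoke Lemma \ref{KFH} with $C=\tau_Y^{-1}(p)$. First I would check that $C$ is a non-empty compact subset of $\beta X\backslash X$: closedness (hence compactness) is immediate from continuity of $\tau_Y$; $C\cap X=\emptyset$ because $\tau_Y|_X=\mbox{id}_X$ and $p\notin X$; and $C\neq\emptyset$ because $\tau_Y(\beta X)$ is a compact subset of $\beta Y$ containing the dense set $X$, hence equals $\beta Y$. Writing $p'$ for the contracted point $q(C)\in T$, Lemma \ref{KFH} then gives that $X\cup\{p'\}$ is Tychonoff and $T=\beta(X\cup\{p'\})$.

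The main step is to show that $Y$ and $X\cup\{p'\}$ are equivalent as one-point extensions of $X$, via the bijection $\phi:Y\to X\cup\{p'\}$ which fixes $X$ pointwise and sends $p$ to $p'$. Continuity of $\phi^{-1}$ is immediate: the map $f:T\to\beta Y$ defined by $fq=\tau_Y$ is well-defined (if $q(w_1)=q(w_2)$ then either $w_1=w_2$ or $\{w_1,w_2\}\subseteq C$, and $\tau_Y$ is constant $p$ on $C$) and continuous by the quotient universal property; its restriction to $X\cup\{p'\}$ is exactly $\phi^{-1}$. For the continuity of $\phi$, a basic open neighborhood of $p'$ in $X\cup\{p'\}$ has the form $V\cup\{p'\}$ with $V=U\cap X$ for some open $U\supseteq C$ in $\beta X$, and I must show that $V\cup\{p\}$ is open in $Y$, equivalently that $p\notin\mbox{cl}_Y(X\backslash V)$. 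The key observation is that $\mbox{cl}_{\beta X}(X\backslash V)\subseteq\beta X\backslash U$ is disjoint from $C$; its $\tau_Y$-image is closed in $\beta Y$ and contains $X\backslash V$, hence contains $\mbox{cl}_{\beta Y}(X\backslash V)$. If $p$ were in this closure, it would equal $\tau_Y(w)$ for some $w\in\mbox{cl}_{\beta X}(X\backslash V)$, forcing $w\in C$---a contradiction. Since $\mbox{cl}_Y(X\backslash V)=\mbox{cl}_{\beta Y}(X\backslash V)\cap Y$, this yields $p\notin\mbox{cl}_Y(X\backslash V)$.

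Thus $\phi$ is a homeomorphism, so $X\cup\{p'\}$ is equivalent to $Y$ as an extension of $X$, and therefore $T=\beta(X\cup\{p'\})=\beta Y$. Under this identification both $\tau_Y$ and $q$ are continuous extensions to $\beta X$ of the inclusion $X\hookrightarrow T=\beta Y$, so they agree by the uniqueness of continuous extensions to a compact Hausdorff codomain. The main obstacle is the trace computation in the preceding paragraph, which must carefully relate closures in $\beta X$, $\beta Y$, and $Y$ through the map $\tau_Y$ using the identity $\tau_Y^{-1}(p)=C$; everything else is either a routine verification or a direct appeal to Lemma \ref{KFH}.
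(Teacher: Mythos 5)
Your proof is correct and follows essentially the same route as the paper: both verify that the canonical bijection between $Y$ and the subspace $X\cup\{p'\}$ of $T$ is a homeomorphism (the nontrivial direction being openness of neighborhoods of the contracted point, handled in both cases by observing that $\tau_Y(\mbox{cl}_{\beta X}(X\backslash V))$ is compact, hence closed in $\beta Y$, and misses $p$), then invoke Lemma \ref{KFH} and conclude $\tau_Y=q$ by density. Your use of the quotient's universal property to produce $f$ with $fq=\tau_Y$ is only a cosmetic variant of the paper's direct computation $Y\cap W=Y\cap q(\tau_Y^{-1}(W))$.
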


\begin{proof}
We need to show that $Y$ is a subspace of $T$. Since $\beta Y$ is also a compactification of $X$ and
$\tau_Y|X=\mbox{id}_X$, by Theorem 3.5.7 of \cite{E} we have  $\tau_Y(\beta X\backslash X)=\beta Y\backslash X$.  For an open subset $W$ of $\beta Y$, the set  $q(\tau_Y^{-1}(W))$ is open in $T$, as $q^{-1}(q(\tau_Y^{-1}(W)))=\tau_Y^{-1}(W)$ is open in $\beta X$. Thus $Y\cap W= Y\cap q(\tau_Y^{-1}(W))$ is open in $Y$, when $Y$ is considered as  a subspace of $T$. For the converse, note that if $V$ is open in $T$, since
\[Y\cap V=Y\cap \big(\beta Y\backslash\tau_Y\big(\beta X\backslash q^{-1}(V)\big)\big)\]
and  $\tau_Y(\beta X\backslash q^{-1}(V))$ is compact and therefore closed in $\beta Y$, the set $Y\cap V$ is open in $Y$ in its original topology.
By Lemma \ref{KFH} we have $T=\beta Y$. This
also implies that $\tau_Y=q$, as $\tau_Y,q:\beta X\rightarrow\beta Y$ are
continuous and coincide with $\mbox{id}_X$ on the dense subset $X$ of $\beta X$.
\end{proof}

\begin{lemma}\label{DFH}
Let $X$ be a Tychonoff space. Let $Y_i\in{\mathscr E}(X)$, for $i=1,2$, and denote by $\tau_i=\tau_{Y_i}:\beta X\rightarrow \beta Y_i$ the continuous
extension of $\mbox{\em id}_X$. The following are equivalent:
\begin{itemize}
\item[\rm(1)] $Y_1\leq Y_2$.
\item[\rm(2)] $\tau^{-1}_2(Y_2\backslash X)\subseteq\tau^{-1}_1(Y_1\backslash X)$.
\end{itemize}
\end{lemma}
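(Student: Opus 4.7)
The plan is to prove the equivalence by exploiting that $\tau_1,\tau_2$ are continuous extensions of $\mathrm{id}_X$ and that $\beta Y_i$ realizes $\beta X$ modulo $\tau_i^{-1}(Y_i\setminus X)$ via Lemma \ref{JFV}. Write $Y_i=X\cup\{p_i\}$ and note that $\tau_i^{-1}(Y_i\setminus X)=\tau_i^{-1}(p_i)$, which by Theorem 3.5.7 of \cite{E} sits inside $\beta X\setminus X$.

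For (1)$\Rightarrow$(2): Given a continuous $f:Y_2\to Y_1$ with $f|X=\mathrm{id}_X$, let $\beta f:\beta Y_2\to\beta Y_1$ be its Stone-\v{C}ech extension. Then both $\beta f\circ\tau_2$ and $\tau_1$ are continuous maps $\beta X\to\beta Y_1$ agreeing with $\mathrm{id}_X$ on the dense set $X$, so $\tau_1=\beta f\circ\tau_2$. Next, I would argue that $f(p_2)=p_1$: if instead $f(p_2)=x_0\in X$, then for any $z\in\tau_2^{-1}(p_2)\subseteq\beta X\setminus X$ we would obtain $\tau_1(z)=\beta f(\tau_2(z))=\beta f(p_2)=x_0\in X$, contradicting $\tau_1(\beta X\setminus X)=\beta Y_1\setminus X$ (again Theorem 3.5.7 of \cite{E}). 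Hence for $z\in\tau_2^{-1}(p_2)$ we have $\tau_1(z)=\beta f(p_2)=p_1$, giving $\tau_2^{-1}(p_2)\subseteq\tau_1^{-1}(p_1)$.

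For (2)$\Rightarrow$(1): By Lemma \ref{JFV}, the map $\tau_2:\beta X\to\beta Y_2$ is the quotient map collapsing $\tau_2^{-1}(p_2)$ to $p_2$, and similarly for $\tau_1$. The inclusion $\tau_2^{-1}(p_2)\subseteq\tau_1^{-1}(p_1)$ says the fibers of $\tau_2$ refine those of $\tau_1$, so the rule $g(\tau_2(z))=\tau_1(z)$ gives a well-defined function $g:\beta Y_2\to\beta Y_1$ with $g\circ\tau_2=\tau_1$; well-definedness uses that the only non-singleton fiber of $\tau_2$ is $\tau_2^{-1}(p_2)$, which is sent by $\tau_1$ into $\{p_1\}$. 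Since $\tau_2$ is a quotient map and $\tau_1=g\circ\tau_2$ is continuous, $g$ is continuous. Setting $f=g|Y_2$, I would check $f(X)\subseteq X$ (as $f|X=\mathrm{id}_X$) and $f(p_2)=g(p_2)=p_1\in Y_1$, so $f:Y_2\to Y_1$ is a continuous map fixing $X$ pointwise, giving $Y_1\leq Y_2$.

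The main obstacle is the direction (1)$\Rightarrow$(2), specifically ruling out $f(p_2)\in X$; the resolution relies crucially on the fact that $\tau_1$ sends $\beta X\setminus X$ entirely into $\beta Y_1\setminus X$. The rest is essentially bookkeeping with uniqueness of Stone-\v{C}ech extensions and the quotient description from Lemma \ref{JFV}.
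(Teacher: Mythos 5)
Your proof is correct and follows essentially the same route as the paper's: the forward direction uses the Stone extension of $f$ together with Theorem 3.5.7 of \cite{E} to force $f(p_2)=p_1$, and the converse uses the quotient description of $\tau_2$ from Lemma \ref{JFV} to transfer continuity. The only cosmetic differences are that you apply Theorem 3.5.7 to $\tau_1$ rather than to $f_\beta$, and you build the induced map on all of $\beta Y_2$ before restricting to $Y_2$, whereas the paper verifies continuity directly on $Y_2$ as a quotient of $X\cup\tau_2^{-1}(p_2)$.
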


\begin{proof}
Let $Y_i=X\cup\{p_i\}$ for $i=1,2$.  (1) {\em  implies} (2).  By definition, there is a continuous
$f:Y_2\rightarrow Y_1$ such that $f|X= \mbox{id}_X$. Let $f_\beta:\beta Y_2\rightarrow \beta Y_1$ continuously  extend $f$. Note that the continuous
functions $f_\beta\tau_2,\tau_1:\beta X\rightarrow \beta Y_1$ coincide  with $\mbox{id}_X$ on the dense subset $X$ of $\beta X$, and thus
$f_\beta\tau_2= \tau_1$. Also, note that $X$ is dense in $\beta Y_i$ (for $i=1,2$), as it is  dense in $Y_i$, and therefore,  $\beta Y_i$ is a compactification
of $X$. Since $f_\beta|X=\mbox{id}_X$, by Theorem 3.5.7 of \cite{E} we have $f_\beta(\beta Y_2\backslash X)=\beta Y_1\backslash X$, and thus $f_\beta(p_2)\in \beta Y_1\backslash X$. But $f_\beta(p_2)=f(p_2)$, which implies that $f_\beta(p_2)\in Y_1\backslash X=\{p_1\}$.  Therefore
\[\tau^{-1}_2(p_2)\subseteq\tau^{-1}_2\big(f_\beta^{-1}\big(f_\beta(p_2)\big)\big)=(f_\beta\tau_2)^{-1}\big(f_\beta(p_2)\big)
=\tau_1^{-1}\big(f_\beta(p_2)\big)=\tau_1^{-1}(p_1).\]

(2) {\em  implies} (1).  Let $f:Y_2\rightarrow Y_1$ be defined  such that $f(p_2)=p_1$ and $f|X=\mbox{id}_X$. We show that $f$ is continuous, this will show that $Y_1\leq Y_2$. Note that by Lemma \ref{JFV}, the space $\beta Y_2$ is the quotient space of $\beta X$ which is  obtained by contracting  $\tau^{-1}_2(p_2)$ to a point, and $\tau_2$ is its corresponding quotient mapping. Thus, in particular, $Y_2$ is the quotient space of $X\cup \tau^{-1}_2 (p_2)$, and therefore, to show that $f$ is continuous, it suffices to show that $f\tau_2|(X\cup\tau^{-1}_2 (p_2))$ is continuous. We show this by verifying that $f\tau_2(t)=\tau_1(t)$ for  $t\in X\cup\tau^{-1}_2 (p_2)$. This obviously holds if $t\in X$. If $t\in \tau^{-1}_2 (p_2)$, then $\tau_2(t)=p_2$, and thus $f\tau_2 (t)=p_1$. But since $t\in \tau^{-1}_2(\tau_2 (t))$, we have $t\in \tau^{-1}_1(p_1)$, and therefore $\tau_1(t)=p_1$. Thus $f\tau_2(t)=\tau_1(t)$ in this case as well.
\end{proof}

\begin{theorem}\label{JHV}
Let $X$ be a Tychonoff space. Define a function
\[\Theta:\big({\mathscr E}(X),\leq\big)\rightarrow\big(\{C\subseteq\beta X\backslash X:C \mbox{ is compact} \}\backslash\{\emptyset\},\subseteq\big)\]
by
\[\Theta(Y)=\tau^{-1}_Y(Y\backslash X)\]
for $Y\in{\mathscr E}(X)$. Then $\Theta$ is an anti-order-isomorphism.
\end{theorem}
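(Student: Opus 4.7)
The plan is to verify the four required properties of $\Theta$ separately, leveraging Lemmas \ref{KFH}, \ref{JFV}, and \ref{DFH} which already do most of the heavy lifting.

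First I would check that $\Theta$ is well-defined, i.e.\ that $\tau_Y^{-1}(Y\backslash X)$ really is a non-empty compact subset of $\beta X\backslash X$. Write $Y=X\cup\{p\}$. The set $\tau_Y^{-1}(p)$ is closed in the compact space $\beta X$, hence compact. Since $\tau_Y|X=\mbox{id}_X$, no point of $X$ maps to $p$, so $\tau_Y^{-1}(p)\subseteq\beta X\backslash X$. Non-emptiness follows from Theorem 3.5.7 of \cite{E}: as both $\beta X$ and $\beta Y$ are compactifications of $X$, the extension $\tau_Y$ is surjective, in particular $p\in\tau_Y(\beta X)$.

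Next I would establish the anti-order property together with injectivity, both of which fall out of Lemma \ref{DFH}. Indeed, that lemma asserts precisely that $Y_1\leq Y_2$ is equivalent to $\Theta(Y_2)\subseteq\Theta(Y_1)$, so $\Theta$ is an anti-order-homomorphism. Injectivity is then automatic: if $\Theta(Y_1)=\Theta(Y_2)$, the lemma gives both $Y_1\leq Y_2$ and $Y_2\leq Y_1$, so the extensions are equivalent and hence equal as elements of ${\mathscr E}(X)$ (which consists of equivalence classes).

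For surjectivity I would invoke Lemmas \ref{KFH} and \ref{JFV}. Given a non-empty compact $C\subseteq\beta X\backslash X$, form the quotient $T$ of $\beta X$ obtained by contracting $C$ to a single point $p$, with quotient map $q:\beta X\to T$. By Lemma \ref{KFH}, the subspace $Y=X\cup\{p\}$ of $T$ lies in ${\mathscr E}(X)$ and $\beta Y=T$; by Lemma \ref{JFV} (applied in the other direction, since $\tau_Y$ and $q$ are two continuous maps $\beta X\to T$ agreeing with $\mbox{id}_X$ on the dense subset $X$) we get $\tau_Y=q$. Hence $\Theta(Y)=\tau_Y^{-1}(p)=q^{-1}(p)=C$, as desired. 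Finally, to conclude that $\Theta$ is an anti-order-isomorphism rather than merely an order-reversing bijection, I would note that the inverse is also order-reversing: this is again just the reverse direction of Lemma \ref{DFH}.

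The main conceptual obstacle has already been absorbed into the preceding lemmas, namely that every one-point Tychonoff extension of $X$ arises from collapsing a compact subset of $\beta X\backslash X$, and that the order on extensions is captured by reverse inclusion of these fibers. The only point requiring a little care in writing the proof is keeping the identification of equivalence classes in ${\mathscr E}(X)$ explicit, so that injectivity really does yield equality rather than just homeomorphism fixing $X$.
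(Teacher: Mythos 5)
Your proposal is correct and follows essentially the same route as the paper: well-definedness via surjectivity of $\tau_Y$ and closedness of the fiber, anti-order-preservation and injectivity from Lemma \ref{DFH}, and surjectivity by contracting a given compact set $C$ and invoking Lemma \ref{KFH} together with the density argument identifying $q$ with $\tau_Y$. The only cosmetic difference is that you cite Theorem 3.5.7 of \cite{E} for surjectivity of $\tau_Y$ where the paper argues it directly from compactness of $\tau_Y(\beta X)$ and density of $X$ in $\beta Y$; both are standard and equivalent.
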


\begin{proof}
To show that  $\Theta$ is well-defined, let $Y\in{\mathscr E}(X)$. Note that since $X$ is  dense in $Y$, the space $X$ is dense in  $\beta Y$. Therefore     $\tau_Y:\beta X\rightarrow \beta Y$ is onto, as $\tau_Y(\beta X)$ is a compact subset of $\beta Y$ and it contains $X=\tau_Y(X)$. Thus  $\tau^{-1}_Y(Y\backslash X)\neq\emptyset$. Also,  since $\tau_Y|X=\mbox{id}_X$ we have $\tau^{-1}_Y(Y\backslash X)\subseteq\beta X\backslash X$, and since the singleton $Y\backslash X$ is closed in $\beta Y$, its inverse image $\tau^{-1}_Y(Y\backslash X)$ is closed in $\beta X$, and therefore is compact. Now we show that $\Theta$ is onto,  Lemma \ref{DFH} will then complete the proof. Let $C$  be a non-empty  compact subset of $\beta X\backslash X$. Let $T$ be the quotient space of $\beta X$ which is obtained by contracting $C$ to a point $p$. Consider the subspace $Y=X\cup\{p\}$  of $T$. Then $Y\in{\mathscr E}(X)$, and thus, by Lemma \ref{KFH} we have $\beta Y=T$. The quotient mapping $q:\beta X\rightarrow T$ is identical to $\tau_Y$, as it coincides with $\mbox{id}_X$ on the dense subset $X$ of $\beta X$. Therefore
\[\Theta(Y)=\tau^{-1}_Y(p)=q^{-1}(p)=C.\]
\end{proof}

\begin{notation}\label{GFD}
{\em For a  Tychonoff space $X$ denote by
\[\Theta_X:\big({\mathscr E}(X),\leq\big)\rightarrow\big(\{C\subseteq \beta X\backslash X:C \mbox{ is compact} \}\backslash\{\emptyset\},\subseteq\big)\]
the anti-order-isomorphism defined by
\[\Theta_X(Y)=\tau^{-1}_Y(Y\backslash X)\]
for $Y\in {\mathscr E}(X)$.}
\end{notation}

Lemmas \ref{JYYH}, \ref{FSYH} and \ref{HDJYYH} are from  \cite{Ko2}. We include the proofs here for the sake of completeness.

\begin{lemma}\label{JYYH}
Let $X$ be a Tychonoff  space. For $Y\in{\mathscr E}(X)$ the following are equivalent:
\begin{itemize}
\item[\rm(1)] $Y\in{\mathscr E}^*(X)$.
\item[\rm(2)] $\Theta_X(Y)\in {\mathscr Z}(\beta X)$.
\end{itemize}
\end{lemma}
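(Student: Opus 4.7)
The plan is to exploit the quotient description from Lemma \ref{JFV}: $\tau_Y:\beta X\to\beta Y$ is the quotient map obtained by collapsing $C := \Theta_X(Y) = \tau_Y^{-1}(p)$ to the point $p$, with all other fibers singletons. Consequently, an open subset $W\subseteq\beta X$ is $\tau_Y$-saturated precisely when $C\subseteq W$ or $C\cap W=\emptyset$, and the assignments $W\mapsto\tau_Y(W)$ and $V\mapsto\tau_Y^{-1}(V)$ give a bijection between open neighborhoods of $C$ in $\beta X$ and open neighborhoods of $p$ in $\beta Y$. Condition (1) says $p$ has a countable local base in $Y$, while (2) says $C$ is a closed $G_\delta$ in the compact Hausdorff (hence normal) space $\beta X$, equivalently a zero-set.

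For (2)$\Rightarrow$(1), I would fix $g:\beta X\to\mathbf{I}$ with $Z(g)=C$ and set $W_n = g^{-1}([0,1/n))$ and $F_n = g^{-1}([0,1/n])$. Each $W_n$ is open in $\beta X$ and contains $C$, so is $\tau_Y$-saturated, and $U_n := \tau_Y(W_n)\cap Y = (W_n\cap X)\cup\{p\}$ is open in $Y$. Given any $Y$-open neighborhood $U$ of $p$, write $U = V\cap Y$ with $V$ open in $\beta Y$, and set $W = \tau_Y^{-1}(V)$; the $F_n$ form a decreasing sequence of compact subsets of $\beta X$ with $\bigcap_n F_n = C\subseteq W$, so by compactness some $F_n\subseteq W$, whence $W_n\subseteq W$ and $U_n\subseteq U$. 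Thus $\{U_n\}$ is a countable local base at $p$ in $Y$.

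For (1)$\Rightarrow$(2), let $\{U_n\}$ be a countable local base at $p$ in $Y$, write $U_n = V_n\cap Y$ with $V_n$ open in $\beta Y$, and set $W_n = \tau_Y^{-1}(V_n)$, so each $W_n$ is a $\tau_Y$-saturated open neighborhood of $C$ in $\beta X$. I claim $\{W_n\}$ is a neighborhood base of $C$ in $\beta X$. Granting this, Hausdorffness of $\beta X$ and compactness of $C$ force $C = \bigcap_n W_n$, so $C$ is a closed $G_\delta$ in $\beta X$; normality of $\beta X$ then upgrades this to $C\in\mathscr{Z}(\beta X)$ by the standard Urysohn argument. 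To prove the claim, fix an open $W\subseteq\beta X$ with $C\subseteq W$ and apply Urysohn's lemma in $\beta X$ to obtain $g:\beta X\to\mathbf{I}$ with $g|C\equiv 0$ and $g\equiv 1$ on $\beta X\setminus W$. Since $g$ is constant on the fiber $C$ (the other fibers being singletons), it factors as $g = h\tau_Y$ with $h:\beta Y\to\mathbf{I}$ continuous and $h(p)=0$. The set $O := \{y\in Y : h(y)<1/2\}$ is a $Y$-open neighborhood of $p$, so some $U_n\subseteq O$, and then on $W_n\cap X = U_n\cap X$ we have $g<1/2$. Density of $X$ in $\beta X$ (so $W_n\cap X$ is dense in the open set $W_n$) and continuity of $g$ upgrade this to $g\le 1/2$ on $W_n$, and since $g\equiv 1$ off $W$ we conclude $W_n\subseteq W$.

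The main obstacle is (1)$\Rightarrow$(2): there is no a priori reason the $V_n$ attached to a local base at $p$ in $Y$ should satisfy $\bigcap_n V_n = \{p\}$ in $\beta Y$, since points of $\beta Y\setminus Y$ could lie in every $V_n$. The Urysohn-and-descend device above circumvents this by reducing the comparison of two rather different kinds of neighborhoods (of $C$ in $\beta X$ versus of $p$ in $Y$) to a single continuous function, after which the density of $X$ in $\beta X$ propagates control from $X$-points out to all of $W_n$.
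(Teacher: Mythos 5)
Your proof is correct. The direction (2)$\Rightarrow$(1) is essentially the paper's argument: both take a continuous $g$ with $Z(g)=\Theta_X(Y)$, push the saturated open sets $g^{-1}([0,1/n))$ forward through the quotient map $\tau_Y$, and use compactness of the sets $g^{-1}([0,1/n])$ to verify the base property at $p$. The direction (1)$\Rightarrow$(2) is where you genuinely diverge. The paper works inside $\beta Y$: to each basic neighborhood $V_n$ of $p$ in $Y$ it attaches a function $f_n:\beta Y\rightarrow\mathbf{I}$ vanishing at $p$ and equal to $1$ off an open extension $V_n'$, and shows directly that $\bigcap_{n}Z(f_n)=\{p\}$, the key point being the closure identity $\mbox{cl}_{\beta Y}V_n'=\mbox{cl}_{\beta Y}(Y\cap V_n')$ supplied by density of $Y$ in $\beta Y$; the zero-set $\{p\}$ is then pulled back along $\tau_Y$. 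You instead work inside $\beta X$: you pull the $V_n$ back to saturated open sets $W_n\supseteq C=\Theta_X(Y)$ and prove these form a neighborhood base of $C$, which exhibits $C$ as a closed $G_\delta$ in the normal space $\beta X$ and hence as a zero-set. Your key device --- factoring a Urysohn function $g$ for the pair $(C,\beta X\backslash W)$ through the quotient as $g=h\tau_Y$ and then using density of $W_n\cap X$ in the open set $W_n$ to upgrade $g<1/2$ on $W_n\cap X$ to $g\leq 1/2$ on $W_n$ --- is the same density trick in a different guise, and it correctly handles the genuine obstacle you identify (points of $\beta Y\backslash Y$ lying in every $V_n$). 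What your route buys is that it never needs to realize the singleton $\{p\}$ as a zero-set of $\beta Y$; what the paper's route buys is that it avoids the quotient-factorization step and the appeal to the fact that a closed $G_\delta$ in a normal space is a zero-set. Both arguments are complete.
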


\begin{proof}
Let $Y=X\cup\{p\}$. (1) {\em  implies} (2). Let $\{V_n:n\in\mathbf{N}\}$ be an open base at $p$ in $Y$. For each
$n\in \mathbf{N}$, let $V'_n$ be an open subset of $\beta Y$ such that $Y\cap V'_n=V_n$, and let $f_n:\beta Y\rightarrow\mathbf{I}$ be continuous and such that $f_n(p)=0$ and $f_n|(\beta Y\backslash V'_n)\equiv 1$. Let
\[Z=\bigcap_{n=1}^\infty Z(f_n)\in {\mathscr Z}(\beta Y).\]
We show that $Z=\{p\}$. Obviously, $p\in Z$. Let $t\in Z$ and suppose to the contrary that $t\neq p$. Let $W$ be an open neighborhood of $p$ in $\beta Y$ such that $t\notin \mbox{cl}_{\beta Y}W$. Then $Y\cap W$ is an open neighborhood of $p$ in $Y$. Let  $k\in \mathbf{N}$ be such that $V_k\subseteq Y\cap W$. We have
\[t\in Z(f_k)\subseteq V'_k \subseteq\mbox{cl}_{\beta Y}V'_k=\mbox{cl}_{\beta Y}(Y\cap V'_k)=\mbox{cl}_{\beta Y}V_k\subseteq\mbox{cl}_{\beta Y}(Y\cap W)\subseteq\mbox{cl}_{\beta Y}W\]
which is a contradiction. This shows that  $t=p$ and therefore $Z\subseteq \{p\}$. Thus $\{p\}=Z\in {\mathscr Z}(\beta Y)$, which implies that
$\tau^{-1}_Y(p)\in {\mathscr Z}(\beta X)$.

(2) {\em  implies} (1). Let $\tau^{-1}_Y(p)=Z(f)$ where $f:\beta X\rightarrow\mathbf{I}$ is continuous.  Note that by Lemma \ref{JFV} the space $\beta Y$ is obtained from $\beta X$ by contracting $\tau^{-1}_Y(p)$ to $p$ with $\tau_Y:\beta X\rightarrow \beta Y$ as the quotient mapping. Then for each $n\in \mathbf{N}$ the set $\tau_Y(f^{-1}([0,1/n)))$ is an open neighborhood of $p$ in $\beta Y$. We show that the collection
\[\big\{Y\cap\tau_Y\big(f^{-1}\big([0,1/n)\big)\big):n\in \mathbf{N}\big\}\]
of open neighborhoods of $p$ in $Y$ constitute an open base at $p$ in $Y$.  Let $V$ be an open neighborhood of $p$ in $Y$.
Let $V'$ be an open subset of $\beta Y$ such that $Y\cap V'=V$. Then $p\in V'$ and thus
\[\bigcap_{n=1}^\infty f^{-1}\big([0,1/n]\big)=Z(f)=\tau_Y^{-1}(p)\subseteq\tau_Y^{-1}(V').\]
By compactness we have $f^{-1}([0,1/k])\subseteq\tau_Y^{-1}(V')$ for some  $k\in \mathbf{N}$. Therefore
\[Y\cap\tau_Y\big(f^{-1}\big([0,1/k)\big)\big)\subseteq Y\cap\tau_Y\big(f^{-1}\big([0,1/k]\big)\big)\subseteq Y\cap\tau_Y\big(\tau_Y^{-1}(V')\big)\subseteq Y\cap V'=V.\]
\end{proof}

\begin{lemma}\label{FSYH}
Let $X$ be a locally compact  space. For $Y\in{\mathscr E}(X)$ the following are equivalent:
\begin{itemize}
\item[\rm(1)] $Y\in{\mathscr E}^K(X)$.
\item[\rm(2)] $\Theta_X(Y)$ is open in $\beta X\backslash X$.
\end{itemize}
\end{lemma}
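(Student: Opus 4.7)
The plan is to derive the equivalence as a chain, using the quotient description of $\beta Y$ from Lemma \ref{JFV} together with the standard fact that a Tychonoff space is locally compact iff it is open in its Stone-\v{C}ech compactification.

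First I would rewrite condition (1). Since $Y$ is Tychonoff, $Y\in{\mathscr E}^K(X)$ is equivalent to $Y$ being open in $\beta Y$, i.e.\ to $\beta Y\backslash Y$ being closed in $\beta Y$. By Lemma \ref{JFV}, $\tau_Y:\beta X\to\beta Y$ is a (closed) quotient mapping which identifies $\Theta_X(Y)=\tau_Y^{-1}(p)$ to the single point $p$, where $Y=X\cup\{p\}$. The defining property of the quotient topology then gives that $\beta Y\backslash Y$ is closed in $\beta Y$ if and only if $\tau_Y^{-1}(\beta Y\backslash Y)$ is closed in $\beta X$.

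Next I would compute this preimage. Since $\tau_Y|X=\mbox{id}_X$ and $\tau_Y^{-1}(p)=\Theta_X(Y)$, we have $\tau_Y^{-1}(Y)=X\cup\Theta_X(Y)$, so
\[\tau_Y^{-1}(\beta Y\backslash Y)=\beta X\backslash\bigl(X\cup\Theta_X(Y)\bigr)=(\beta X\backslash X)\backslash\Theta_X(Y).\]
Here is where the hypothesis that $X$ is locally compact enters: it forces $X$ to be open in $\beta X$, so $\beta X\backslash X$ is closed in $\beta X$. Consequently any subset of $\beta X\backslash X$ is closed in $\beta X$ exactly when it is closed in the subspace $\beta X\backslash X$. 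Applying this to the set $(\beta X\backslash X)\backslash\Theta_X(Y)$ converts ``closed in $\beta X$'' into ``closed in $\beta X\backslash X$'', which is the same as $\Theta_X(Y)$ being open in $\beta X\backslash X$.

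Stringing these equivalences together yields (1)$\Leftrightarrow$(2). There is no real obstacle; the only point requiring care is the bookkeeping in the preimage computation and the observation that local compactness of $X$ is precisely what is needed to pass between ``closed in $\beta X$'' and ``closed in $\beta X\backslash X$'' for subsets of the outgrowth.
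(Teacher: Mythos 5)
Your proof is correct and takes essentially the same route as the paper's: both rest on the quotient description of $\beta Y$ from Lemma \ref{JFV}, the characterization of local compactness of a Tychonoff space as openness in its Stone--\v{C}ech compactification, and the fact that local compactness of $X$ makes $X$ open (hence $\beta X\backslash X$ closed) in $\beta X$. The only cosmetic difference is that you package the two implications into a single chain of equivalences phrased in terms of closed sets and preimages under $\tau_Y$, whereas the paper argues the two directions separately using open sets.
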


\begin{proof}
Let $Y=X\cup\{p\}$. (1) {\em  implies} (2). Since $Y$ is locally compact, $Y$ is  open in $\beta Y$. Note that by Lemma \ref{JFV} the space $\beta Y$ is the quotient space of $\beta X$ which is obtained by contracting $\tau^{-1}_Y(p)$ to $p$ and $\tau_Y$ is its quotient mapping. Then the set $\tau^{-1}_Y(p)=\tau^{-1}_Y(Y)\cap (\beta X\backslash X)$ is open in $\beta X\backslash X$.

(2) {\em  implies} (1).  Let $\tau^{-1}_Y(p)=U\cap (\beta X\backslash X)$, for some open subset $U$ of $\beta X$. Note that since $X$ is locally compact, $X$ is open in $\beta X$. Then, again using Lemma \ref{JFV}, it follows that $Y=\tau_Y(U\cup X)$ is open in $\beta Y$, from which the local compactness of $Y$ follows.
\end{proof}

\begin{lemma}\label{HDJYYH}
Let $X$ be a locally compact  space. For $Y\in{\mathscr E}(X)$ the following are equivalent:
\begin{itemize}
\item[\rm(1)] $Y\in{\mathscr E}^C(X)$.
\item[\rm(2)] $\Theta_X(Y)\in {\mathscr Z}(\beta X\backslash X)$.
\end{itemize}
\end{lemma}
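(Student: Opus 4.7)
The plan is to show that both (1) and (2) are equivalent to the single intermediate statement that $\{p\}$ is a zero-set of the compact Hausdorff space $\beta Y\setminus X$, where $Y=X\cup\{p\}$, and then to use the quotient map $\tau_Y$ to match this statement against $\Theta_X(Y)=\tau_Y^{-1}(p)\in{\mathscr Z}(\beta X\setminus X)$.

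For the reduction of (1): since $X$ is locally compact Hausdorff and dense in the Hausdorff space $\beta Y$, $X$ is open in $\beta Y$, so $\beta Y\setminus X$ is closed and therefore compact. From the decomposition $\beta Y\setminus Y=(\beta Y\setminus X)\setminus\{p\}$, together with the observation that $F_\sigma$-subsets of the closed set $\beta Y\setminus X$ coincide with the $F_\sigma$-subsets of $\beta Y$ lying inside it, I get that $Y$ is a $G_\delta$ in $\beta Y$ if and only if $\{p\}$ is a $G_\delta$ in $\beta Y\setminus X$. Since $\beta Y\setminus X$ is compact Hausdorff, hence normal, every closed $G_\delta$ there is a zero-set (and conversely). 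Thus (1) is equivalent to $\{p\}\in{\mathscr Z}(\beta Y\setminus X)$.

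For the link with (2): by Lemma \ref{JFV}, $\tau_Y$ is the quotient of $\beta X$ contracting $C:=\tau_Y^{-1}(p)$ to $p$, and by Theorem 3.5.7 of \cite{E} we have $\tau_Y(\beta X\setminus X)=\beta Y\setminus X$ with $\tau_Y|X=\mbox{id}_X$. Consequently the restriction $\tau_Y|(\beta X\setminus X)\colon\beta X\setminus X\to\beta Y\setminus X$ is a continuous surjection between compact Hausdorff spaces, hence itself a (closed) quotient map, bijective off $C$ and collapsing $C$ onto $p$. If $\{p\}=Z(g)$ for some continuous $g\colon\beta Y\setminus X\to\mathbf{I}$, then $g\tau_Y|(\beta X\setminus X)$ is a continuous function on $\beta X\setminus X$ with zero-set exactly $C$. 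Conversely, any continuous $h\colon\beta X\setminus X\to\mathbf{I}$ with $Z(h)=C$ is constant on the fibers of this quotient map, so by the universal property of quotients it factors through a continuous $\tilde h\colon\beta Y\setminus X\to\mathbf{I}$ with $Z(\tilde h)=\{p\}$. This gives the equivalence of $\{p\}\in{\mathscr Z}(\beta Y\setminus X)$ with (2).

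The two points needing the most care are the identification of $\beta Y\setminus X$ with the quotient of $\beta X\setminus X$ collapsing $C$ (which rests on the general principle that a continuous surjection from a compact space onto a Hausdorff space is closed, hence a quotient map) and the descent of a zero-set-defining function across this quotient via its universal property; the rest of the argument amounts to routine bookkeeping with closed and $G_\delta$ subsets of normal spaces.
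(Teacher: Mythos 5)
Your proof is correct. The paper argues each implication directly: for (1)$\Rightarrow$(2) it writes $\beta X\backslash X=\tau_Y^{-1}(p)\cup\bigcup_{n}K_n$ with each $K_n$ closed in $\beta Y$, chooses Urysohn functions $f_n$ on $\beta X$ separating $\tau_Y^{-1}(p)$ from $K_n$, and takes $f=\sum_{n}f_n/2^n$; for (2)$\Rightarrow$(1) it writes $\beta Y\backslash Y=\bigcup_n g^{-1}([1/n,1])$ and observes that each piece is compact, hence closed in $\beta Y$. You instead pivot through the single intermediate statement that $\{p\}$ is a zero-set of the compact space $\beta Y\backslash X$: the equivalence with (1) comes from the $F_\sigma$ bookkeeping inside the closed set $\beta Y\backslash X$ together with the standard fact that in a normal space the closed $G_\delta$-sets are exactly the zero-sets, and the equivalence with (2) comes from transporting zero-set-defining functions along the closed quotient map $\tau_Y|(\beta X\backslash X)$ via its universal property. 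The ingredients are the same in both arguments (Lemma \ref{JFV}, the $G_\delta$ characterization of \v{C}ech-completeness, normality of compact Hausdorff spaces), but your packaging replaces the explicit summation $\sum f_n/2^n$ by a citation of the normal-space lemma, makes the two directions visibly symmetric, and isolates the one genuinely geometric step (that $\tau_Y$ restricts to a quotient map of the outgrowths, which you justify correctly by compactness). The paper's version is more self-contained at the cost of redoing that construction by hand. Both are complete proofs; no gaps.
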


\begin{proof}
Let $Y=X\cup\{p\}$. (1) {\em  implies} (2).  The set $\beta Y\backslash Y$ is an
$F_\sigma$ in $\beta Y$. Let $\beta Y\backslash Y=\bigcup_{n=1}^\infty K_n$ where each $K_n$ for $n\in\mathbf{N}$ is closed in $\beta Y$. Note that by Lemma \ref{JFV} the space $\beta Y$ is the quotient space of $\beta X$ which is obtained by contracting $\tau^{-1}_Y(p)$ to $p$ and $\tau_Y$ is its quotient mapping. Then
\[\beta X\backslash X=\tau^{-1}_Y(p)\cup\bigcup_{n=1}^\infty K_n.\]
For each  $n\in \mathbf{N}$, let  $f_n:\beta X\rightarrow \mathbf{I}$ be continuous and  such that $f_n|\tau^{-1}_Y(p)\equiv 0$ and $f_n|K_n\equiv 1$. Let $f=\sum_{n=1}^\infty f_n/2^n$. Then $f:\beta X\rightarrow \mathbf{I}$ is continuous and
\[\tau^{-1}_Y(p)=Z(f)\cap(\beta X\backslash X)\in{\mathscr Z}(\beta X\backslash X).\]

(2) {\em  implies} (1).  Let  $\tau^{-1}_Y(p)=Z(g)$ where  $g:\beta X\backslash X\rightarrow \mathbf{I}$ is continuous. Then, again using  Lemma \ref{JFV}, we have
\[\beta Y\backslash Y=(\beta X\backslash X)\backslash\tau^{-1}_Y(p)=(\beta X\backslash X)\backslash Z(g)=g^{-1}\big((0,1]\big)=\bigcup_{n=1}^\infty g^{-1}\big([1/n,1]\big)\]
and  each  set $g^{-1}([1/n,1])$, for $n\in \mathbf{N}$, is compact (as it is closed in $\beta X\backslash X$ and the latter is compact, as $X$  is locally compact) and thus is closed in $\beta Y$.  Therefore, $\beta Y\backslash Y$ is an $F_\sigma$ in $\beta Y$, i.e., $Y$ is \v{C}ech-complete.
\end{proof}

In what follows, the following subset $\lambda_{{\mathcal P}} X$ of $\beta X$ plays a crucial
role. As we will see (Lemma \ref{JIUH}), $\lambda_{{\mathcal P}} X$ takes on a more familiar
form in the case when ${\mathcal P}$ is pseudocompactness.

\begin{definition}\label{HGA}
{\em For a  Tychonoff
space $X$ and a topological property  ${\mathcal P}$, let
\[\lambda_{{\mathcal P}} X=\bigcup\big\{\mbox{int}_{\beta X} \mbox{cl}_{\beta X}C:C\in Coz(X)\mbox{ and } \mbox{cl}_X C \mbox{ has }{\mathcal P}\big\}.\]}
\end{definition}

\begin{remark}
In  \cite{Ko3}, for a Tychonoff space $X$ and a topological property $\mathcal{P}$, we have defined $\lambda_{\mathcal{P}} X$ to be the set
\[\bigcup\big\{\mbox{int}_{\beta X}\mbox{cl}_{\beta X}Z: Z\in {\mathscr Z}(X) \mbox { has $\mathcal{P}$}\big\}.\]
As we will see in Lemma \ref{FJHSG} the two definitions coincide for a closed hereditary topological properties ${\mathcal P}$.
\end{remark}

Recall that a  subset $A$ of a Tychonoff space $X$ is called {\em bounded} (or {\em relatively pseudocompact}), if every continuous
$f:X\rightarrow\mathbf{R}$  is bounded on $A$. The following has been  proved by K. Morita in \cite{M}. It has been  rediscovered by R.L. Blair and M.A. Swardson
in \cite{BS} (see the comment succeeding  Proposition 2.6 of \cite{BS}).

\begin{lemma}[Morita \cite{M}; Blair and Swardson \cite{BS}]\label{B}
Let $X$ be a Tychonoff space. For a subset  $A$ of  $X$ the following are equivalent:
\begin{itemize}
\item[\rm(1)] $A$ is bounded in $X$.
\item[\rm(2)] $\mbox{\em cl}_{\beta X} A\subseteq\upsilon X$.
\item[\rm(3)] $\mbox{\em cl}_{\upsilon X} A$ is compact.
\end{itemize}
\end{lemma}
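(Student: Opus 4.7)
The plan is to establish the cycle $(1)\Rightarrow(2)\Rightarrow(3)\Rightarrow(1)$. The essential tools are the two descriptions of $\upsilon X$: as the intersection of all cozero-sets of $\beta X$ containing $X$ (the definition given in the paper), and as the largest subspace of $\beta X$ over which every continuous $f:X\to\mathbf{R}$ extends continuously (the standard universal property of the Hewitt realcompactification, which I would cite from \cite{GJ} or \cite{PW}).

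For $(1)\Rightarrow(2)$, I would fix a point $p\in\mathrm{cl}_{\beta X}A$ and show $p\in\upsilon X$. If not, there is a cozero-set $U$ of $\beta X$ with $X\subseteq U$ but $p\notin U$. Write $\beta X\setminus U=Z(g)$ for some continuous $g:\beta X\to\mathbf{I}$. Then $g>0$ on $X$ while $g(p)=0$, so $h=1/(g|X)$ is a well-defined continuous real-valued function on $X$. For every $\epsilon>0$ the set $g^{-1}([0,\epsilon))$ is an open neighborhood of $p$ in $\beta X$ and thus meets $A$; this yields points $a\in A$ with $h(a)>1/\epsilon$, contradicting the boundedness of $h$ on $A$. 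For $(2)\Rightarrow(3)$, hypothesis (2) gives $\mathrm{cl}_{\upsilon X}A=\upsilon X\cap\mathrm{cl}_{\beta X}A=\mathrm{cl}_{\beta X}A$, which is closed in the compact space $\beta X$ and hence compact. For $(3)\Rightarrow(1)$, given any continuous $f:X\to\mathbf{R}$, let $f^\upsilon:\upsilon X\to\mathbf{R}$ be its continuous extension; since $\mathrm{cl}_{\upsilon X}A$ is compact and $f^\upsilon$ is continuous on it, $f^\upsilon$ is bounded there, and a fortiori $f$ is bounded on $A$.

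The main conceptual obstacle is the invocation of the universal property of $\upsilon X$ used in $(3)\Rightarrow(1)$: the paper defines $\upsilon X$ only as a specific subspace of $\beta X$ and does not itself establish that every continuous real-valued function on $X$ extends there. I would treat this as a standard fact, quoted from the references. Apart from that invocation, all three implications reduce to short and direct arguments with zero-sets, cozero-sets, the definition of boundedness, and the compactness of closed subsets of $\beta X$.
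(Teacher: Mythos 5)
Your argument is correct. Note, however, that the paper itself offers no proof of Lemma \ref{B}: it is quoted as a known result of Morita \cite{M}, rediscovered by Blair and Swardson \cite{BS}, so there is no internal argument to compare yours against. Your cycle $(1)\Rightarrow(2)\Rightarrow(3)\Rightarrow(1)$ is the standard one and each step checks out: in $(1)\Rightarrow(2)$ the function $h=1/(g|X)$ is indeed continuous and real-valued because $X\subseteq U=\beta X\setminus Z(g)$ forces $g>0$ on $X$, and the points of $A$ in $g^{-1}([0,1/n))$ witness unboundedness of $h$ on $A$; $(2)\Rightarrow(3)$ is immediate from $\mbox{cl}_{\upsilon X}A=\upsilon X\cap\mbox{cl}_{\beta X}A$. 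The only external input is the one you flag yourself: that every continuous $f:X\rightarrow\mathbf{R}$ extends continuously over $\upsilon X$ when $\upsilon X$ is defined, as in this paper, as the intersection of the cozero-sets of $\beta X$ containing $X$. That equivalence of the two descriptions of $\upsilon X$ is a genuine theorem (Chapter 8 of \cite{GJ}), not a triviality, so citing it is the right move and leaves no gap; a fully self-contained proof would need to reprove that equivalence, which would be disproportionate for a lemma the paper itself imports from the literature.
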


The following result is due to  A.W. Hager and D.G. Johnson in \cite{HJ}. A direct proof may be found in \cite{C} (see also Theorem 11.24 of \cite{W}).

\begin{lemma}[Hager and Johnson \cite{HJ}]\label{A}
Let $X$ be a Tychonoff space. For an open  subset  $U$ of $X$, if  $\mbox{\em cl}_{\upsilon X} U$ is compact, then  $\mbox{\em cl}_X U$ is pseudocompact.
\end{lemma}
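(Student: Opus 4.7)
The plan is to argue by contradiction, reducing a failure of pseudocompactness of $\mbox{cl}_X U$ to the existence of a locally finite family of non-empty open subsets of $X$ sitting inside $U$, and then building from these a continuous real-valued function on $X$ that is unbounded on $U$. This will contradict the fact that $U$ is bounded in $X$, which we obtain from the hypothesis via Lemma \ref{B}: since $\mbox{cl}_{\upsilon X}U$ is compact, $U$ is bounded in $X$, and therefore every continuous $f:X\to\mathbf{R}$ is bounded on $U$.

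First I would invoke the standard characterization that a Tychonoff space $T$ fails to be pseudocompact if and only if it carries an infinite locally finite family of non-empty open subsets. Assuming $\mbox{cl}_X U$ is not pseudocompact, pick such a family $\{W_n:n\in\mathbf{N}\}$ in $\mbox{cl}_X U$. Write $W_n=V_n\cap\mbox{cl}_X U$ with $V_n$ open in $X$, and set $O_n=V_n\cap U$. Each $O_n$ is open in $X$, contained in $U$, and non-empty because $U$ is dense in $\mbox{cl}_X U$ and $W_n$ is a non-empty open subset of $\mbox{cl}_X U$. The family $\{O_n:n\in\mathbf{N}\}$ is locally finite in $X$: at any $x\in X\setminus\mbox{cl}_X U$ the open set $X\setminus\mbox{cl}_X U$ is a neighborhood meeting no $O_n$; at any $x\in\mbox{cl}_X U$ choose, by local finiteness of $\{W_n\}$ in $\mbox{cl}_X U$, an open $N$ in $X$ with $N\cap\mbox{cl}_X U$ meeting only finitely many $W_n$, and note that $O_n\subseteq W_n$ forces $N$ to meet only the same $O_n$'s.

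Next, using Tychonoff-ness, for each $n\in\mathbf{N}$ pick $x_n\in O_n$ and a continuous $f_n:X\to\mathbf{I}$ with $f_n(x_n)=1$ and $f_n\equiv 0$ on $X\setminus O_n$. Define
\[
f=\sum_{n=1}^{\infty} n\, f_n.
\]
Local finiteness of $\{O_n\}$ makes this a locally finite sum of continuous functions, hence continuous on $X$, and since the summands are non-negative we have $f(x_n)\geq n$. Because each $x_n$ lies in $U$, $f$ is unbounded on $U$, contradicting the boundedness of $U$ in $X$. Therefore $\mbox{cl}_X U$ is pseudocompact.

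The only potentially delicate step is verifying the local finiteness of $\{O_n\}$ in $X$ (since the original family is only locally finite in $\mbox{cl}_X U$), but using that $U$ is open and that $O_n\subseteq W_n$ the argument above goes through cleanly; the rest is the standard Urysohn construction together with a direct appeal to Lemma \ref{B}.
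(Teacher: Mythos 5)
Your proof is correct. Note first that the paper itself gives no argument for this lemma: it is quoted from Hager--Johnson, with pointers to \cite{C} and to Theorem 11.24 of \cite{W} for a direct proof, so there is no internal proof to compare against. Your argument is a valid self-contained derivation. The reduction via Lemma \ref{B} from compactness of $\mbox{cl}_{\upsilon X}U$ to boundedness of $U$ is exactly the right first step, and the rest correctly isolates where the openness of $U$ is needed: a locally finite family $\{W_n\}$ of non-empty open subsets of $\mbox{cl}_X U$ (which exists by the standard Tychonoff characterization of non-pseudocompactness, Theorem 3.10.22 of \cite{E}) can be shrunk to non-empty $X$-open sets $O_n=V_n\cap U\subseteq W_n$, and the resulting family is locally finite in all of $X$ because $X\backslash\mbox{cl}_X U$ is a neighborhood of every point outside $\mbox{cl}_X U$ missing every $O_n$, while at points of $\mbox{cl}_X U$ the trace argument you give works since $O_n\subseteq W_n\cap\mbox{cl}_X U$. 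The locally finite sum $f=\sum n f_n$ is then continuous on $X$ and unbounded on $U$, contradicting boundedness. (For general bounded subsets $A$ the conclusion fails, so this use of openness is essential and you have placed it correctly.) The one point worth making explicit if you write this up is the citation for the characterization of pseudocompactness by locally finite open families; everything else is routine and complete.
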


If $A$ is a dense subset of a space $X$ and $U$ is an open subset of $X$ then $\mbox{cl}_XU=\mbox{cl}_X(U\cap A)$, and thus, in particular, $U\subseteq\mbox{int}_X\mbox{cl}_X(U\cap A)$. The following simple observation will have numerous applications in future. We record it here for the sake of completeness.

\begin{lemma}\label{HYFU}
Let $X$ be a Tychonoff space, let $F:\beta X\rightarrow\mathbf{I}$ be continuous and let $r\in (0,1)$. Denote $f=F|X$. Then
\[F^{-1}\big([0,r)\big)\subseteq \mbox{\em int}_{\beta X}\mbox{\em cl}_{\beta X}\big(X\cap F^{-1}\big([0,r)\big)\big)=\mbox{\em int}_{\beta X}\mbox{\em cl}_{\beta X}f^{-1}\big([0,r)\big).\]
\end{lemma}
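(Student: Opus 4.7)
The plan is to read the statement as two separate claims: an inclusion
\[F^{-1}\big([0,r)\big)\subseteq \mbox{int}_{\beta X}\mbox{cl}_{\beta X}\big(X\cap F^{-1}\big([0,r)\big)\big)\]
and an equality of the two interior-of-closure expressions on the right. Both should fall out at once from the simple density observation stated in the paragraph just before the lemma, namely that for a dense subspace $A$ of a space $Y$ and an open $U\subseteq Y$ one has $\mbox{cl}_Y U=\mbox{cl}_Y(U\cap A)$, and therefore $U\subseteq\mbox{int}_Y\mbox{cl}_Y(U\cap A)$.

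First I would note that since $F$ is continuous into $\mathbf{I}$, the set $U=F^{-1}([0,r))$ is open in $\beta X$. Then I would invoke the density observation with $Y=\beta X$ and $A=X$ (dense since $\beta X$ is a compactification of $X$) to conclude immediately that
\[F^{-1}\big([0,r)\big)=U\subseteq \mbox{int}_{\beta X}\mbox{cl}_{\beta X}(U\cap X)=\mbox{int}_{\beta X}\mbox{cl}_{\beta X}\big(X\cap F^{-1}\big([0,r)\big)\big),\]
which handles the inclusion.

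Finally, for the asserted equality, I would just observe that by the hypothesis $f=F|X$, we tautologically have
\[X\cap F^{-1}\big([0,r)\big)=f^{-1}\big([0,r)\big),\]
so the two closures (and hence their interiors) coincide. There is no real obstacle here; the lemma is essentially a restatement of the preceding density remark specialised to the open set $F^{-1}([0,r))$, and the bulk of the work is simply unpacking the definition of $f$ as the restriction of $F$.
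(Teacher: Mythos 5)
Your proposal is correct and is exactly how the paper handles this lemma: the paper gives no separate proof, treating the statement as an immediate consequence of the density observation recorded in the paragraph preceding it (applied to the open set $F^{-1}([0,r))$ and the dense subset $X$ of $\beta X$), together with the tautology $X\cap F^{-1}([0,r))=f^{-1}([0,r))$. Nothing further is needed.
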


\begin{lemma}\label{JIUH}
Let $X$ be a Tychonoff  space and let ${\mathcal P}$ be pseudocompactness. Then
\[\lambda_{{\mathcal P}} X=\mbox{\em int}_{\beta X}\upsilon X.\]
\end{lemma}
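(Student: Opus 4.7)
The plan is to prove the two inclusions separately, using the Morita--Blair--Swardson lemma and the Hager--Johnson lemma as the main tools, together with the ``int-of-cl'' observation in Lemma \ref{HYFU}.

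For the inclusion $\lambda_{\mathcal{P}} X \subseteq \mbox{int}_{\beta X}\upsilon X$, I take any $C \in Coz(X)$ such that $\mbox{cl}_X C$ is pseudocompact. Since every continuous $\mathbf{R}$-valued function on $X$ is bounded on $\mbox{cl}_X C$, it is a fortiori bounded on $C$, so $C$ is bounded in $X$. Lemma \ref{B} then gives $\mbox{cl}_{\beta X} C \subseteq \upsilon X$. Hence $\mbox{int}_{\beta X}\mbox{cl}_{\beta X} C$ is an open subset of $\beta X$ contained in $\upsilon X$, so it lies in $\mbox{int}_{\beta X}\upsilon X$. Taking the union over all admissible $C$ gives the inclusion.

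For the reverse inclusion, I start with $p \in \mbox{int}_{\beta X}\upsilon X$ and must produce a cozero-set $C$ of $X$ with $\mbox{cl}_X C$ pseudocompact and $p \in \mbox{int}_{\beta X}\mbox{cl}_{\beta X} C$. Pick an open neighborhood $V$ of $p$ in $\beta X$ with $\mbox{cl}_{\beta X} V \subseteq \mbox{int}_{\beta X}\upsilon X$, and by complete regularity of $\beta X$ choose a continuous $F:\beta X \to \mathbf{I}$ with $F(p)=0$ and $F\equiv 1$ on $\beta X\setminus V$. Fix $r\in(0,1)$ and set $C = X \cap F^{-1}([0,r))$, which is a cozero-set of $X$. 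By Lemma \ref{HYFU}, $p \in F^{-1}([0,r)) \subseteq \mbox{int}_{\beta X}\mbox{cl}_{\beta X} C$. Moreover $\mbox{cl}_{\beta X} C \subseteq F^{-1}([0,r]) \subseteq V \subseteq \upsilon X$, so by Lemma \ref{B} the set $C$ is bounded in $X$, i.e.\ $\mbox{cl}_{\upsilon X} C$ is compact.

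The remaining step is to upgrade boundedness of $C$ to pseudocompactness of $\mbox{cl}_X C$. This is exactly where Lemma \ref{A} applies: $C$ is open in $X$ (being a cozero-set) and $\mbox{cl}_{\upsilon X} C$ is compact, so $\mbox{cl}_X C$ is pseudocompact. This $C$ therefore witnesses $p\in\lambda_{\mathcal{P}} X$, completing the proof.

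The only point that requires any thought is the forward inclusion's passage from ``neighborhood inside $\upsilon X$'' to ``cozero-set with pseudocompact closure in $X$'', and this is precisely what the combination of Lemmas \ref{HYFU}, \ref{B}, and \ref{A} is designed to handle. There is no real obstacle; one just needs to be careful to choose $r < 1$ so that the closed sublevel set $F^{-1}([0,r])$ stays inside $V$ (and hence inside $\upsilon X$), which is what makes Morita's lemma applicable to $C$.
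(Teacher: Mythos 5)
Your proof is correct and follows essentially the same route as the paper: the forward inclusion via Lemma \ref{B} applied to a cozero-set with pseudocompact closure, and the reverse inclusion by separating a point of $\mbox{int}_{\beta X}\upsilon X$ from its complement with a continuous function, cutting off at a sublevel set, and combining Lemmas \ref{HYFU}, \ref{B} and \ref{A}. The only cosmetic difference is that the paper works directly with the open set $\mbox{int}_{\beta X}\upsilon X$ and the fixed level $r=1/2$ rather than introducing an intermediate neighborhood $V$.
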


\begin{proof}
Let $C\in Coz(X)$ be such that $\mbox{cl}_X C$ is pseudocompact. Then $\mbox{cl}_X C$ is bounded, and thus by Lemma \ref{B} we have
$\mbox{cl}_{\beta X} C\subseteq\upsilon X$. Therefore  $\mbox{int}_{\beta X}\mbox{cl}_{\beta X} C\subseteq\mbox{int}_{\beta X}\upsilon X$. This shows that $\lambda_{{\mathcal P}} X\subseteq\mbox{int}_{\beta X}\upsilon X$.

To show the reverse inclusion, let $t\in \mbox{int}_{\beta X}\upsilon X$. Let $f:\beta X\rightarrow\mathbf{I}$ be  continuous and such that $f(t)=0$
and $f|(\beta X\backslash \mbox{int}_{\beta X}\upsilon X)\equiv 1$. Let
\[D=X\cap f^{-1}\big([0,1/2)\big)\in Coz(X).\]
Note that
\[\mbox{cl}_{\beta X} D=\mbox{cl}_{\beta X} \big(X\cap f^{-1}\big([0,1/2)\big)\big)\subseteq f^{-1}\big([0,1/2]\big)\subseteq\mbox{int}_{\beta X}\upsilon X\subseteq\upsilon X.\]
Therefore, $\mbox{cl}_{\upsilon X} D=\mbox{cl}_{\beta X} D\cap\upsilon X=\mbox{cl}_{\beta X} D$ is compact, and thus by Lemma \ref{A} the space   $\mbox{cl}_X D$ is pseudocompact. By definition of
$\lambda_{{\mathcal P}} X$ and using Lemma \ref{HYFU}, we have
\[t\in f^{-1}\big([0,1/2)\big)\subseteq\mbox{int}_{\beta X}\mbox{cl}_{\beta X}\big(X\cap f^{-1}\big([0,1/2)\big)\big)=\mbox{int}_{\beta X}\mbox{cl}_{\beta X} D\subseteq\lambda_{{\mathcal P}} X.\]
This shows that $\mbox{int}_{\beta X}\upsilon X\subseteq\lambda_{{\mathcal P}} X$.
\end{proof}

The following is a slight modification of a lemma from \cite{Ko3}.

\begin{lemma}\label{HGKFH}
Let $X$ be a Tychonoff  space and let ${\mathcal P}$ be a closed hereditary topological property which is preserved under  finite closed sums of subspaces. For a subset  $A$ of $X$, if $\mbox{\em cl}_{\beta X}A\subseteq\lambda_{{\mathcal P}} X$, then $\mbox{\em cl}_X A$ has ${\mathcal P}$.
\end{lemma}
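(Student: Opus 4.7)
The plan is to reduce the hypothesis to a finite cover by compactness, and then to express $\mathrm{cl}_X A$ as a finite union of closed subsets each inheriting $\mathcal{P}$ from a member of the family defining $\lambda_{\mathcal{P}} X$.

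More precisely, I would first note that $\mathrm{cl}_{\beta X} A$ is closed in the compact space $\beta X$ and hence compact. By the very definition of $\lambda_{\mathcal{P}} X$, the family
\[\big\{\mathrm{int}_{\beta X}\mathrm{cl}_{\beta X}C : C\in Coz(X), \ \mathrm{cl}_X C\text{ has }\mathcal{P}\big\}\]
is an open cover (in $\beta X$) of $\mathrm{cl}_{\beta X} A$, so I can extract finitely many cozero-sets $C_1,\dots,C_n$ of $X$, each with $\mathrm{cl}_X C_i$ having $\mathcal{P}$, such that
\[\mathrm{cl}_{\beta X} A\subseteq\bigcup_{i=1}^{n}\mathrm{int}_{\beta X}\mathrm{cl}_{\beta X}C_i\subseteq\bigcup_{i=1}^{n}\mathrm{cl}_{\beta X}C_i.\]

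Next I would intersect with $X$. Since $\mathrm{cl}_X A=X\cap\mathrm{cl}_{\beta X}A$ and $\mathrm{cl}_X C_i=X\cap\mathrm{cl}_{\beta X}C_i$, this yields $\mathrm{cl}_X A\subseteq\bigcup_{i=1}^{n}\mathrm{cl}_X C_i$, and therefore
\[\mathrm{cl}_X A=\bigcup_{i=1}^{n}\bigl(\mathrm{cl}_X A\cap \mathrm{cl}_X C_i\bigr).\]
Each set $\mathrm{cl}_X A\cap\mathrm{cl}_X C_i$ is a closed subset of $\mathrm{cl}_X C_i$, and since $\mathrm{cl}_X C_i$ has $\mathcal{P}$ and $\mathcal{P}$ is hereditary with respect to closed subsets, each of these intersections has $\mathcal{P}$. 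They are also closed in $\mathrm{cl}_X A$, so the finite decomposition above exhibits $\mathrm{cl}_X A$ as a finite closed sum of subspaces having $\mathcal{P}$. Invoking the hypothesis that $\mathcal{P}$ is preserved under finite closed sums then concludes that $\mathrm{cl}_X A$ has $\mathcal{P}$.

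There is no real obstacle here; the only point that needs a moment's care is that the open cover we extract from the definition of $\lambda_{\mathcal{P}} X$ consists of $\mathrm{int}_{\beta X}\mathrm{cl}_{\beta X}C_i$, but passing to the larger closed sets $\mathrm{cl}_{\beta X}C_i$ (before intersecting with $X$) is harmless and is what allows the clean identification $X\cap\mathrm{cl}_{\beta X}C_i=\mathrm{cl}_X C_i$ to feed directly into the closed-hereditary and finite-closed-sum hypotheses on $\mathcal{P}$.
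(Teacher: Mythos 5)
Your proposal is correct and follows essentially the same route as the paper: compactness of $\mathrm{cl}_{\beta X}A$ yields a finite subfamily $C_1,\dots,C_n$, and then the closed-hereditary and finite-closed-sum hypotheses finish the job. The only (immaterial) difference is that the paper applies the finite-sum property to $\bigcup_{i=1}^n\mathrm{cl}_X C_i$ and then passes to the closed subset $\mathrm{cl}_X A$, whereas you decompose $\mathrm{cl}_X A$ directly as $\bigcup_{i=1}^n(\mathrm{cl}_X A\cap\mathrm{cl}_X C_i)$; both orderings of the two hypotheses are valid.
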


\begin{proof}
By compactness and definition of
$\lambda_{{\mathcal P}} X$, we have
\[\mbox{cl}_{\beta X}A\subseteq\bigcup_{i=1}^n\mbox{int}_{\beta X}\mbox{cl}_{\beta X}C_i\]
for some $C_1,\ldots,C_n\in Coz(X)$, where $\mbox{cl}_X C_i$ has  ${\mathcal P}$ for $i=1,\ldots,n$.
Now $\mbox{cl}_X A\subseteq\bigcup_{i=1}^n\mbox{cl}_X C_i$, and the latter has ${\mathcal P}$, as it is a finite union of its closed  ${\mathcal P}$-subspaces, thus its closed subset $\mbox{cl}_X A$ also has ${\mathcal P}$.
\end{proof}

The following  follows from Theorem 3.2 and Corollary 3.3 of \cite{Ha}.

\begin{lemma}[Harris \cite{Ha}]\label{HGJHG}
Let $X$ be a Tychonoff space. Then $X\subseteq\mbox{\em int}_{\beta X}\upsilon X$ if and only if $X$ is locally pseudocompact.
\end{lemma}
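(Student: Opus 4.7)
The plan is to prove the two implications separately, using the characterization of $\mathrm{int}_{\beta X}\upsilon X$ via Lemmas \ref{B} and \ref{A}, and standard regularity of $\beta X$.

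For the forward direction, suppose $X \subseteq \mathrm{int}_{\beta X}\upsilon X$ and fix $x \in X$. Since $\beta X$ is compact Hausdorff, hence regular, I would first choose an open $W$ in $\beta X$ with
\[x\in W\subseteq\mathrm{cl}_{\beta X}W\subseteq\mathrm{int}_{\beta X}\upsilon X\subseteq\upsilon X.\]
Set $U=X\cap W$, an open neighborhood of $x$ in $X$. Then
\[\mathrm{cl}_{\upsilon X}U=\mathrm{cl}_{\beta X}U\cap\upsilon X\subseteq\mathrm{cl}_{\beta X}W,\]
which is compact. Lemma \ref{A} (Hager--Johnson) then yields that $\mathrm{cl}_X U$ is pseudocompact, establishing local pseudocompactness at $x$.

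For the reverse direction, assume $X$ is locally pseudocompact and fix $x\in X$. Choose an open neighborhood $U$ of $x$ in $X$ with $\mathrm{cl}_X U$ pseudocompact. Then $U\subseteq\mathrm{cl}_X U$, and every continuous $f:X\to\mathbf{R}$ restricts to a bounded function on the pseudocompact space $\mathrm{cl}_X U$; hence $U$ is bounded in $X$. By Lemma \ref{B} (Morita; Blair--Swardson), $\mathrm{cl}_{\beta X}U\subseteq\upsilon X$. Next, pick an open $V$ in $\beta X$ with $X\cap V=U$; since $X$ is dense in $\beta X$,
\[V\subseteq\mathrm{cl}_{\beta X}V=\mathrm{cl}_{\beta X}(X\cap V)=\mathrm{cl}_{\beta X}U\subseteq\upsilon X.\]
Thus $V$ is an open subset of $\beta X$ contained in $\upsilon X$, so $V\subseteq\mathrm{int}_{\beta X}\upsilon X$; since $x\in U\subseteq V$, we conclude $x\in\mathrm{int}_{\beta X}\upsilon X$.

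The only subtle point is the step $U \subseteq \mathrm{cl}_X U$ pseudocompact $\Rightarrow$ $U$ bounded in $X$, which avoids the need for closed-hereditariness of pseudocompactness by using the definition of boundedness directly (restriction of a global continuous real-valued function to the pseudocompact closed set $\mathrm{cl}_X U$ is bounded, hence bounded on $U$). The rest is a routine manipulation of closures and the dense-subset identity $\mathrm{cl}_{\beta X}V=\mathrm{cl}_{\beta X}(X\cap V)$, together with Lemmas \ref{A} and \ref{B} used as a near-converse pair.
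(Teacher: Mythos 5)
Your proof is correct. Note that the paper does not actually prove this lemma: it simply cites Theorem 3.2 and Corollary 3.3 of Harris's paper \cite{Ha}, so there is no internal proof to compare against line by line. That said, your self-contained derivation is exactly in the spirit of the surrounding text: the forward direction is regularity of $\beta X$ plus Lemma \ref{A} (Hager--Johnson), and the reverse direction is the observation that an open set with pseudocompact closure is bounded, followed by Lemma \ref{B} (Morita; Blair--Swardson) and the dense-set identity $\mbox{cl}_{\beta X}V=\mbox{cl}_{\beta X}(X\cap V)$ recorded just before Lemma \ref{HYFU}. This is essentially the same machinery the author deploys in the proof of Lemma \ref{JIUH} (showing $\lambda_{\mathcal P}X=\mbox{int}_{\beta X}\upsilon X$ for $\mathcal P$ pseudocompactness), so your argument fits the paper's toolkit seamlessly. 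Two tiny points of polish: in the forward direction, to conclude that $\mbox{cl}_{\upsilon X}U$ is compact it is cleanest to note that $\mbox{cl}_{\beta X}U\subseteq\mbox{cl}_{\beta X}W\subseteq\upsilon X$ forces $\mbox{cl}_{\upsilon X}U=\mbox{cl}_{\beta X}U$, which is closed in the compact space $\beta X$ (saying only that it is \emph{contained in} a compact set is not by itself enough); and your handling of the boundedness step is the right way to avoid any appeal to closed-heredity of pseudocompactness, which indeed fails in general.
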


The following is a slight modification of a lemma from \cite{Ko3}.

\begin{lemma}\label{JHG}
Let $X$ be a Tychonoff space and let $\mathcal{P}$ be a closed hereditary topological property.  Then $X\subseteq\lambda_{{\mathcal P}} X$ if
and only if $X$ is locally-$\mathcal{P}$.
\end{lemma}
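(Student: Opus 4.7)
The plan is to prove both implications directly from the definitions, using the machinery supplied by Lemma \ref{HYFU} in the nontrivial direction.

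For the forward direction, suppose $X\subseteq\lambda_{\mathcal{P}} X$ and let $x\in X$. By definition of $\lambda_{\mathcal{P}} X$, there exists $C\in Coz(X)$ with $\mbox{cl}_X C$ having $\mathcal{P}$ and $x\in\mbox{int}_{\beta X}\mbox{cl}_{\beta X} C$. I would then set $U=X\cap\mbox{int}_{\beta X}\mbox{cl}_{\beta X} C$, which is an open neighborhood of $x$ in $X$. Since $U\subseteq\mbox{cl}_{\beta X}C$, taking closures in $\beta X$ and intersecting with $X$ yields $\mbox{cl}_X U\subseteq X\cap\mbox{cl}_{\beta X} C=\mbox{cl}_X C$. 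Closed heredity of $\mathcal{P}$ then gives that $\mbox{cl}_X U$ has $\mathcal{P}$, showing that $X$ is locally-$\mathcal{P}$.

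For the backward direction, suppose $X$ is locally-$\mathcal{P}$ and let $x\in X$. Choose an open neighborhood $U_x$ of $x$ in $X$ with $\mbox{cl}_X U_x$ having $\mathcal{P}$. Exploiting that $X$ is Tychonoff, I would pick a continuous $f:X\rightarrow\mathbf{I}$ with $f(x)=0$ and $f|(X\backslash U_x)\equiv 1$, and extend it to a continuous $F:\beta X\rightarrow\mathbf{I}$. Setting $D=f^{-1}([0,1/2))$ produces a cozero-set of $X$ containing $x$, and since $f\equiv 1$ off $U_x$ we have $D\subseteq U_x$, so $\mbox{cl}_X D\subseteq\mbox{cl}_X U_x$ has $\mathcal{P}$ by closed heredity. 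Now Lemma \ref{HYFU} applied to $F$ and $r=1/2$ gives
\[x\in F^{-1}\big([0,1/2)\big)\subseteq\mbox{int}_{\beta X}\mbox{cl}_{\beta X}\big(X\cap F^{-1}\big([0,1/2)\big)\big)=\mbox{int}_{\beta X}\mbox{cl}_{\beta X} D,\]
and the latter is contained in $\lambda_{\mathcal{P}} X$ by definition. Hence $X\subseteq\lambda_{\mathcal{P}} X$.

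There is no real obstacle: the argument is a straightforward bookkeeping of closures, with Lemma \ref{HYFU} doing the one piece of nontrivial work by converting membership in an open set of $\beta X$ into membership in the interior of the $\beta X$-closure of a cozero-set of $X$. The only subtlety is ensuring that the cozero-set one chooses in the second direction lies inside the good neighborhood $U_x$, which is why I would build it from the function $f$ that is identically $1$ off $U_x$ rather than a bump function centered at $x$.
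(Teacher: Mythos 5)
Your proposal is correct and follows essentially the same route as the paper's own proof: the nontrivial direction uses a bump function vanishing at $x$ and identically $1$ off the good neighborhood, together with Lemma \ref{HYFU}, while the other direction takes $X\cap\mbox{int}_{\beta X}\mbox{cl}_{\beta X}C$ as the witnessing neighborhood and invokes closed heredity. No gaps.
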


\begin{proof}
Suppose that $X$ is  locally-$\mathcal{P}$. Let $x\in X$, and let $U$ be an open neighborhood of $x$ in $X$ whose closure $\mbox{cl}_XU$ has
$\mathcal{P}$. Let $f:X\rightarrow \mathbf{I}$ be continuous and such that  $f(x)=0$ and $f|(X\backslash U)\equiv 1$. Let
$f_\beta:\beta X\rightarrow \mathbf{I}$ continuously extend $f$. Let $C=f^{-1}([0,1/2))\in Coz(X)$. Note that $C\subseteq U$ and thus $\mbox{cl}_XC$  has $\mathcal{P}$, as it is closed in $\mbox{cl}_XU$. By definition of
$\lambda_{{\mathcal P}} X$ and using Lemma \ref{HYFU} we have
\[x\in f_\beta^{-1}\big([0,1/2)\big)\subseteq\mbox{int}_{\beta X}\mbox{cl}_{\beta X}f^{-1}\big([0,1/2)\big)=\mbox{int}_{\beta X}\mbox{cl}_{\beta X}C\subseteq\lambda_{{\mathcal P}} X.\]
This shows that $X\subseteq\lambda_{{\mathcal P}} X$.

To show the  converse, suppose that  $X\subseteq\lambda_{{\mathcal P}} X$. Let $x\in X$. Then $x\in \lambda_{{\mathcal P}} X$, and thus, by definition of
$\lambda_{{\mathcal P}} X$, there is a $D\in Coz(X)$ such that $x\in \mbox{int}_{\beta X}\mbox{cl}_{\beta X}D$ and $\mbox{cl}_X D$ has $\mathcal{P}$.
Let $V=X\cap \mbox{int}_{\beta X}\mbox{cl}_{\beta X}D$. Then $V$ is an open neighborhood of $x$ in $X$ whose closure $\mbox{cl}_XV$ has $\mathcal{P}$, as
\[\mbox{cl}_XV= X\cap\mbox{cl}_{\beta X}V\subseteq X\cap\mbox{cl}_{\beta X}D=\mbox{cl}_X D.\]
Therefore $X$ is locally-$\mathcal{P}$.
\end{proof}

\begin{definition}\label{HGFB}
{\em A  topological
property  $\mathcal{P}$ is said to {\em satisfy  Mr\'{o}wka's condition} (W) (or, to {\em be a Mr\'{o}wka topological property}), if it satisfies the following: If $X$ is a Tychonoff
space in which there is a point  $p$ with an open  base  ${\mathscr B}$ at $p$ such that $X\backslash  B$ has $\mathcal{P}$ for
each $B\in{\mathscr B}$, then $X$ has $\mathcal{P}$ (see \cite{Mr}).}
\end{definition}

\begin{remark}
If $\mathcal{P}$ is  a topological property which is closed hereditary and productive, then  Mr\'{o}wka's condition (W) is equivalent to
the following condition:   If a Tychonoff  space $X$ is the union of a compact space and a space with $\mathcal{P}$,
then $X$ has $\mathcal{P}$ (see \cite{MRW1}).
\end{remark}

The following example  provides a list of  Mr\'{o}wka topological properties (see \cite{Bu}, \cite{Steph} and \cite{Va} for definitions).

\begin{example}\label{JHL}
Consider the following topological properties. (1) The  Lindel\"{o}f property (2) Paracompactness (3) Metacompactness (4) Subparacompactness
(5) The para-Lindel\"{o}f property (6) The  $\sigma$-para-Lindel\"{o}f  property  (7)  Weak $\theta$-refinability (8)  $\theta$-refinability (or  submetacompactness) (9) Weak $\delta\theta$-refinability (10)  $\delta\theta$-refinability (or the submeta-Lindel\"{o}f property).  Let  ${\mathcal P}=\mbox{regularity}+(i)$ for $i=1,\ldots,10$. Then $\mathcal{P}$ is a Mr\'{o}wka topological property (see \cite{Ko3}).
\end{example}

In addition to the above topological properties, the list of Mr\'{o}wka topological properties includes: countable paracompactness, $[\theta,\kappa]$-compactness,  $\kappa$-boundedness,  screenability,  $\sigma$-metacompactness,  Dieudonn\'{e} completeness, $N$-compactness (\cite{M1}),  realcompactness, almost realcompactness (\cite{F}) and zero-dimensionality (see \cite{Ko3}, \cite{MRW1} and \cite{MRW2} for details).

Part of the next lemma is a simplified version of a lemma we have proved in \cite{Ko3}.

\begin{lemma}\label{JHKFH}
Let $X$ be a Tychonoff  space and let ${\mathcal P}$ be either pseudocompactness, or a closed hereditary Mr\'{o}wka topological property, which is preserved under finite closed sums of subspaces. For $Y\in{\mathscr E}(X)$ the following are equivalent:
\begin{itemize}
\item[\rm(1)] $Y\in{\mathscr E}_{{\mathcal P}}(X)$.
\item[\rm(2)] $X$ is locally-${\mathcal P}$ and $\beta X\backslash\lambda_{{\mathcal P}} X\subseteq\Theta_X(Y)$.
\end{itemize}
Thus, in particular
\[\Theta_X\big({\mathscr E}_{{\mathcal P}}(X)\big)=\{C\subseteq\beta X\backslash X:C \mbox{ is   compact and }\beta X\backslash\lambda_{{\mathcal P}} X\subseteq C\}\backslash\{\emptyset\}.\]
\end{lemma}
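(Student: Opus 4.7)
The plan is to work throughout with $Y = X \cup \{p\}$ and to invoke Lemma \ref{JFV} to identify $\beta Y$ with the quotient of $\beta X$ obtained by collapsing $\tau_Y^{-1}(p) = \Theta_X(Y)$ to the point $p$. This identification lets us translate statements about $Y$ near $p$ into statements about cozero-sets of $X$ whose closures in $\beta X$ avoid $\tau_Y^{-1}(p)$.

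For $(1) \Rightarrow (2)$ both conclusions follow from the same construction. To show $X$ is locally-$\mathcal{P}$, given $x \in X$ I would use the Tychonoff property of $Y$ to pick a continuous $f : Y \to \mathbf{I}$ with $f(x) = 0$ and $f(p) = 1$, set $V = f^{-1}([0,1/2))$, and observe that $V \subseteq X$ and $p \notin \mathrm{cl}_Y V$, so $\mathrm{cl}_X V = \mathrm{cl}_Y V$ is a regular closed subset of $Y$. For the inclusion $\beta X \setminus \lambda_{\mathcal{P}} X \subseteq \tau_Y^{-1}(p)$, given $t \in \beta X \setminus \tau_Y^{-1}(p)$ I would separate $\tau_Y(t)$ from $p$ in $\beta Y$ by a continuous $g$, pull back to $h = g\tau_Y$ on $\beta X$, and take $C = X \cap h^{-1}([0,1/2)) \in Coz(X)$; since $\tau_Y$ is a closed map that is the identity on $X$, pushing $\mathrm{cl}_{\beta X} C$ through $\tau_Y$ shows $p \notin \mathrm{cl}_Y C$, so $\mathrm{cl}_X C = \mathrm{cl}_Y C$ is again regular closed in $Y$. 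In either situation the regular closed subset inherits $\mathcal{P}$: directly by closed hereditariness in the Mr\'{o}wka case, and by the standard theorem that regular closed subsets of pseudocompact Tychonoff spaces are pseudocompact in the pseudocompactness case. Lemma \ref{HYFU} then puts $t$ inside $\mathrm{int}_{\beta X}\mathrm{cl}_{\beta X} C \subseteq \lambda_{\mathcal{P}} X$.

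For $(2) \Rightarrow (1)$ the argument splits on $\mathcal{P}$. In the closed hereditary Mr\'{o}wka case I would verify Mr\'{o}wka's condition (W) at $p$ in $Y$: the sets $B = Y \cap \tau_Y(W)$, as $W$ ranges over open neighborhoods in $\beta X$ of $\tau_Y^{-1}(p)$, form an open base at $p$ in $Y$, and $Y \setminus B$ equals the closed subset $X \setminus W$ of $X$, whose closure in $\beta X$ lies inside $\beta X \setminus \tau_Y^{-1}(p) \subseteq \lambda_{\mathcal{P}} X$ by hypothesis; Lemma \ref{HGKFH} then gives that each $Y \setminus B$ has $\mathcal{P}$, and condition (W) lifts this to $Y$. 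In the pseudocompactness case Lemma \ref{JIUH} rewrites the hypothesis as $\beta X \setminus \tau_Y^{-1}(p) \subseteq \upsilon X$. Given a continuous $f : Y \to \mathbf{R}$, supposing for contradiction that $|f(y_n)| \to \infty$ for some sequence $(y_n)$ in $X$, I would pass to a subnet $y_{n_\alpha}$ converging in $\beta X$ to some $t$. If $t \in \tau_Y^{-1}(p)$, applying $\tau_Y$ yields $y_{n_\alpha} \to p$ in $\beta Y$ and hence in $Y$, so continuity of $f$ forces $f(y_{n_\alpha}) \to f(p)$; otherwise $t \in \upsilon X$, and the canonical continuous extension $\hat{f} : \upsilon X \to \mathbf{R}$ of $f|X$ forces $f(y_{n_\alpha}) \to \hat{f}(t)$; either outcome contradicts unboundedness, so $Y$ is pseudocompact.

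The displayed ``in particular'' formula is then immediate from the equivalence, together with the remark that if $X$ fails to be locally-$\mathcal{P}$ then Lemma \ref{JHG} produces some $x \in X \setminus \lambda_{\mathcal{P}} X$; any candidate $C \subseteq \beta X \setminus X$ on the right-hand side would then have to contain $x \in X$, which is impossible, so both sides are empty. The principal obstacle, and the reason for the bifurcated hypothesis on $\mathcal{P}$, is that pseudocompactness is not closed hereditary: the forward direction must therefore produce \emph{regular} closed subsets of $Y$ rather than arbitrary closed ones, while the backward direction for pseudocompactness cannot use Mr\'{o}wka's condition at all and instead calls on $\upsilon X$ together with a subnet compactness argument in $\beta X$.
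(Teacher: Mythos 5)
Your proof is correct, and in the Mr\'{o}wka case it coincides with the paper's argument almost step for step: the same cozero-set $C$ built from a function separating $t$ from $\tau_Y^{-1}(p)$ for $(1)\Rightarrow(2)$, and the same verification of condition (W) on a base of neighborhoods of $p$ pulled back from $\beta X$ for $(2)\Rightarrow(1)$. Where you genuinely diverge is in the pseudocompactness case, in both directions. For $(1)\Rightarrow(2)$ the paper first rewrites $\beta X\backslash\lambda_{{\mathcal P}}X$ as $\mbox{cl}_{\beta X}(\beta X\backslash\upsilon X)$ via Lemma \ref{JIUH} and then derives a contradiction by exhibiting a non-empty $G_\delta$-set of $\beta Y$ missing $Y$; you instead run the same regular-closed cozero-set construction as in the Mr\'{o}wka case, using that pseudocompactness is inherited by regular closed subsets (a fact the paper itself invokes elsewhere in this very proof). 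This buys a single uniform argument for both hypotheses on ${\mathcal P}$ and is a genuine streamlining. For $(2)\Rightarrow(1)$ the paper observes that a failure of pseudocompactness yields a non-empty zero-set of $\beta Y$ missing $Y$, whose preimage under $\tau_Y$ is a non-empty zero-set of $\beta X$ missing $X$, hence contained in $\beta X\backslash\upsilon X\subseteq\tau_Y^{-1}(p)$ --- an immediate contradiction since $p\notin T$; your subnet argument, splitting on whether the cluster point of the sequence lies in $\tau_Y^{-1}(p)$ or in $\upsilon X$ and invoking the universal extension property of the Hewitt realcompactification, is longer but equally valid. One small slip: in the ``in particular'' step you cite Lemma \ref{JHG} to extract a point of $X\backslash\lambda_{{\mathcal P}}X$ from the failure of local-${\mathcal P}$; that lemma assumes ${\mathcal P}$ is closed hereditary, so for pseudocompactness you need Lemma \ref{HGJHG} together with Lemma \ref{JIUH} instead, exactly as the paper does. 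This does not affect the correctness of the argument.
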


\begin{proof}
{\em Let ${\mathcal P}$ be  pseudocompactness.} Note that by Lemma \ref{JIUH} we have
\[\beta X\backslash\lambda_{{\mathcal P}} X=\beta X\backslash\mbox{int}_{\beta X}\upsilon X=\mbox{cl}_{\beta X}(\beta X\backslash\upsilon X).\]
Thus in this case, condition  (2) is equivalent to the requirement that $X$ is locally pseudocompact and
$\beta X\backslash\upsilon X\subseteq\tau^{-1}_Y(Y\backslash X)$.

Let $Y=X\cup\{p\}$. (1) {\em  implies} (2). Suppose to the contrary that $\beta X\backslash\upsilon X\nsubseteq\tau^{-1}_Y(p)$.
Let $t\in \beta X\backslash\upsilon X$ be such that $t\notin \tau^{-1}_Y(p)$. Let $S\in {\mathscr Z}(\beta X)$ be such that $t\in S$ and
$S\cap \tau^{-1}_Y(p)=\emptyset$. Since $t\notin \upsilon X$, there is a  $Z\in{\mathscr Z}(\beta X)$ such that $t\in Z$ and $X\cap Z=\emptyset$.
Note that by Lemma \ref{JFV}, the space $\beta Y$ coincides with the quotient space of $\beta X$ which is obtained by contracting $\tau^{-1}_Y(p)$ to a point with $\tau_Y$ as its quotient mapping.  Now, if we let $G=(S\cap Z)\backslash\tau^{-1}_Y(p)$, then $\tau_Y(G)$ is a non-empty $G_\delta$ in  $\beta Y$ which misses $Y$, contradicting the pseudocompactness of $Y$. Note that pseudocompactness is hereditary with respect to regular-closed subsets. Thus $X$, having a pseudocompact extension with a one-point remainder, is locally pseudocompact.

(2) {\em  implies} (1).   Suppose to the contrary that $Y$ is not pseudocompact. Then there is a non-empty
$T\in {\mathscr Z}(\beta Y)$ such that $Y\cap T=\emptyset$. But then $\tau^{-1}_Y(T)\in  {\mathscr Z}(\beta X)$ is non-empty and misses $X$, which implies that $\tau^{-1}_Y(T)\subseteq\beta X\backslash\upsilon X$. Therefore $\tau^{-1}_Y(T)\subseteq \tau^{-1}_Y(p)$,  which is a contradiction,
as $p\notin T$.

{\em Let ${\mathcal P}$ be a closed hereditary Mr\'{o}wka topological property, which is preserved under finite closed sums of subspaces.}

Let $Y=X\cup\{p\}$. (1) {\em  implies} (2).
Suppose to the contrary that  $t\notin \tau^{-1}_Y(p)$ for some $t\in \beta X\backslash\lambda_{{\mathcal P}} X$.  Let $f:\beta X\rightarrow \mathbf{I}$
be continuous and such that $f(t)=0$ and $f|\tau^{-1}_Y(p)\equiv 1$. Since  $\tau_Y(f^{-1}([0,1/2]))$ is compact (as it is a continuous image of a compact space) and thus closed in $\beta Y$, the set
\[T=X\cap f^{-1}\big([0,1/2]\big)=Y\cap\tau_Y\big(f^{-1}\big([0,1/2]\big)\big)\]
is closed in $Y$, and therefore it has ${\mathcal P}$. Let $C=X\cap f^{-1}([0,1/2))$. Then $C\in Coz(X)$ and $\mbox{cl}_X C$ has ${\mathcal P}$, as it is  closed in $T$.
By definition of $\lambda_{{\mathcal P}} X$ and using Lemma \ref{HYFU} we have
\[t\in f^{-1}\big([0,1/2)\big)\subseteq\mbox{int}_{\beta X}\mbox{cl}_{\beta X}\big(X\cap f^{-1}\big([0,1/2)\big)\big)=\mbox{int}_{\beta X}\mbox{cl}_{\beta X}C\subseteq\lambda_{{\mathcal P}} X\]
which is a contradiction. This shows that  $\beta X\backslash\lambda_{{\mathcal P}} X\subseteq\tau^{-1}_Y(p)$.
Obviously, since ${\mathcal P}$ is a  closed hereditary topological property and $X$ has a ${\mathcal P}$-extension with a one-point remainder, $X$
is  locally-${\mathcal P}$.

(2) {\em  implies} (1).  Since ${\mathcal P}$  satisfies Mr\'{o}wka's condition (W), to show that $Y$ has ${\mathcal P}$, it suffices to verify that $Y\backslash V$ has  ${\mathcal P}$, for every open neighborhood $V$ of $p$ in $Y$. Let $V$ be an
open neighborhood of $p$ in $Y$. Let $V'$ be an open subset of $\beta Y$ such that $Y\cap V'=V$. Then since
\[Y\backslash V=X\backslash V\subseteq\beta X\backslash\tau^{-1}_Y(V')\subseteq\beta X\backslash\tau^{-1}_Y(p)\subseteq\lambda_{{\mathcal P}} X\]
we have
\[\mbox{cl}_{\beta X}(Y\backslash V)\subseteq\beta X\backslash\tau^{-1}_Y(V')\subseteq\lambda_{{\mathcal P}} X\]
and thus by Lemma \ref{HGKFH}, the set $Y\backslash V$  has ${\mathcal P}$.

The final remark follows  from above and Theorem \ref{JHV} in the case  when ${\mathscr E}_{{\mathcal P}}(X)\neq\emptyset$, as in this case (by above) $X$ is locally-${\mathcal P}$.  In the case when  ${\mathscr E}_{{\mathcal P}}(X)=\emptyset$, note that the existence of a non-empty compact
$C\subseteq\beta X\backslash X$ such that $\beta X\backslash\lambda_{{\mathcal P}} X\subseteq C$ implies that $\beta X\backslash\lambda_{{\mathcal P}} X\subseteq \beta X\backslash X$, or equivalently $X\subseteq\lambda_{{\mathcal P}} X$. But by Lemmas
\ref{HGJHG} and  \ref{JHG} it then follows that $X$ is locally-$\mathcal{P}$, and therefore $\Theta_X^{-1}(C)\in{\mathscr E}_{{\mathcal P}}(X)$, which is a contradiction.
\end{proof}

The following lemma is motivated by Lemma 3.11 of \cite{Ko2}.

\begin{lemma}\label{SGFH}
Let $X$ be a Tychonoff space and let  ${\mathcal P}$ be either pseudocompactness or a  closed hereditary  topological property,
which is preserved under finite closed sums of subspaces. For $Y\in{\mathscr E}(X)$ the following are equivalent:
\begin{itemize}
\item[\rm(1)] $Y\in{\mathscr E}_{{\mathcal P}-far}(X)$.
\item[\rm(2)] $\Theta_X(Y)\subseteq\beta X\backslash\lambda_{{\mathcal P}} X$.
\end{itemize}
\end{lemma}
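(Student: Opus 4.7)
The plan is to exploit the identification $\Theta_X(Y)=\tau_Y^{-1}(p)$ (where $Y=X\cup\{p\}$), together with the fact that $\tau_Y:\beta X\rightarrow\beta Y$ is a closed map (being continuous from a compact space to a Hausdorff space) fixing $X$ pointwise, so that for any $Z\subseteq X$ one has $\tau_Y(\mbox{cl}_{\beta X}Z)=\mbox{cl}_{\beta Y}Z$, whence
\[
p\in\mbox{cl}_Y Z\quad\Longleftrightarrow\quad\tau_Y^{-1}(p)\cap\mbox{cl}_{\beta X}Z\neq\emptyset.
\]
The lemma then reduces to comparing $\mbox{cl}_{\beta X}Z$ with $\lambda_{{\mathcal P}} X$ for the zero-sets $Z\in{\mathscr Z}(X)$ appearing in Definition \ref{JSB}.

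For (2)$\Rightarrow$(1), assume $\Theta_X(Y)\cap\lambda_{{\mathcal P}} X=\emptyset$ and let $Z\in{\mathscr Z}(X)$ be contained in some $C\in Coz(X)$ with $\mbox{cl}_X C$ having ${\mathcal P}$. Since $Z$ and the zero-set $X\backslash C$ are disjoint, complete separation produces a continuous $\phi:X\rightarrow\mathbf{I}$ with $\phi|Z\equiv 0$ and $\phi|(X\backslash C)\equiv 1$; let $\phi_\beta:\beta X\rightarrow\mathbf{I}$ be its continuous extension. Then $\mbox{cl}_{\beta X}Z\subseteq\phi_\beta^{-1}(0)\subseteq\beta X\backslash\mbox{cl}_{\beta X}(X\backslash C)$, and the rightmost open set traces $C$ on the dense subspace $X$, hence (by the observation preceding Lemma \ref{HYFU}) is contained in $\mbox{int}_{\beta X}\mbox{cl}_{\beta X}C\subseteq\lambda_{{\mathcal P}} X$. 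The hypothesis then forces $\mbox{cl}_{\beta X}Z\cap\tau_Y^{-1}(p)=\emptyset$, and the displayed equivalence yields $p\notin\mbox{cl}_Y Z$.

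For (1)$\Rightarrow$(2), I argue by contradiction: suppose $t\in\tau_Y^{-1}(p)\cap\lambda_{{\mathcal P}} X$ and pick $C_0\in Coz(X)$ with $t\in\mbox{int}_{\beta X}\mbox{cl}_{\beta X}C_0$ and $\mbox{cl}_X C_0$ having ${\mathcal P}$. Choose a continuous $f:\beta X\rightarrow\mathbf{I}$ with $f(t)=0$ and $f\equiv 1$ off $\mbox{int}_{\beta X}\mbox{cl}_{\beta X}C_0$, and set $Z=X\cap f^{-1}([0,1/2])\in{\mathscr Z}(X)$ and $C=X\cap f^{-1}([0,2/3))\in Coz(X)$, so that $Z\subseteq C$. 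Lemma \ref{HYFU} gives $t\in\mbox{cl}_{\beta X}Z$, hence $p\in\mbox{cl}_Y Z$, which will contradict the ${\mathcal P}$-far hypothesis once $\mbox{cl}_X C$ is shown to have ${\mathcal P}$. This verification, which I regard as the main obstacle, is where the case split in the statement enters. When ${\mathcal P}$ is closed hereditary, the inclusion $\mbox{cl}_X C\subseteq X\cap f^{-1}([0,2/3])\subseteq\mbox{cl}_X C_0$ combined with heredity suffices. When ${\mathcal P}$ is pseudocompactness, I invoke Lemma \ref{JIUH} to get $\mbox{cl}_{\beta X}C\subseteq\mbox{int}_{\beta X}\mbox{cl}_{\beta X}C_0\subseteq\mbox{int}_{\beta X}\upsilon X\subseteq\upsilon X$, so $\mbox{cl}_{\upsilon X}C=\mbox{cl}_{\beta X}C$ is compact and Lemma \ref{A} delivers pseudocompactness of $\mbox{cl}_X C$.
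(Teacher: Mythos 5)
Your proof is correct, and while its overall architecture matches the paper's (translate membership of $p$ in $\mbox{cl}_YZ$ into the condition $\tau_Y^{-1}(p)\cap\mbox{cl}_{\beta X}Z\neq\emptyset$ via the quotient description of $\beta Y$ from Lemma \ref{JFV}, then compare the relevant closures with $\lambda_{\mathcal P}X$), both directions are executed by a genuinely more elementary route. In $(2)\Rightarrow(1)$ the paper shrinks $C$ to an auxiliary cozero-set $D=g^{-1}([0,1/2))$ and must argue that $\mbox{cl}_XD$ inherits $\mathcal{P}$ as a regular-closed subset of $\mbox{cl}_XC$ (with a separate remark for pseudocompactness); you bypass this entirely by noting that the open set $\beta X\backslash\mbox{cl}_{\beta X}(X\backslash C)$ traces exactly $C$ on $X$ (this uses that $C$ is open, so $X\backslash C$ is closed in $X$) and therefore lies in $\mbox{int}_{\beta X}\mbox{cl}_{\beta X}C\subseteq\lambda_{\mathcal P}X$ straight from Definition \ref{HGA}, so no heredity of $\mathcal{P}$ is invoked at all in this direction. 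In $(1)\Rightarrow(2)$ the paper separates $t$ from all of $\beta X\backslash\lambda_{\mathcal P}X$ at once and must then call on Lemma \ref{HGKFH} --- hence on the hypothesis that $\mathcal{P}$ is preserved under finite closed sums --- to conclude that $\mbox{cl}_XC$ has $\mathcal{P}$; you localize instead to a single witnessing cozero-set $C_0$ with $t\in\mbox{int}_{\beta X}\mbox{cl}_{\beta X}C_0$, so that $\mbox{cl}_XC\subseteq X\cap f^{-1}([0,2/3])\subseteq\mbox{cl}_XC_0$ and closed heredity alone suffices, with Lemmas \ref{JIUH} and \ref{A} handling pseudocompactness exactly as in the paper. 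The net gain is that your argument establishes the lemma for every closed hereditary $\mathcal{P}$, showing that the finite-closed-sums hypothesis in the statement is an artifact of the paper's detour through Lemma \ref{HGKFH} rather than something the result actually requires.
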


\begin{proof}
Let $Y=X\cup\{p\}$. (1) {\em  implies} (2).  Suppose to the contrary that $t\notin \beta X\backslash\lambda_{{\mathcal P}} X$
for some $t\in \tau_Y^{-1} (p)$. Note that by definition the set $\lambda_{{\mathcal P}} X$ is open in $\beta X$. Let $f:\beta X\rightarrow {\mathbf I}$ be continuous
and such that $f(t)=0$ and $f|( \beta X\backslash\lambda_{{\mathcal P}} X)\equiv 1$. Let
\[Z= X\cap f^{-1}\big([0,1/3]\big)\in {\mathscr Z}(X)\mbox{ and } C=X\cap f^{-1}\big([0,1/2)\big)\in Coz(X).\]
Note that $C\subseteq f^{-1}([0,1/2])$, and since $\mbox{cl}_{\beta X}C\subseteq f^{-1}([0,1/2])\subseteq\lambda_{{\mathcal P}} X$,
by Lemmas  \ref{B}, \ref{A} and \ref{HGKFH} it follows that $\mbox{cl}_XC$  has ${\mathcal P}$.
Therefore, using the assumption  we have $\mbox{cl}_Y Z\cap \{p\}=\emptyset$, or equivalently $p\notin \mbox{cl}_Y Z=Y\cap \mbox{cl}_{\beta Y} Z$, or $p\notin \mbox{cl}_{\beta Y} Z$. But this implies that $\tau_Y^{-1} (p)\cap\tau_Y^{-1} (\mbox{cl}_{\beta Y} Z)=\emptyset$, and thus $t\notin \tau_Y^{-1} (\mbox{cl}_{\beta Y} Z)$, as
$t\in \tau_Y^{-1} (p)$. Now
\begin{eqnarray*}
X\cap f^{-1}\big([0,1/3)\big)&\subseteq&\tau_Y^{-1} \big(X\cap f^{-1}\big([0,1/3)\big)\big)\\&\subseteq&\tau_Y^{-1}\big(X\cap f^{-1}\big([0,1/3]\big)\big)=\tau_Y^{-1} (Z)\subseteq\tau_Y^{-1}(\mbox{cl}_{\beta Y} Z)
\end{eqnarray*}
and the latter set is closed in $\beta X$, therefore
\[t\in f^{-1}\big([0,1/3)\big)\subseteq\mbox{cl}_{\beta X}f^{-1}\big([0,1/3)\big)=\mbox{cl}_{\beta X}\big(X\cap f^{-1}\big([0,1/3)\big)\big)\subseteq\tau_Y^{-1} (\mbox{cl}_{\beta Y} Z).\]
This contradiction proves that $\tau_Y^{-1} (p)\subseteq \beta X\backslash\lambda_{{\mathcal P}} X$.

(2) {\em  implies} (1).   Let $Z\in {\mathscr Z}(X)$ be such that $Z\subseteq C$, for some $C\in Coz(X)$ whose closure
$\mbox{cl}_X C$ has ${\mathcal P}$. The zero-sets $Z$ and $X\backslash C$ of $X$ are  disjoint, and thus completely separated in $X$. Let
$g:X\rightarrow\mathbf{I}$ be continuous and such that $g|Z \equiv 0$ and  $g|(X\backslash C)\equiv 1$. Let
$g_\beta:\beta X\rightarrow\mathbf{I}$  continuously  extend $g$. Let
\[D=g^{-1}\big([0,1/2)\big)\in Coz(X).\]
Then $D\subseteq C$,
and  since  $\mbox{cl}_X D$ is regular-closed in $\mbox{cl}_X C$, the set $\mbox{cl}_X D$ has  $\mathcal{P}$.
(Note again that pseudocompactness is hereditary with respect to regular-closed subsets.)  By definition of $\lambda_{{\mathcal P}} X$ and using Lemma \ref{HYFU} we have
\begin{eqnarray*}
g_\beta^{-1}\big([0,1/2)\big)&\subseteq&\mbox{int}_{\beta X}\mbox{cl}_{\beta X}\big(X\cap g_\beta^{-1}\big([0,1/2)\big)\big)\\&=&\mbox{int}_{\beta X}\mbox{cl}_{\beta X}g^{-1}\big([0,1/2)\big)=\mbox{int}_{\beta X}\mbox{cl}_{\beta X}D\subseteq\lambda_{{\mathcal P}} X.
\end{eqnarray*}
But
\begin{eqnarray*}
\mbox{cl}_{\beta X}Z\subseteq \mbox{cl}_{\beta X}g^{-1}\big([0,1/3)\big)&=&\mbox{cl}_{\beta X}\big(X\cap g_\beta^{-1}\big([0,1/3)\big)\big)\\&=&\mbox{cl}_{\beta X}g_\beta^{-1}\big([0,1/3)\big)\subseteq g_\beta^{-1}\big([0,1/3]\big)
\end{eqnarray*}
which together with above implies that $\mbox{cl}_{\beta X}Z\subseteq\lambda_{{\mathcal P}} X$.
Recall that by Lemma \ref{JFV}, the space $\beta Y$ is the quotient space of $\beta X$ which is obtained by contracting  $\tau_Y^{-1}(p)$ to $p$ and $\tau_Y$ is the quotient
mapping. Note that by assumption we have $\tau_Y^{-1} (p)\subseteq \beta X\backslash\lambda_{{\mathcal P}} X$. Now $Z\subseteq\tau_Y (\mbox{cl}_{\beta X} Z)$ and the latter is closed in $\beta Y$ (as it is compact, as it is a continuous image of a compact space), therefore
\[\mbox{cl}_{\beta Y} Z \subseteq\tau_Y(\mbox{cl}_{\beta X} Z)=\mbox{cl}_{\beta X} Z\subseteq\lambda_{{\mathcal P}} X\]
and  since
\[\mbox{cl}_Y Z\cap \{p\}\subseteq\mbox{cl}_{\beta Y} Z\cap \{p\}\subseteq\lambda_{{\mathcal P}} X\cap \{p\}=\emptyset\]
we have $\mbox{cl}_Y Z\cap \{p\}=\emptyset$. This shows that  $Y$ is a ${\mathcal P}$-far extension of $X$.
\end{proof}

\section{Embedding the set of  one-point first-countable  extensions into the set of one-point ${\mathcal P}$-extensions}

In this section, we anti-order-isomorphically embed the set of one-point first-countable  extensions of a space  into the set of its one-point ${\mathcal P}$-extensions.

\begin{notation}\label{KKJB}
{\em Let $X$ be a Tychonoff space. For a subset $A$ of $X$, let
\[A^\star=\mbox{cl}_{\beta X}A\backslash X.\]
Thus, in particular,  $X^\star=\beta X\backslash X$.}
\end{notation}

\begin{remark}\label{HFS}
The notation given in Notation \ref{KKJB} can be ambiguous, as in the case when $A$ is not $C^*$-embedded in $X$, the notation $A^\star$ can mean either $\beta A\backslash A$ or $\mbox{cl}_{\beta X}A\backslash X$ and these spaces are not homeomorphic. In such situations, we will always take $A^\star$ to mean the second of these two possibilities.
\end{remark}

The following is Lemma 4.6 of \cite{Ko2}.

\begin{lemma}\label{HSJHG}
Let $X$ be a  locally compact space. If a $Z\in {\mathscr Z}(\beta X)$  misses $X$, then $Z$ is regular-closed in $X^\star$.
\end{lemma}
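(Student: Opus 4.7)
The plan is to prove $Z\subseteq\mbox{cl}_{X^\star}(\mbox{int}_{X^\star} Z)$, since the reverse inclusion is automatic ($Z$ is closed in $\beta X$ and contained in $X^\star$, hence closed in $X^\star$). Write $Z=Z(f)$ with $f:\beta X\to\mathbf{I}$ continuous; then $f|X>0$ because $Z\cap X=\emptyset$. Fix $t\in Z$ and an arbitrary open $\beta X$-neighborhood $N'$ of $t$. It suffices to exhibit a nonempty open subset of $X^\star$ contained in $Z\cap N'$, since such a set lies in $\mbox{int}_{X^\star} Z$.

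First I localize. By complete regularity of $\beta X$ pick $g:\beta X\to\mathbf{I}$ with $g(t)=0$ and $g\equiv 1$ off $N'$, and set $h:=f\vee g$. Then $h$ is continuous, $h(t)=0$, $h\geq f>0$ on $X$, and $h\equiv 1$ off $N'$; so $Z':=Z(h)=Z\cap Z(g)$ is a nonempty zero-set of $\beta X$, disjoint from $X$, contained in $Z\cap N'$. The whole lemma therefore reduces to the following interior claim: every nonempty $Z'\in\mathscr{Z}(\beta X)$ with $Z'\cap X=\emptyset$ has nonempty interior in $X^\star$. Indeed, $\mbox{int}_{X^\star} Z'$ would then be a nonempty open subset of $X^\star$ lying in $Z'\subseteq Z\cap N'$, forcing $t\in\mbox{cl}_{X^\star}(\mbox{int}_{X^\star} Z)$.

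For the interior claim, write $Z'=Z(h)$ with $h|X>0$, and for each $n\in\mathbf{N}$ form the cozero-set $C_n:=X\cap h^{-1}([0,1/n))$ of $X$, which is also open in $\beta X$ since $X$ is open in $\beta X$ (local compactness). Density of $X$ in $\beta X$ plus openness of $h^{-1}([0,1/n))$ yields $\mbox{cl}_{\beta X} C_n=\mbox{cl}_{\beta X}(h^{-1}([0,1/n)))\subseteq h^{-1}([0,1/n])$, while $Z'\subseteq h^{-1}([0,1/n))\subseteq\mbox{int}_{\beta X}(\mbox{cl}_{\beta X}C_n)$. Hence the open sets $U_n:=\mbox{int}_{\beta X}(\mbox{cl}_{\beta X}C_n)\cap X^\star$ in the compact space $X^\star$ form a decreasing family of open $X^\star$-neighborhoods of $Z'$ with $\bigcap_n U_n=Z'$. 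I would now refine inductively: at each stage, use complete regularity in $\beta X$ to separate the part of $\mbox{cl}_{\beta X}C_n\cap X^\star$ on which $h>0$ from a smaller target, producing a sub-cozero-set $C'_n\subseteq C_n$ of $X$ whose closure in $\beta X$ has its $X^\star$-trace pulled away from the $h$-positive portion of $X^\star$; the nested sequence of closed subsets of compact $X^\star$ so obtained is shown to be contained in $Z'$ and, by a final appeal to local compactness, to have nonempty interior in $X^\star$.

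The main obstacle is this last refinement: a nonempty $G_\delta$ in an arbitrary compact Hausdorff space need not have nonempty interior (witness $\{0\}\subseteq[0,1]$), so merely knowing $Z'=\bigcap_n U_n$ is not enough. The decisive input is that the approximating $C_n$'s are cozero-sets of $X$ and that their $\beta X$-closures arise as closures of sets open in $\beta X$; combined with $X^\star$ being closed in $\beta X$ (exactly the content of local compactness), this imparts an $F$-space-type regularity to the outgrowth $X^\star$ which forces the nested family of $X^\star$-traces to accumulate to a subset of $Z'$ with nonempty $X^\star$-interior. That interior is the desired open subset of $X^\star$ inside $Z\cap N'$, completing the proof.
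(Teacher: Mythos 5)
Your reduction is sound and, modulo cosmetic differences, is the same one the paper makes: proving $Z\subseteq\mbox{cl}_{X^\star}\mbox{int}_{X^\star}Z$ comes down to the single fact that every non-empty $Z'\in{\mathscr Z}(\beta X)$ with $Z'\cap X=\emptyset$ has non-empty interior in $X^\star$. (The paper localizes slightly differently, separating a putative bad point from $\mbox{cl}_{X^\star}\mbox{int}_{X^\star}Z$ by a zero-set $S$ of $\beta X$ and applying this same interior claim to $T=S\cap Z$ to get a contradiction.) The parts of your argument that are actually carried out are correct: $Z(h)=Z\cap Z(g)$ is a non-empty zero-set of $\beta X$ missing $X$ and contained in $Z\cap N'$, and indeed $Z'=\bigcap_n U_n$ for $U_n=\mbox{int}_{\beta X}\mbox{cl}_{\beta X}C_n\cap X^\star$.

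The gap is that you never prove the interior claim, and it is the entire content of the lemma. The paper does not prove it either; it invokes it as a known result (Lemma 15.17 of Comfort and Negrepontis, \emph{The Theory of Ultrafilters}). Your ``inductive refinement'' is not an argument: the sets $C_n'$ are never defined, the sense in which their $X^\star$-traces are ``pulled away from the $h$-positive portion of $X^\star$'' is never specified, and the closing sentence---that local compactness ``imparts an $F$-space-type regularity \ldots which forces'' the nested traces to accumulate to a set with non-empty $X^\star$-interior---simply asserts the conclusion to be proved. You rightly observe that a non-empty $G_\delta$ in a compact space can have empty interior, so the identity $Z'=\bigcap_n U_n$ cannot suffice; but nothing in the proposal supplies the missing ingredient. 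To see that genuine work is required, test the claim on $X=\mathbf{N}$: there $Z'=\bigcap_n E_n^\star$ for the decreasing sequence of infinite sets $E_n=\{k\in\mathbf{N}:h(k)\leq 1/n\}$, and exhibiting a non-empty open subset of $X^\star$ inside $Z'$ amounts to constructing an infinite pseudo-intersection $E$ with $E\backslash E_n$ finite for every $n$---a diagonal construction, not a formal consequence of the compactness of $X^\star$. You should either cite the Comfort--Negrepontis lemma, as the paper does, or give an honest proof of the interior claim for general locally compact $X$; as written, the proof is incomplete at exactly the point where all the difficulty lies.
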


\begin{proof}
Let $x\in Z$. Suppose that  $x\notin \mbox{cl}_{X^\star}\mbox{int}_{X^\star} Z$. Let $S\in {\mathscr Z}(\beta X)$ be such that $x\in S$ and  $S\cap  \mbox{cl}_{X^\star}\mbox{int}_{X^\star} Z=\emptyset$. Let $T=S\cap Z$. Then  $\mbox{int}_{X^\star} T\neq\emptyset$. (Recall that for any locally compact space $Y$, any non-empty zero-set of $\beta Y$ which is contained in  $Y^\star$ has a non-empty interior in $Y^\star$; see Lemma 15.17 of \cite{CN}.) But this is a contradiction, as
$\mbox{int}_{X^\star} T\subseteq\mbox{int}_{X^\star} Z$ and $T\cap\mbox{int}_{X^\star} Z=\emptyset$, as $T\cap\mbox{int}_{X^\star} Z\subseteq S\cap\mbox{cl}_{X^\star} \mbox{int}_{X^\star} Z$. Thus
$x\in \mbox{cl}_{X^\star}\mbox{int}_{X^\star} Z$, which shows that $Z$ is regular-closed in $X^\star$.
\end{proof}

\begin{lemma}\label{KLB}
Let $X$ be a Tychonoff space. Then for every $t\in X^\star$ we have
\[\bigcap\big\{T^\star:T\in {\mathscr Z}(X)\mbox{ and }t\in\mbox{\em int}_{\beta X}\mbox{\em cl}_{\beta X}T\big\}=\{t\}.\]
\end{lemma}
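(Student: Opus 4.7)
The proof plan is to show both inclusions of the displayed equality. The inclusion $\{t\} \subseteq \bigcap \{\cdots\}$ is immediate, since if $t \in \mathrm{int}_{\beta X}\mathrm{cl}_{\beta X} T$ then in particular $t \in \mathrm{cl}_{\beta X} T$, and because $t \in X^\star = \beta X \setminus X$ it follows that $t \in \mathrm{cl}_{\beta X} T \setminus X = T^\star$.

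For the reverse inclusion, I would argue by contrapositive: given $s \in X^\star$ with $s \neq t$, I construct a zero-set $T \in \mathscr{Z}(X)$ witnessing $t \in \mathrm{int}_{\beta X}\mathrm{cl}_{\beta X} T$ and $s \notin T^\star$. Since $\beta X$ is compact Hausdorff, choose a continuous $f \colon \beta X \to \mathbf{I}$ with $f(t) = 0$ and $f(s) = 1$, and set
\[
T = X \cap f^{-1}\bigl([0,1/3]\bigr) \in \mathscr{Z}(X).
\]

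To verify that $t \in \mathrm{int}_{\beta X}\mathrm{cl}_{\beta X} T$, note that $X \cap f^{-1}([0,1/3)) \subseteq T$, so
\[
\mathrm{int}_{\beta X}\mathrm{cl}_{\beta X}\bigl(X \cap f^{-1}([0,1/3))\bigr) \subseteq \mathrm{int}_{\beta X}\mathrm{cl}_{\beta X} T,
\]
and by Lemma \ref{HYFU} applied to $F = f$ and $r = 1/3$, the left-hand side contains $f^{-1}([0,1/3))$, which in turn contains $t$. To verify that $s \notin T^\star$, observe that $T \subseteq f^{-1}([0,1/3])$ and the latter is closed in $\beta X$, so $\mathrm{cl}_{\beta X} T \subseteq f^{-1}([0,1/3])$; since $f(s) = 1$, we have $s \notin \mathrm{cl}_{\beta X} T$ and hence $s \notin T^\star$.

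This exhibits a $T$ in the indexing family with $s \notin T^\star$, so $s$ does not lie in the intersection. There is no real obstacle here — the only subtlety is remembering to use $f^{-1}([0,1/3])$ (a closed set containing $T$) to exclude $s$, while simultaneously using Lemma \ref{HYFU} on the open version $f^{-1}([0,1/3))$ to place $t$ in the interior of the closure, which is exactly why two distinct levels $1/3$ and (implicitly via strict inequality) matter.
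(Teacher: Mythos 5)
Your proof is correct and follows essentially the same route as the paper's: the easy inclusion is handled identically, and for the reverse inclusion the paper likewise separates $s$ from $t$ by a continuous $f:\beta X\to\mathbf{I}$, takes $T=X\cap f^{-1}([0,1/2])$ (you use $1/3$), places $t$ in $\mbox{int}_{\beta X}\mbox{cl}_{\beta X}T$ via Lemma \ref{HYFU} applied to the strict sublevel set, and excludes $s$ because $\mbox{cl}_{\beta X}T\subseteq f^{-1}([0,1/2])$. The only difference is that you argue by contrapositive where the paper argues by contradiction, which is immaterial.
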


\begin{proof}
Let $t\in X^\star$. Obviously, $t\in T^\star$ for every $T\in {\mathscr Z}(X)$ such that $t\in\mbox{int}_{\beta X}\mbox{cl}_{\beta X}T$.
To show the converse, suppose to the contrary that there is a  $s\neq t$  such that $s\in T^\star$, for every $T\in {\mathscr Z}(X)$ with
$t\in\mbox{int}_{\beta X}\mbox{cl}_{\beta X}T$. Let  $f:\beta X\rightarrow{\mathbf I}$ be continuous and such that $f(t)=0$ and
$f(s)=1$. Let
\[T=X\cap f^{-1}\big([0,1/2]\big)\in {\mathscr Z}(X).\]
Note that using Lemma \ref{HYFU} we have
\begin{eqnarray*}
t\in f^{-1}\big([0,1/2)\big)&\subseteq&\mbox{int}_{\beta X}\mbox{cl}_{\beta X}\big(X\cap f^{-1}\big([0,1/2)\big)\big)\\&\subseteq&\mbox{int}_{\beta X}\mbox{cl}_{\beta X}\big(X\cap f^{-1}\big([0,1/2]\big)\big)=\mbox{int}_{\beta X}\mbox{cl}_{\beta X}T.
\end{eqnarray*}
But $s\notin \mbox{cl}_{\beta X}T$, as  $\mbox{cl}_{\beta X}T\subseteq f^{-1}([0,1/2])$.
\end{proof}

The following is a slight modification of a lemma from \cite{Ko3}.

\begin{lemma}\label{KJHG}
Let $X$ be a Tychonoff locally-$\mathcal{P}$ space, where  $\mathcal{P}$ is  a closed hereditary topological property, which  is preserved under  finite
closed sums of subspaces.  Then $\lambda_{{\mathcal P}} X$ is compact if and only if $X$ has $\mathcal{P}$.
\end{lemma}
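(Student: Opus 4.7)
The plan is to reduce both implications to statements about $\lambda_{{\mathcal P}}X$ as an open subset of $\beta X$. The key observation is that $\lambda_{{\mathcal P}} X$ is open in $\beta X$ by its very definition, and $\beta X$ is compact Hausdorff, so $\lambda_{{\mathcal P}}X$ is compact precisely when it is closed in $\beta X$.

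For the ``if'' direction, I would observe that whenever $X$ itself has $\mathcal{P}$ one can take $C=X$ in the defining union of $\lambda_{{\mathcal P}}X$: indeed $X\in Coz(X)$ (as $X=X\setminus Z(1)$), and $\mbox{cl}_X X=X$ has $\mathcal{P}$, while $\mbox{int}_{\beta X}\mbox{cl}_{\beta X}X=\mbox{int}_{\beta X}\beta X=\beta X$ because $X$ is dense in $\beta X$. Therefore $\lambda_{{\mathcal P}}X=\beta X$, which is compact.

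For the ``only if'' direction, I would argue as follows. Since $X$ is locally-$\mathcal{P}$ and $\mathcal{P}$ is closed hereditary, Lemma \ref{JHG} gives $X\subseteq\lambda_{{\mathcal P}}X$. Assuming $\lambda_{{\mathcal P}}X$ is compact, it is closed in $\beta X$; being a closed subset of $\beta X$ containing the dense subset $X$, it must equal $\beta X$. In particular $\mbox{cl}_{\beta X}X=\beta X\subseteq\lambda_{{\mathcal P}}X$, and applying Lemma \ref{HGKFH} with $A=X$ (using that $\mathcal{P}$ is closed hereditary and preserved under finite closed sums) yields that $\mbox{cl}_X X=X$ has $\mathcal{P}$.

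There is no serious obstacle here; the whole argument consists of noting that ``compact + open + $\beta X$ Hausdorff'' forces ``clopen'', and then bootstrapping via the two earlier lemmas \ref{JHG} and \ref{HGKFH}. The only minor subtlety worth flagging in the write-up is that the density of $X$ in $\beta X$ is what converts ``$\lambda_{{\mathcal P}}X$ contains $X$ and is closed'' into ``$\lambda_{{\mathcal P}}X=\beta X$''; this symmetry between the two directions (both reduce to the identification $\lambda_{{\mathcal P}}X=\beta X$) is what makes the equivalence clean.
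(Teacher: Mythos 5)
Your argument is correct and follows essentially the same route as the paper: the ``if'' direction takes $C=X$ in the definition of $\lambda_{{\mathcal P}}X$ to get $\lambda_{{\mathcal P}}X=\beta X$, and the ``only if'' direction uses compactness (hence closedness in $\beta X$) together with $X\subseteq\lambda_{{\mathcal P}}X$ from Lemma \ref{JHG} to conclude $\mbox{cl}_{\beta X}X\subseteq\lambda_{{\mathcal P}}X$ and then applies Lemma \ref{HGKFH}. The only cosmetic difference is that you invoke openness of $\lambda_{{\mathcal P}}X$, which is not actually needed since a compact subset of the Hausdorff space $\beta X$ is automatically closed.
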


\begin{proof}
Suppose that $\lambda_{{\mathcal P}} X$ is compact. By Lemma \ref{JHG} we have $X\subseteq\lambda_{{\mathcal P}} X$, and thus $\mbox{cl}_{\beta X}X\subseteq\lambda_{{\mathcal P}} X$. But by Lemma \ref{HGKFH}, this implies that $X$ has $\mathcal{P}$. The converse follows from definition of $\lambda_{{\mathcal P}} X$ (and the  obvious fact that  $X\in Coz(X)$).
\end{proof}

\begin{lemma}\label{FJHSG}
Let $X$ be a Tychonoff space and let $\mathcal{P}$ be a  closed hereditary topological property. Then
\[\lambda_{{\mathcal P}} X=\bigcup\big\{\mbox{\em int}_{\beta X} \mbox{\em cl}_{\beta X}Z:Z\in {\mathscr Z}(X)\mbox{ has } {\mathcal P}\big\}.\]
\end{lemma}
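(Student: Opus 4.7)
The plan is to prove equality of the two sets by two symmetric inclusions, both relying on the same Urysohn-style trick combined with Lemma \ref{HYFU} and the closed-hereditariness of $\mathcal{P}$. Write $\mu_{\mathcal{P}} X$ for the right-hand side.

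For the inclusion $\mu_{\mathcal P} X\subseteq\lambda_{\mathcal P} X$, I would take $Z\in{\mathscr Z}(X)$ with $\mathcal{P}$ and a point $t\in\mbox{int}_{\beta X}\mbox{cl}_{\beta X}Z$, and produce a cozero-set $D$ of $X$ with $\mbox{cl}_X D\subseteq Z$ and $t\in\mbox{int}_{\beta X}\mbox{cl}_{\beta X}D$. Since $\mathcal{P}$ is closed-hereditary, $\mbox{cl}_X D$ then has $\mathcal{P}$, placing $t$ in $\lambda_{\mathcal P} X$ by definition. To build $D$, choose an open $\beta X$-neighborhood $U$ of $t$ contained in $\mbox{cl}_{\beta X}Z$, pick $g:\beta X\to\mathbf{I}$ continuous with $g(t)=0$ and $g|(\beta X\setminus U)\equiv 1$, and set $D=X\cap g^{-1}([0,1/2))\in Coz(X)$. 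Then $\mbox{cl}_{\beta X}D\subseteq g^{-1}([0,1/2])\subseteq U\subseteq\mbox{cl}_{\beta X}Z$, and intersecting with $X$ gives $\mbox{cl}_X D\subseteq X\cap\mbox{cl}_{\beta X}Z=\mbox{cl}_X Z=Z$. Lemma \ref{HYFU} places $t\in g^{-1}([0,1/2))\subseteq\mbox{int}_{\beta X}\mbox{cl}_{\beta X}D$, as required.

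For the reverse inclusion $\lambda_{\mathcal P} X\subseteq\mu_{\mathcal P} X$, let $C\in Coz(X)$ with $\mbox{cl}_X C$ having $\mathcal{P}$ and let $t\in\mbox{int}_{\beta X}\mbox{cl}_{\beta X}C$. I would run the same Urysohn construction, taking now an open $\beta X$-neighborhood $U$ of $t$ with $U\subseteq\mbox{cl}_{\beta X}C$, a continuous $g:\beta X\to\mathbf{I}$ with $g(t)=0$ and $g|(\beta X\setminus U)\equiv 1$, and defining the zero-set $E=X\cap g^{-1}([0,1/3])\in{\mathscr Z}(X)$. Since $E\subseteq X\cap g^{-1}([0,1/2])\subseteq X\cap U\subseteq X\cap\mbox{cl}_{\beta X}C=\mbox{cl}_X C$, the closed-hereditariness of $\mathcal{P}$ gives that $E$ has $\mathcal{P}$. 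Applying Lemma \ref{HYFU} to $g$ with $r=1/3$ yields $t\in g^{-1}([0,1/3))\subseteq\mbox{int}_{\beta X}\mbox{cl}_{\beta X}(X\cap g^{-1}([0,1/3)))\subseteq\mbox{int}_{\beta X}\mbox{cl}_{\beta X}E$, so $t\in\mu_{\mathcal P} X$.

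There is no real obstacle here: the only delicate point is keeping the chain of inclusions $\mbox{cl}_{\beta X}D\subseteq\mbox{cl}_{\beta X}Z$ (resp.\ $E\subseteq\mbox{cl}_X C$) faithful when passing between $\beta X$-closures and $X$-closures, which is handled by the identity $\mbox{cl}_X A=X\cap\mbox{cl}_{\beta X}A$ and the fact that $Z$ is already $X$-closed. The two constructions are entirely parallel, and together they give the claimed equality.
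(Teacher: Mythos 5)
Your proof is correct and follows essentially the same route as the paper's: both inclusions are obtained by the same Urysohn-function construction inside $\mbox{int}_{\beta X}\mbox{cl}_{\beta X}(\cdot)$, an application of Lemma \ref{HYFU}, and the closed-hereditariness of $\mathcal{P}$ (the paper simply uses the set $\mbox{int}_{\beta X}\mbox{cl}_{\beta X}C$ itself as the neighborhood $U$ and the threshold $1/2$ throughout, but these are immaterial differences). No gaps.
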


\begin{proof}
Let $t\in \lambda_{{\mathcal P}} X$. Then $t\in \mbox{int}_{\beta X} \mbox{cl}_{\beta X}C$ for some  $C\in Coz(X)$ such that $\mbox{cl}_X C$ has
${\mathcal P}$. Let $f:\beta X\rightarrow{\mathbf I}$ be continuous and such that $f(t)=0$ and
$f|(\beta X\backslash\mbox{int}_{\beta X} \mbox{cl}_{\beta X}C)\equiv 1$. Let
\[Z=X\cap f^{-1}\big([0,1/2]\big)\in {\mathscr Z}(X).\]
Since
\[f^{-1}\big([0,1/2]\big)\subseteq\mbox{int}_{\beta X} \mbox{cl}_{\beta X}C\subseteq \mbox{cl}_{\beta X}C\]
we have
\[Z=X\cap f^{-1}\big([0,1/2]\big)\subseteq X \cap\mbox{cl}_{\beta X}C=\mbox{cl}_X C\]
and thus $Z$ has  $\mathcal{P}$, as it is closed in $\mbox{cl}_X C$.  Using Lemma \ref{HYFU} we have
\begin{eqnarray*}
t\in f^{-1}\big([0,1/2)\big)&\subseteq&\mbox{int}_{\beta X}\mbox{cl}_{\beta X}\big(X\cap f^{-1}\big([0,1/2)\big)\big)\\&\subseteq&\mbox{int}_{\beta X}\mbox{cl}_{\beta X}\big(X\cap f^{-1}\big([0,1/2]\big)\big)=\mbox{int}_{\beta X}\mbox{cl}_{\beta X}Z.
\end{eqnarray*}

Next, to show the reverse inclusion, let  $s\in \mbox{int}_{\beta X}\mbox{cl}_{\beta X}S$ for some $S\in {\mathscr Z}(X)$ which has  $\mathcal{P}$. Let
$g:\beta X\rightarrow{\mathbf I}$ be continuous and such that $g(s)=0$ and $g|(\beta X\backslash\mbox{int}_{\beta X} \mbox{cl}_{\beta X}S)\equiv 1$.
Let
\[D=X\cap g^{-1}\big([0,1/2)\big)\in Coz(X).\]
Note that
\[D=X\cap g^{-1}\big([0,1/2)\big)\subseteq X\cap\mbox{int}_{\beta X}\mbox{cl}_{\beta X}S\subseteq X\cap\mbox{cl}_{\beta X}S=S\]
and therefore $\mbox{cl}_X D$ has  $\mathcal{P}$, as it is closed in $S$. By definition of
$\lambda_{{\mathcal P}} X$ and using Lemma \ref{HYFU} we have
\[s\in g^{-1}\big([0,1/2)\big)\subseteq\mbox{int}_{\beta X}\mbox{cl}_{\beta X}\big(X\cap g^{-1}\big([0,1/2)\big)\big)=\mbox{int}_{\beta X}\mbox{cl}_{\beta X}D\subseteq\lambda_{{\mathcal P}} X.\]
\end{proof}

The following generalizes  Lemma 4.5 of \cite{Ko2}.  Lemma 4.5 of \cite{Ko2}, itself, exploits  an idea from Lemma 6.4 of \cite{HJW}.

\begin{lemma}\label{JDSB}
Let $X$ be a locally compact locally-${\mathcal P}$ space, where ${\mathcal P}$ is a  closed hereditary topological property. Suppose that either
\begin{itemize}
\item[\rm(1)] $X$ is paracompact and ${\mathcal P}$ is preserved under locally finite closed sums of subspaces, or
\item[\rm(2)] ${\mathcal P}$ is preserved under countable closed sums of subspaces.
\end{itemize}
If a $Z\in {\mathscr Z}(\beta X)$ is such that $X\cap Z=\emptyset$, then $\mbox{\em int}_{X^\star}Z\subseteq \lambda_{{\mathcal P}} X$.
\end{lemma}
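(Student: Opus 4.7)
My plan is to fix $t\in\mbox{int}_{X^\star}Z$ and to exhibit a cozero-set $D$ of $X$ with $t\in\mbox{int}_{\beta X}\mbox{cl}_{\beta X}D$ and $\mbox{cl}_X D$ having ${\mathcal P}$, which places $t$ in $\lambda_{{\mathcal P}}X$ by definition. Since $X$ is locally compact, $X^\star$ is closed in $\beta X$, so $\mbox{int}_{X^\star}Z=U'\cap X^\star$ for some $U'$ open in $\beta X$. A Urysohn function $f\colon\beta X\to\mathbf{I}$ with $f(t)=0$ and $f|(\beta X\setminus U')\equiv 1$, together with the choice $D=X\cap f^{-1}([0,1/2))$, yields via Lemma \ref{HYFU} that $t\in\mbox{int}_{\beta X}\mbox{cl}_{\beta X}D$, and also the crucial side-condition $\mbox{cl}_{\beta X}D\cap X^\star\subseteq f^{-1}([0,1/2])\cap X^\star\subseteq Z$. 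The remaining task is to show $\mbox{cl}_X D$ has ${\mathcal P}$.

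A preliminary I would invoke in both cases: combining local compactness and local-${\mathcal P}$ in the Tychonoff space $X$, every $x\in X$ has a cozero-set neighborhood whose closure in $X$ is both compact \emph{and} possesses ${\mathcal P}$ (intersect a cozero-set with compact closure with one whose closure has ${\mathcal P}$, and invoke closed-hereditariness).

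Under hypothesis (1), I would cover $X$ by such cozero-sets, extract a locally finite open refinement $\{V_\beta\}$ via paracompactness, note that each $\mbox{cl}_X V_\beta$ has ${\mathcal P}$, and observe that the family $\{\mbox{cl}_X V_\beta\cap\mbox{cl}_X D\}_\beta$ is a locally finite closed cover of $\mbox{cl}_X D$ by ${\mathcal P}$-subsets (using that the closures of a locally finite family are themselves locally finite). The assumed preservation of ${\mathcal P}$ under locally finite closed sums then finishes this case directly.

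Under hypothesis (2) the zero-set structure of $Z$ becomes essential. Write $Z=Z(g)$ with $g\colon\beta X\to\mathbf{I}$ continuous; since $X\cap Z=\emptyset$, $g>0$ on $X$. For $n\in\mathbf{N}$ set $M_n=\mbox{cl}_X D\cap g^{-1}([1/n,1])$. The key computation is
\[
\mbox{cl}_{\beta X}M_n\cap X^\star\subseteq\bigl(\mbox{cl}_{\beta X}D\cap X^\star\bigr)\cap g^{-1}([1/n,1])\subseteq Z\cap g^{-1}([1/n,1])=\emptyset,
\]
so $\mbox{cl}_{\beta X}M_n\subseteq X$ and $M_n$ is therefore compact. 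Each compact $M_n$ is covered by finitely many of the preliminary cozero-sets, so $M_n$ has ${\mathcal P}$ (closed-hereditariness applied to a finite closed sum, the latter being implied by countable closed sums). Since $g>0$ on $X$, $\mbox{cl}_X D=\bigcup_n M_n$, and preservation under countable closed sums completes this case. The main obstacle is precisely this compactness argument for the $M_n$: it is what genuinely exploits the fact that $Z$ is a \emph{zero-set} (and not merely closed) of $\beta X$, by furnishing the scalar function $g$ whose nested sublevel structure gives a canonical countable exhaustion of $\mbox{cl}_X D$ by pieces whose $\beta X$-closures avoid $X^\star$.
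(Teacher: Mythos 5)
Your proof is correct, but it follows a genuinely different route from the paper's. The paper first invokes Lemma \ref{KLB} and compactness of $X^\star$ to produce a single zero-set $T$ of $X$ with $t\in\mbox{int}_{\beta X}\mbox{cl}_{\beta X}T$ and $\mbox{cl}_{\beta X}T\backslash X\subseteq Z$, proves that $T$ has ${\mathcal P}$, and then needs Lemma \ref{FJHSG} to translate the zero-set formulation back into the cozero-set definition of $\lambda_{{\mathcal P}}X$; you instead manufacture a cozero-set $D$ directly with one Urysohn function, so that Lemma \ref{HYFU} and the definition of $\lambda_{{\mathcal P}}X$ apply immediately and both Lemmas \ref{KLB} and \ref{FJHSG} are bypassed. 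The engine of case (2) is the same in both arguments --- the scalar function defining $Z$ yields a countable exhaustion by compacta lying inside $X$, each covered by finitely many ${\mathcal P}$-neighborhoods --- but you exhaust $\mbox{cl}_XD$ by the sets $M_n=\mbox{cl}_XD\cap g^{-1}([1/n,1])$, whereas the paper exhausts $\mbox{cl}_{\beta X}T\backslash Z$ by the sets $\mbox{cl}_{\beta X}T\backslash f^{-1}([0,1/k))$; your version is slightly cleaner because the pieces $M_n$ are already closed subsets of the space whose property is being established. The most interesting divergence is in case (1): your argument there never touches $Z$ (nor the compactness of the preliminary neighborhoods) --- it is the standard local-to-global paracompactness argument, and it actually proves that $X$ itself has ${\mathcal P}$ under hypothesis (1), whence $\lambda_{{\mathcal P}}X=\beta X$ and the conclusion is automatic. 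This is a legitimate shortcut, and it incidentally exposes that hypothesis (1) carries content only for properties ${\mathcal P}$ not already forced by paracompactness together with local-${\mathcal P}$; the paper's case (1), which refines the specific countable cover $\{X\backslash T\}\cup\{U_{x_i^k}\}$, obscures this. Two trivial remarks: the compactness of the closures in your preliminary step is never used (compactness of the $M_n$ comes from $\mbox{cl}_{\beta X}M_n\subseteq X$, not from the covering sets), and your reduction of finite closed sums to countable ones is fine since a finite family is countable.
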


\begin{proof}
Let $t\in \mbox{int}_{X^\star}Z$. Since by Lemma  \ref{KLB} we have
\[\bigcap\big\{T^\star:T\in {\mathscr Z}(X)\mbox{ and }t\in\mbox{int}_{\beta X}\mbox{cl}_{\beta X}T\big\}=\{t\}\subseteq\mbox{int}_{X^\star}Z\]
by compactness of $X^\star$ (as  $X$ is locally compact),  it follows that
\[\bigcap_{i=1}^n T^\star_i\subseteq\mbox{int}_{X^\star}Z\]
for some  $T_1,\ldots,T_n\in {\mathscr Z}(X)$ such that $t\in \mbox{int}_{\beta X}\mbox{cl}_{\beta X}T_i$ for $i=1,\ldots,n$.
Then
\begin{equation}\label{KJUJB}
t\in \bigcap_{i=1}^n\mbox{int}_{\beta X}\mbox{cl}_{\beta X}T_i=\mbox{int}_{\beta X}\Big(\bigcap_{i=1}^n\mbox{cl}_{\beta X}T_i\Big)=\mbox{int}_{\beta X}\mbox{cl}_{\beta X}\Big(\bigcap_{i=1}^nT_i\Big)=\mbox{int}_{\beta X}\mbox{cl}_{\beta X}T
\end{equation}
where
\[T=\bigcap_{i=1}^n T_i\in {\mathscr Z}(X).\]
Note that
\begin{eqnarray*}
\mbox{cl}_{\beta X}T\backslash X=\mbox{cl}_{\beta X}\Big(\bigcap_{i=1}^nT_i\Big)\backslash X&=&\Big(\bigcap_{i=1}^n\mbox{cl}_{\beta X}T_i\Big)\backslash X\\&=&\bigcap_{i=1}^n(\mbox{cl}_{\beta X}T_i\backslash X)=\bigcap_{i=1}^nT^\star_i\subseteq\mbox{int}_{X^\star}Z\subseteq Z.
\end{eqnarray*}
Therefore $\mbox{cl}_{\beta X}T\backslash Z\subseteq X$. Let $Z=Z(f)$ for some continuous $f:\beta X\rightarrow\mathbf{I}$. Let $k\in \mathbf{N}$. Then
\begin{equation}\label{JUJB}
\mbox{cl}_{\beta X}T\backslash f^{-1}\big([0,1/k)\big)\subseteq \mbox{cl}_{\beta X}T\backslash Z \subseteq X.
\end{equation}
Since $X$ is  locally-${\mathcal P}$, for each $x\in X$ there is an open neighborhood $U_x$ of $x$ in $X$ such that $\mbox{cl}_X U_x$
has ${\mathcal P}$. By compactness, from (\ref{JUJB}) it follows that
\[\mbox{cl}_{\beta X}T\backslash f^{-1}\big([0,1/k)\big)\subseteq \bigcup_{i=1}^{j_k}U_{x_i^k}\]
for some $j_k\in \mathbf{N}$ and some $x^k_1,\ldots,x^k_{j_k}\in X$. Then
\begin{eqnarray*}
T\subseteq\mbox{cl}_{\beta X}T\backslash Z=\mbox{cl}_{\beta X}T\backslash Z(f)&=&\mbox{cl}_{\beta X}T\backslash \bigcap_{k=1}^{\infty}f^{-1}\big([0,1/k)\big)\\&=&\bigcup_{k=1}^{\infty}\big(\mbox{cl}_{\beta X}T\backslash f^{-1}\big([0,1/k)\big)\big)\subseteq\bigcup_{k=1}^{\infty}\bigcup_{i=1}^{j_k}U_{x_i^k}.
\end{eqnarray*}
We show that  $T$  has ${\mathcal P}$. Consider the following cases:

\begin{description}
\item[{\sc Case (1)}] {\em $X$ is paracompact and ${\mathcal P}$ is preserved under locally finite closed sums of subspaces.} The open cover
\[{\mathscr U}=\{X\backslash T\}\cup\{U_{x_i^k}:k\in {\mathbf N}\mbox{ and }i=1,\ldots,j_k\}\]
of $X$ has a locally finite closed refinement ${\mathscr F}$. Now, for each $F\in {\mathscr F}$ for which  $F\cap T\neq\emptyset$, we have
$F\subseteq U_{x_i^k}$ for some $k\in {\mathbf N}$  and some $i=1,\ldots,j_k$. Thus $F$ has ${\mathcal P}$, as it is closed in $\mbox{cl}_XU_{x_i^k}$. Therefore
\[G=\bigcup\{F\in {\mathscr F}:F\cap T\neq\emptyset\}\]
has  ${\mathcal P}$. Since $T$ is  closed in $G$, this now implies that $T$ has ${\mathcal P}$.
\item[{\sc Case (2)}] {\em ${\mathcal P}$ is preserved under countable closed sums of subspaces.} In this case, note that if we let \[G=\bigcup_{k=1}^{\infty}\bigcup_{i=1}^{j_k}\mbox{cl}_XU_{x_i^k}\]
then $G$ has ${\mathcal P}$, and thus, its closed subset $T$ also has ${\mathcal P}$.
\end{description}
Thus $T$ has ${\mathcal P}$ in either cases. By Lemma \ref{FJHSG}  we have $\mbox{int}_{\beta X}\mbox{cl}_{\beta X}T\subseteq\lambda_{{\mathcal P}} X$, and therefore by (\ref{KJUJB}), it follows that $t\in \lambda_{{\mathcal P}} X$. This shows that $\mbox{int}_{X^\star}Z\subseteq\lambda_{{\mathcal P}} X$.
\end{proof}

The following  generalizes Theorem 4.7 of  \cite{Ko2}.

\begin{theorem}\label{JHDSB}
Let $X$ be a locally compact locally-${\mathcal P}$ non-${\mathcal P}$  space, where ${\mathcal P}$ is a  closed hereditary Mr\'{o}wka topological property. Suppose that either
\begin{itemize}
\item[\rm(1)] $X$ is paracompact and ${\mathcal P}$ is preserved under locally finite closed sums of subspaces, or
\item[\rm(2)] ${\mathcal P}$ is preserved under countable closed sums of subspaces.
\end{itemize}
Then $({\mathscr E}_{{\mathcal P}}(X),\leq)$ contains an anti-order-isomorphic copy  of  $({\mathscr E}^*(X),\leq)$.
\end{theorem}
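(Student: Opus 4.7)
The plan is to transport the desired embedding to the compact-set side via the anti-order-isomorphism $\Theta_X$. Set $B=\beta X\backslash\lambda_{{\mathcal P}}X$. Since $X$ is locally-${\mathcal P}$, Lemma \ref{JHG} gives $X\subseteq\lambda_{{\mathcal P}}X$, so $B\subseteq X^\star$; since $X$ fails ${\mathcal P}$, Lemma \ref{KJHG} gives that $\lambda_{{\mathcal P}}X$ is non-compact, whence $\lambda_{{\mathcal P}}X\neq\beta X$ and $B$ is nonempty. Clearly $B$ is compact. By Lemma \ref{JYYH}, the image $\Theta_X({\mathscr E}^*(X))$ consists precisely of the nonempty zero-sets of $\beta X$ contained in $X^\star$. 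On this image I define
\[\psi(Z)=\big(X^\star\backslash\mbox{int}_{X^\star}Z\big)\cup B,\]
which is a nonempty compact subset of $X^\star$ containing $B$, and hence lies in $\Theta_X({\mathscr E}_{{\mathcal P}}(X))$ by Lemma \ref{JHKFH}. I then take $\Phi=\Theta_X^{-1}\circ\psi\circ\Theta_X:{\mathscr E}^*(X)\rightarrow{\mathscr E}_{{\mathcal P}}(X)$.

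The heart of the proof is showing that $\psi$ is a strict anti-order-embedding, that is,
\[Z_1\subseteq Z_2\ \Longleftrightarrow\ \psi(Z_2)\subseteq\psi(Z_1).\]
The forward direction is immediate from monotonicity of $X^\star$-interiors and complementation. For the converse, $\psi(Z_2)\subseteq\psi(Z_1)$ combined with $B\subseteq\psi(Z_i)$ reduces (by restricting attention to the $\lambda_{{\mathcal P}}X$-portion of both sides) to the pointwise implication
\[\mbox{int}_{X^\star}Z_1\cap\lambda_{{\mathcal P}}X\subseteq\mbox{int}_{X^\star}Z_2.\]
Here the two earlier structural results do the real work. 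Lemma \ref{JDSB} -- this is exactly where hypothesis (1) or (2) on ${\mathcal P}$ enters -- guarantees $\mbox{int}_{X^\star}Z_i\subseteq\lambda_{{\mathcal P}}X$, so the displayed inclusion simplifies to $\mbox{int}_{X^\star}Z_1\subseteq\mbox{int}_{X^\star}Z_2$; and Lemma \ref{HSJHG} says each $Z_i$, being a zero-set of $\beta X$ missing $X$, is regular-closed in $X^\star$, so taking $X^\star$-closures of both sides yields $Z_1\subseteq Z_2$.

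Composing the three anti-order-isomorphisms $\Theta_X$, $\psi$ (onto its image), and $\Theta_X^{-1}$ produces an anti-order-embedding $\Phi:({\mathscr E}^*(X),\leq)\rightarrow({\mathscr E}_{{\mathcal P}}(X),\leq)$, which is the required anti-order-isomorphic copy. The main obstacle is precisely the converse direction for $\psi$: the naive map $Z\mapsto Z\cup B$ is order-preserving but fails to be an order-embedding, since a point of $Z_1\backslash Z_2$ sitting in $B$ would be invisible on the $\psi$-side. The asymmetric definition $\psi(Z)=(X^\star\backslash\mbox{int}_{X^\star}Z)\cup B$ is essentially forced by the requirement that $Z$ be recoverable from $\psi(Z)$ using exactly the two structural facts Lemmas \ref{HSJHG} and \ref{JDSB} supply: regular-closedness in $X^\star$ and $\lambda_{{\mathcal P}}X$-containment of the $X^\star$-interior.
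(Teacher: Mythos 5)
Your proposal is correct and follows essentially the same route as the paper: the map $Y\mapsto\Theta_X^{-1}(X^\star\backslash\mbox{int}_{X^\star}\Theta_X(Y))$, well-definedness via Lemmas \ref{JYYH}, \ref{JDSB}, \ref{JHG} and \ref{JHKFH}, and injectivity-with-order-reflection via the regular-closedness from Lemma \ref{HSJHG}. The only cosmetic difference is that your explicit union with $B=\beta X\backslash\lambda_{{\mathcal P}}X$ is redundant, since Lemma \ref{JDSB} already forces $B\subseteq X^\star\backslash\mbox{int}_{X^\star}Z$ --- which is exactly how the paper phrases it.
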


\begin{proof}
Let $Y\in{\mathscr E}^*(X)$. By Lemma \ref{JYYH}  we have $\Theta_X(Y) \in {\mathscr Z}(\beta X)$ and  $X\cap \Theta_X(Y)=\emptyset$. By Lemma
\ref{JDSB}, it then follows that  $\mbox{int}_{X^\star}\Theta_X(Y) \subseteq \lambda_{{\mathcal P}} X$. Now, by Lemma \ref{JHG} we have $X\subseteq \lambda_{{\mathcal P}} X$, as $X$ is
locally-${\mathcal P}$. Therefore
\[\beta X\backslash\lambda_{{\mathcal P}} X\subseteq X^\star\cap\big(\beta X\backslash\mbox{int}_{X^\star}\Theta_X(Y)\big)=X^\star\backslash\mbox{int}_{X^\star}\Theta_X(Y).\]
Note that the latter set is  compact (as it is closed in $X^\star$ and $X^\star$ is compact, as $X$ is locally compact) and non-empty  (as $\lambda_{{\mathcal P}} X\neq\beta X$, as $\lambda_{{\mathcal P}} X$ is non-compact, as $X$ is non-$\mathcal{P}$; see Lemma \ref{KJHG}). Thus, by Lemma \ref{JHKFH}, the function
\[F:\big({\mathscr E}^*(X),\leq\big)\rightarrow\big({\mathscr E}_{{\mathcal P}}(X),\leq\big)\]
defined by
\[F(Y)=\Theta_X^{-1}\big(X^\star\backslash\mbox{int}_{X^\star}\Theta_X(Y)\big)\]
for $Y\in {\mathscr E}^*(X)$ is well-defined. The fact that $F$ is an anti-order-homomorphism is straightforward. To complete the proof, note that if
\[\Theta_X^{-1}\big(X^\star\backslash\mbox{int}_{X^\star}\Theta_X(Y_1)\big)=F(Y_1)\leq F(Y_2)=\Theta_X^{-1}\big(X^\star\backslash\mbox{int}_{X^\star}\Theta_X(Y_2)\big)\]
for  $Y_1,Y_2\in{\mathscr E}^*(X)$, then (since $\Theta_X$ is an anti-order-isomorphism) we have
\[X^\star\backslash\mbox{int}_{X^\star}\Theta_X(Y_1)\supseteq X^\star\backslash\mbox{int}_{X^\star}\Theta_X(Y_2)\]
and therefore $\mbox{int}_{X^\star}\Theta_X(Y_1)\subseteq \mbox{int}_{X^\star}\Theta_X(Y_2)$. Using Lemma \ref{HSJHG}, it then follows that
\[\Theta_X(Y_1)=\mbox{cl}_{X^\star}\mbox{int}_{X^\star}\Theta_X(Y_1)\subseteq\mbox{cl}_{X^\star}\mbox{int}_{X^\star}\Theta_X(Y_2)=\Theta_X(Y_2)\]
which implies that $Y_1\geq Y_2$. Thus, $F$ is an  anti-order-isomorphism (onto its image).
\end{proof}

\begin{remark}\label{LJG}
The list of topological properties ${\mathcal P}$ satisfying the assumption of Theorem \ref{JHDSB} is quite  wide and include  all
topological properties (1)--(10) introduced in Example \ref{JHL}. Note that properties (1), (4), (7)--(10) are  preserved under countable closed sums of
subspaces and properties (2)--(6) and (8) are preserved under locally finite closed sums of subspaces (see Theorems 7.3 and 7.4 of \cite{Bu}).
Also, they  are all  hereditary with respect to  closed subsets (see Theorem 7.1 of \cite{Bu}).
\end{remark}

\section{Extensions and restrictions of order-isomorphisms between  sets of one-point ${\mathcal P}$-extensions}

In this section we consider order-isomorphisms between various sets of one-point ${\mathcal P}$-extensions. The results of this section are partly motivated by their applications in the next section.

\begin{lemma}\label{KJGT}
Let $X$ be a Tychonoff locally-${\mathcal P}$ non-${\mathcal P}$  space,  where  ${\mathcal P}$ is either pseudocompactness, or a
closed hereditary Mr\'{o}wka topological property, which is preserved under  finite closed sums of subspaces. Then
\begin{itemize}
\item[\rm(1)] $({\mathscr E}_{{\mathcal P}}(X),\leq)$ has a unique largest element, namely $M=\Theta_X^{-1}(\beta X\backslash\lambda_{{\mathcal P}} X)$.
\item[\rm(2)] For every non-empty collection $\{Y_i\}_{i\in I}\subseteq{\mathscr E}_{{\mathcal P}}(X)$, the least upper bound $\bigvee_{i\in I}Y_i$ exists in $({\mathscr E}_{{\mathcal P}}(X),\leq)$ and
    \[\bigvee_{i\in I}Y_i=\Theta_X^{-1}\Big(\bigcap_{i\in I}\Theta_X(Y_i)\Big).\]
\item[\rm(3)] Further, let $X$ be locally compact. Then for every non-empty countable collection $\{Y_i\}_{i\in I}\subseteq{\mathscr E}^C_{{\mathcal P}}(X)$, the least upper bound $\bigvee_{i\in I}Y_i$ exists in $({\mathscr E}^C_{{\mathcal P}}(X),\leq)$ and
    \[\bigvee_{i\in I}Y_i=\Theta_X^{-1}\Big(\bigcap_{i\in I}\Theta_X(Y_i)\Big).\]
\end{itemize}
\end{lemma}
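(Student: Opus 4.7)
My approach is to use the anti-order-isomorphism $\Theta_X$ of Theorem \ref{JHV} to translate each claim about least upper bounds in ${\mathscr E}_{{\mathcal P}}(X)$ (or ${\mathscr E}^C_{{\mathcal P}}(X)$) into a claim about greatest lower bounds inside $\Theta_X({\mathscr E}_{{\mathcal P}}(X))$, ordered by $\subseteq$. Since $\Theta_X$ reverses order, suprema go to infima; and in a poset of sets ordered by containment, infima are simply intersections, provided the intersection still lies in the poset. The explicit description of the image
\[\Theta_X\bigl({\mathscr E}_{{\mathcal P}}(X)\bigr)=\bigl\{C\subseteq\beta X\backslash X:C\mbox{ compact, non-empty, }\beta X\backslash\lambda_{{\mathcal P}} X\subseteq C\bigr\}\]
given by Lemma \ref{JHKFH} is what drives the whole argument.

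For part (1), the smallest element of the right-hand side above with respect to $\subseteq$ is $\beta X\backslash\lambda_{{\mathcal P}} X$ itself, so its preimage under $\Theta_X$ is the required largest element, provided that this set actually belongs to the image. Compactness of $\beta X\backslash\lambda_{{\mathcal P}} X$ is automatic (closed in $\beta X$), and inclusion in $\beta X\backslash X$ follows from Lemma \ref{JHG} since local-${\mathcal P}$-ness yields $X\subseteq\lambda_{{\mathcal P}} X$. Non-emptiness must be handled separately for the two cases. If ${\mathcal P}$ is a closed hereditary Mr\'{o}wka property preserved under finite closed sums, Lemma \ref{KJHG} gives it at once: non-${\mathcal P}$-ness of $X$ makes $\lambda_{{\mathcal P}} X$ non-compact, so $\lambda_{{\mathcal P}} X\neq\beta X$. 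If ${\mathcal P}$ is pseudocompactness, I would use Lemma \ref{JIUH} to rewrite $\lambda_{{\mathcal P}} X=\mbox{int}_{\beta X}\upsilon X$; if this coincided with $\beta X$, then by density of $\upsilon X$ in $\beta X$ one gets $\upsilon X=\beta X$, i.e.\ $X$ pseudocompact, a contradiction.

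For part (2), the intersection $C=\bigcap_{i\in I}\Theta_X(Y_i)$ is closed in $\beta X$ and contained in $\beta X\backslash X$, hence compact; it contains $\beta X\backslash\lambda_{{\mathcal P}} X$ since each factor does; and by part (1) this makes it non-empty. Therefore $C$ belongs to $\Theta_X({\mathscr E}_{{\mathcal P}}(X))$, so $\Theta_X^{-1}(C)\in{\mathscr E}_{{\mathcal P}}(X)$, and the anti-order-isomorphism property formally identifies $\Theta_X^{-1}(C)$ with $\bigvee_{i\in I}Y_i$.

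For part (3), the extra point to check is that $C$ lies in $\Theta_X({\mathscr E}^C_{{\mathcal P}}(X))$, which by Lemma \ref{HDJYYH} means verifying that $C\in{\mathscr Z}(\beta X\backslash X)$. Each $\Theta_X(Y_i)$ is such a zero-set, say $Z(g_i)$ with $g_i\colon\beta X\backslash X\to\mathbf{I}$ continuous, and a countable intersection of zero-sets is a zero-set via the standard weighted-sum trick $\sum 2^{-i}g_i$; this is exactly where countability is used. Local compactness of $X$ makes $\beta X\backslash X$ compact, so zero-sets of $\beta X\backslash X$ are automatically compact, and the argument of part (2) applies verbatim inside ${\mathscr E}^C_{{\mathcal P}}(X)$. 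I do not expect any single step to be hard; the main care needed is to keep the pseudocompactness branch and the Mr\'{o}wka branch of the hypothesis on ${\mathcal P}$ simultaneously visible, since the invocations of Lemmas \ref{JHKFH}, \ref{JIUH} and \ref{KJHG} handle the two cases by parallel but formally distinct arguments.
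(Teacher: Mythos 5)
Your proposal is correct and follows essentially the same route as the paper, whose proof simply cites Lemmas \ref{HDJYYH} and \ref{JHKFH} together with the observation that $\beta X\backslash\lambda_{{\mathcal P}} X\neq\emptyset$; you have merely filled in the details the paper leaves implicit. One small citation point: for the inclusion $X\subseteq\lambda_{{\mathcal P}} X$ in the pseudocompactness branch you should invoke Lemmas \ref{JIUH} and \ref{HGJHG} rather than Lemma \ref{JHG}, since the latter assumes ${\mathcal P}$ is closed hereditary and pseudocompactness is not.
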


\begin{proof}
These  easily follow from Lemmas  \ref{HDJYYH} and  \ref{JHKFH}. Note that since $X$ is locally-${\mathcal P}$ and non-${\mathcal P}$, the set $\lambda_{{\mathcal P}} X$
is non-compact (this follows from Lemmas \ref{JIUH}
and \ref{HGJHG} in the case  when  ${\mathcal P}$ is pseudocompactness, and in the other case, it follows from Lemma \ref{KJHG}) and thus
$\beta X\backslash\lambda_{{\mathcal P}} X\neq\emptyset$.
\end{proof}

\begin{notation}\label{HGTTDR}
{\em For a   space $X$  and a topological property ${\mathcal P}$, we denote by $M^X_{{\mathcal P}}$ the largest element of
$({\mathscr E}_{{\mathcal P}}(X),\leq)$ (provided it actually exists).}
\end{notation}

\begin{lemma}\label{HGTIITD}
Let $X$  be a locally compact locally-${\mathcal P}$ non-${\mathcal P}$  space,  where  ${\mathcal P}$ is either pseudocompactness, or a
closed hereditary Mr\'{o}wka topological property, which is preserved under  finite closed sums of subspaces.
For $Y\in{\mathscr E}_{{\mathcal P}}(X)$ the following are equivalent:
\begin{itemize}
\item[\rm(1)] $\Theta_X(Y)\cap\lambda_{{\mathcal P}} X$ is compact.
\item[\rm(2)] For every collection $\{Y_i\}_{i\in I}\subseteq{\mathscr E}_{{\mathcal P}}(X)$ such that $Y\vee\bigvee_{i\in I}Y_i=M^X_{{\mathcal P}}$, we have $Y\vee\bigvee_{j=1}^k Y_{i_j}=M^X_{{\mathcal P}}$ for some $i_1,\ldots,i_k\in I$.
\end{itemize}
\end{lemma}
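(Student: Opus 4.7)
The plan is to push everything through the anti-order-isomorphism $\Theta_X$ and reduce the equivalence to a statement about compactness of the set
\[
D := \Theta_X(Y) \cap \lambda_{\mathcal{P}} X.
\]
Writing $C = \Theta_X(Y)$, $C_i = \Theta_X(Y_i)$, and $K := \beta X \setminus \lambda_{\mathcal{P}} X$, Lemma \ref{KJGT} identifies $\Theta_X(M^X_{\mathcal{P}}) = K$ and converts non-empty joins in $\mathscr{E}_{\mathcal{P}}(X)$ into intersections of their $\Theta_X$-images. Since Lemma \ref{JHKFH} forces $K \subseteq C_i$ for each $i$, the equation $Y \vee \bigvee_i Y_i = M^X_{\mathcal{P}}$ becomes $C \cap \bigcap_i C_i = K$, which in turn is equivalent to $D \subseteq \bigcup_i U_i$, where $U_i := \beta X \setminus C_i$ is an open subset of $\beta X$ satisfying $X \subseteq U_i \subseteq \lambda_{\mathcal{P}} X$.

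Direction (1) $\Rightarrow$ (2) is then immediate: if $D$ is compact, any such open cover of $D$ admits a finite subcover, which upon translation back through $\Theta_X$ yields the required finite $i_1,\ldots,i_k$.

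For the converse I argue contrapositively. Assume $D$ is non-compact. Because $D$ is closed in the locally compact open subset $\lambda_{\mathcal{P}} X$ of $\beta X$, regularity of $\beta X$ allows me to pick, for each $t \in D$, an open neighborhood $V_t$ of $t$ in $\beta X$ with $\mbox{cl}_{\beta X} V_t$ compact and contained in $\lambda_{\mathcal{P}} X$. The resulting cover $\{V_t : t \in D\}$ of $D$ has no finite subcover, since any such subcover would embed $D$ as a closed subset of a compact subset of $\lambda_{\mathcal{P}} X$ and force $D$ to be compact. Now I exploit the hypothesis that $X$ is locally compact: $X$ is then open in $\beta X$, so $U_t := X \cup V_t$ is open, and its complement $C_t := \beta X \setminus U_t$ is closed, disjoint from $X$, and contains $K$ (using $X \subseteq \lambda_{\mathcal{P}} X$ from Lemma \ref{JHG}); it is non-empty because $K$ is (by Lemmas \ref{JIUH}, \ref{HGJHG}, and \ref{KJHG} together with the non-$\mathcal{P}$ hypothesis). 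Lemma \ref{JHKFH} then yields $Y_t := \Theta_X^{-1}(C_t) \in \mathscr{E}_{\mathcal{P}}(X)$, and the reduction of the first paragraph immediately shows $Y \vee \bigvee_{t \in D} Y_t = M^X_{\mathcal{P}}$ while no finite subfamily can achieve equality, contradicting (2).

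The main obstacle is the engineering of this cover: the open sets $U_i$ witnessing a failure of (2) must arise as complements of members of $\Theta_X(\mathscr{E}_{\mathcal{P}}(X))$, and this is precisely what dictates the construction $U_t = X \cup V_t$ and the combined use of $X$ being open in $\beta X$ and contained in $\lambda_{\mathcal{P}} X$.
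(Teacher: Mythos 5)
Your proposal is correct and follows essentially the same route as the paper: both directions are handled by passing through $\Theta_X$, using Lemmas \ref{KJGT} and \ref{JHKFH} to turn the join condition $Y\vee\bigvee_i Y_i=M^X_{\mathcal P}$ into the covering condition $\Theta_X(Y)\cap\lambda_{\mathcal P}X\subseteq\bigcup_i\bigl(\beta X\backslash\Theta_X(Y_i)\bigr)$, with complements of members of $\Theta_X({\mathscr E}_{\mathcal P}(X))$ playing the role of the open sets. The only (cosmetic) difference is that for (2) $\Rightarrow$ (1) the paper argues directly with an arbitrary open cover of $\Theta_X(Y)\cap\lambda_{\mathcal P}X$ in $\beta X\backslash X$, whereas you argue contrapositively by manufacturing a cover by relatively compact neighborhoods inside $\lambda_{\mathcal P}X$ with no finite subcover; both are valid and rest on the same correspondence.
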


\begin{proof}
(1) {\em  implies} (2). Let $\{Y_i\}_{i\in I}\subseteq{\mathscr E}_{{\mathcal P}}(X)$ be such
that $Y\vee\bigvee_{i\in I}Y_i=M^X_{{\mathcal P}}$. Then by Lemma \ref{KJGT} we have
\[\Theta_X(Y)\cap\bigcap_{i\in I}\Theta_X(Y_i)=\Theta_X\Big(Y\vee\bigvee_{i\in I}Y_i\Big)=\Theta_X(M^X_{{\mathcal P}})=\beta X\backslash\lambda_{{\mathcal P}} X\]
and thus
\[\lambda_{{\mathcal P}} X\cap\Theta_X(Y)\cap\bigcap_{i\in I}\Theta_X(Y_i)=\emptyset.\]
Now, by compactness of $\Theta_X(Y)\cap\lambda_{{\mathcal P}} X$ it follows that
\[\lambda_{{\mathcal P}} X\cap\Theta_X(Y)\cap\bigcap_{j=1}^k\Theta_X(Y_{i_j})=\emptyset\]
for some $i_1,\ldots,i_k\in I$. Therefore, using  Lemma \ref{KJGT} (note that by Lemma  \ref{JHKFH} each
element of $\Theta_X({\mathscr E}_{{\mathcal P}}(X))$ contains $\beta X\backslash\lambda_{{\mathcal P}} X$) we have
\[Y\vee\bigvee_{j=1}^k\Theta_X(Y_{i_j})=\Theta_X^{-1}\big(\Theta_X(Y)\cap\bigcap_{j=1}^k\Theta_X(Y_{i_j})\big)
=\Theta_X^{-1}(\beta X\backslash\lambda_{{\mathcal P}} X)=M^X_{{\mathcal P}}.\]

(2) {\em  implies} (1).  Let  $\{U_i\}_{i\in I}$  be an open cover of $\Theta_X(Y)\cap\lambda_{{\mathcal P}} X$
in $\beta X\backslash X$. Note that, since $X$  is locally-${\mathcal P}$ we have $X\subseteq\lambda_{{\mathcal P}} X$
(see Lemmas \ref{HGJHG} and \ref{JHG}), and since $X$ is non-${\mathcal P}$ we have $\beta X\backslash\lambda_{{\mathcal P}} X\neq\emptyset$
(see Lemma \ref{KJHG}, and  when ${\mathcal P}$ is pseudocompactness,
see Lemma \ref{JIUH}). For each $i\in I$ the set $(\beta X\backslash X)\backslash(U_i\cap\lambda_{{\mathcal P}} X)$ is compact (as it is closed in  $\beta X\backslash X$ and the latter is compact, as  $X$ is locally compact) and  it is non-empty (as it contains
$\beta X\backslash\lambda_{{\mathcal P}} X$). By Lemma  \ref{JHKFH} for each $i\in I$ we have
\[\Theta_X^{-1}\big((\beta X\backslash X)\backslash(U_i\cap\lambda_{{\mathcal P}} X)\big)=Y_i\]
for some $Y_i\in{\mathscr E}_{{\mathcal P}}(X)$. Now
\begin{eqnarray*}
\Theta_X(Y)\cap\bigcap_{i\in I}\Theta_X(Y_i)&=&\Theta_X(Y)\cap\bigcap_{i\in I}\big((\beta X\backslash X)
\backslash(U_i\cap\lambda_{{\mathcal P}} X)\big)\\&=&\Theta_X(Y)\backslash\bigcup_{i\in I}(U_i\cap\lambda_{{\mathcal P}} X)\\&\subseteq&\Theta_X(Y)
\backslash\big(\Theta_X(Y)\cap\lambda_{{\mathcal P}} X\big)\\&=&\Theta_X(Y)\cap(\beta X\backslash\lambda_{{\mathcal P}} X)=\beta X\backslash\lambda_{{\mathcal P}} X
\end{eqnarray*}
and thus,  by Lemma \ref{KJGT} (note that by Lemma  \ref{JHKFH} each
element of $\Theta_X({\mathscr E}_{{\mathcal P}}(X))$ contains $\beta X\backslash\lambda_{{\mathcal P}} X$, therefore,  equality holds in above) we have
\[Y\vee\bigvee_{i\in I}Y_i=\Theta_X^{-1}\Big(\Theta_X(Y)\cap\bigcap_{i\in I}\Theta_X(Y_i)\Big)=\Theta_X^{-1}(\beta X\backslash\lambda_{{\mathcal P}} X)
=M^X_{{\mathcal P}}.\]
Using our assumption, it now  follows that
\[Y\vee \bigvee_{j=1}^k\Theta_X(Y_{i_j})=M^X_{{\mathcal P}}\]
for some $i_1,\ldots,i_k\in I$. Again, by Lemma \ref{KJGT} we have
\begin{eqnarray*}
\lambda_{{\mathcal P}} X\cap\Theta_X(Y)\cap\bigcap_{j=1}^k\Theta_X(Y_{i_j})&=&\lambda_{{\mathcal P}} X\cap\Theta_X^{-1}\Big(Y\vee \bigvee_{j=1}^kY_{i_j}\Big)
\\&=&\lambda_{{\mathcal P}} X\cap\Theta_X^{-1}(M^X_{{\mathcal P}})=\lambda_{{\mathcal P}} X\cap(\beta X\backslash\lambda_{{\mathcal P}} X)=\emptyset
\end{eqnarray*}
and thus, since
\begin{eqnarray*}
\big(\lambda_{{\mathcal P}} X\cap\Theta_X(Y)\big)\backslash\bigcup_{j=1}^kU_{i_j}&\subseteq&\lambda_{{\mathcal P}} X\cap\Theta_X(Y)\cap
\Big((\beta X\backslash X)\backslash\bigcup_{j=1}^k(\lambda_{{\mathcal P}} X\cap U_{i_j})\Big)\\&=&\lambda_{{\mathcal P}} X\cap\Theta_X(Y)\cap\bigcap_{j=1}^k
\big((\beta X\backslash X)\backslash(\lambda_{{\mathcal P}} X\cap U_{i_j})\big)\\&=&\lambda_{{\mathcal P}} X\cap\Theta_X(Y)\cap\bigcap_{j=1}^k\Theta_X(Y_{i_j})
\end{eqnarray*}
comparing with above, it follows that
\[\big(\lambda_{{\mathcal P}} X\cap\Theta_X(Y)\big)\backslash\bigcup_{j=1}^kU_{i_j}=\emptyset.\]
Therefore
\[\lambda_{{\mathcal P}} X\cap\Theta_X(Y)\subseteq \bigcup_{j=1}^kU_{i_j}.\]
This shows the compactness of  $\lambda_{{\mathcal P}} X\cap\Theta_X(Y)$.
\end{proof}

\begin{theorem}\label{JOOGF}
Let $X$ and $Y$ be locally compact locally-${\mathcal P}$ non-${\mathcal P}$  spaces  where  ${\mathcal P}$ is either pseudocompactness or a
closed hereditary Mr\'{o}wka topological  property which is preserved under  finite closed sums of subspaces.
Then every order isomorphism from $({\mathscr E}_{{\mathcal P}}(X),\leq)$ onto $({\mathscr E}_{{\mathcal P}}(Y),\leq)$ restricts to
an  order-isomorphism from $({\mathscr E}^C_{{\mathcal P}}(X),\leq)$ onto $({\mathscr E}^C_{{\mathcal P}}(Y),\leq)$. Conversely,
every order-isomorphism from
$({\mathscr E}^C_{{\mathcal P}}(X),\leq)$ onto $({\mathscr E}^C_{{\mathcal P}}(Y),\leq)$ extends to an order-isomorphism from
$({\mathscr E}_{{\mathcal P}}(X),\leq)$ onto $({\mathscr E}_{{\mathcal P}}(Y),\leq)$.
\end{theorem}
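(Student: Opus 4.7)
The plan is to establish an order-theoretic characterization of $\mathscr{E}^C_{\mathcal{P}}(X)$ inside $\mathscr{E}_{\mathcal{P}}(X)$ and, separately, to prove that $\mathscr{E}^C_{\mathcal{P}}(X)$ is sup-dense in $\mathscr{E}_{\mathcal{P}}(X)$. The first ingredient handles the restriction claim; the second handles the extension claim.

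For the restriction direction, by Lemma~\ref{HDJYYH} the condition $Y\in\mathscr{E}^C_{\mathcal{P}}(X)$ says that $\Theta_X(Y)$ is a zero-set of the compact Hausdorff space $\beta X\setminus X$, equivalently a closed $G_\delta$. Such a set admits a representation $\Theta_X(Y)=\bigcap_n F_n$ where $F_n$ is closed in $\beta X\setminus X$, $F_{n+1}\subseteq\mathrm{int}\,F_n$, and $F_n\supseteq\beta X\setminus\lambda_{\mathcal{P}}X$. Writing $Y_n=\Theta_X^{-1}(F_n)$, the chain $Y_1\leq Y_2\leq\cdots$ in $\mathscr{E}_{\mathcal{P}}(X)$ satisfies $\bigvee_n Y_n=Y$, and the topological inclusion $F_{n+1}\subseteq\mathrm{int}\,F_n$ can be witnessed lattice-theoretically by the existence of $Z_n\in\mathscr{E}_{\mathcal{P}}(X)$ with $Z_n\vee Y_{n+1}=M^X_{\mathcal{P}}$ and $Z_n\wedge Y_n=m_X$; here $m_X$ denotes the least element of $\mathscr{E}_{\mathcal{P}}(X)$ (the Alexandroff one-point compactification, corresponding under $\Theta_X$ to $\beta X\setminus X$ itself) and $M^X_{\mathcal{P}}$ the greatest (Lemma~\ref{KJGT}). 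The resulting characterization of $\mathscr{E}^C_{\mathcal{P}}(X)$ is purely order-theoretic, so any order-isomorphism $\phi\colon\mathscr{E}_{\mathcal{P}}(X)\to\mathscr{E}_{\mathcal{P}}(Y)$ (which must send extremal elements to extremal elements) preserves $\mathscr{E}^C_{\mathcal{P}}$ and hence restricts as required.

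For the extension direction, sup-density follows from the classical fact that in a compact Hausdorff (hence normal) space, every closed set equals the intersection of all zero-sets containing it: given $t\notin\Theta_X(Z)$, normality yields a continuous $[0,1]$-valued function separating $\Theta_X(Z)$ from $t$, whose zero-set contains $\Theta_X(Z)$ but misses $t$. Each such zero-set automatically contains $\beta X\setminus\lambda_{\mathcal{P}}X$, so by Lemma~\ref{JHKFH} it equals $\Theta_X(Y)$ for some $Y\in\mathscr{E}^C_{\mathcal{P}}(X)$ with $Y\leq Z$; thus $Z=\bigvee\{Y\in\mathscr{E}^C_{\mathcal{P}}(X):Y\leq Z\}$ (Lemma~\ref{KJGT}). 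Given an order-isomorphism $\phi\colon\mathscr{E}^C_{\mathcal{P}}(X)\to\mathscr{E}^C_{\mathcal{P}}(Y)$, define $\tilde\phi(Z)=\bigvee\phi(\{Y\in\mathscr{E}^C_{\mathcal{P}}(X):Y\leq Z\})$; this is order-preserving by construction and extends $\phi$, since whenever $Z\in\mathscr{E}^C_{\mathcal{P}}(X)$ the set being joined has maximum $Z$. Applying the symmetric construction to $\phi^{-1}$ produces an inverse, so $\tilde\phi$ is an order-isomorphism.

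The main obstacle will be justifying the lattice-theoretic witness of the interior inclusion $F_{n+1}\subseteq\mathrm{int}\,F_n$ used in the first step. The subtlety is that every element of $\mathscr{E}_{\mathcal{P}}(X)$ has $\Theta_X$-image containing the fixed nonempty set $B=\beta X\setminus\lambda_{\mathcal{P}}X$, so one cannot directly encode topological disjointness from $B$ using the lattice alone: the natural witness condition $\Theta_X(Z_n)\cap F_{n+1}=B$ (in place of the unattainable $\Theta_X(Z_n)\cap F_{n+1}=\emptyset$) yields only $F_{n+1}\setminus B\subseteq\mathrm{int}\,F_n$. One must verify that in the chain converging to a \v{C}ech-complete $\Theta_X(Y)$ the containment $B\subseteq\mathrm{int}\,F_n$ holds automatically (via $B\subseteq\Theta_X(Y)=Z(g)\subseteq g^{-1}([0,1/n))\subseteq\mathrm{int}\,F_n$ for the defining continuous function $g$), so that the weaker lattice condition does suffice to recover \v{C}ech-completeness of $Y=\bigvee_n Y_n$.
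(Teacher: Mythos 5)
Your extension half is essentially the paper's argument: represent each $\Theta_X(Z)$ as an intersection of zero-sets of $\beta X\backslash X$ containing it (equivalently, $Z$ as the join of its \v{C}ech-complete minorants), push the family through the given isomorphism, and intersect. The paper fixes an arbitrary such representation and proves well-definedness by a compactness argument; your canonical choice of \emph{all} minorants sidesteps well-definedness, but the very same compactness argument is still needed at the point where you assert that the symmetric construction applied to $\phi^{-1}$ inverts $\tilde\phi$ --- to show $\tilde\psi(\tilde\phi(Z))\leq Z$ you must show that every \v{C}ech-complete minorant $V$ of $\tilde\phi(Z)$ has $\phi^{-1}(V)\leq Z$, which requires shrinking $\Theta_Y(V)$ to sets of the form $h^{-1}([0,1/n])$ and invoking compactness of $\beta Y\backslash Y$ exactly as the paper does. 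That step is asserted, not proved.

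The restriction half has a genuine gap, located exactly where you flag ``the main obstacle.'' Your proposed lattice condition --- a chain $Y_1\leq Y_2\leq\cdots$ with $\bigvee_nY_n=Y$ and witnesses $Z_n$ satisfying $Z_n\vee Y_{n+1}=M^X_{\mathcal P}$ and $Z_n\wedge Y_n=m_X$ --- is not a characterization of membership in ${\mathscr E}^C_{\mathcal P}(X)$, because, as you yourself compute, the witnesses only certify $F_{n+1}\backslash B\subseteq\mbox{int}\,F_n$, where $B=\beta X\backslash\lambda_{\mathcal P}X$. Your proposed fix addresses the wrong direction: arranging $B\subseteq\mbox{int}\,F_n$ when $Y$ is \emph{already known} to be \v{C}ech-complete only shows that \v{C}ech-complete elements satisfy the condition, whereas to conclude that $\Delta(T)$ is \v{C}ech-complete you need the converse --- that any element satisfying the condition is \v{C}ech-complete --- and that converse is false. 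Indeed, take $Y_n=M^X_{\mathcal P}$ for every $n$ (so $F_n=B$) and $Z_n=m_X$; then $Z_n\vee Y_{n+1}=M^X_{\mathcal P}$ and $Z_n\wedge Y_n=m_X$ hold trivially and $\bigvee_nY_n=M^X_{\mathcal P}$, yet $M^X_{\mathcal P}$ is \v{C}ech-complete only when $B$ happens to be a zero-set of $\beta X\backslash X$, which need not be the case. The underlying obstruction is structural: $B$ is contained in every element of $\Theta_X({\mathscr E}_{\mathcal P}(X))$ and so is invisible from below; no condition phrased in terms of joins reaching $M^X_{\mathcal P}$ and meets reaching $m_X$ can detect whether $B$ itself is a $G_\delta$. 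The paper circumvents this by never trying to characterize ``$\Theta_X(T)$ is a $G_\delta$'' directly: it instead characterizes order-theoretically when $\Theta_X(T')\cap\lambda_{\mathcal P}X$ is compact (Lemma \ref{HGTIITD}), writes $(\beta X\backslash X)\backslash\Theta_X(T)$ as a countable union of compact sets $K_n\subseteq\lambda_{\mathcal P}X\backslash X$, transfers the auxiliary elements $T_n=\Theta_X^{-1}(B\cup K_n)$ across $\Delta$ to obtain compact sets $L_n\subseteq\lambda_{\mathcal P}Y$, and then proves by a separate argument (the element $S$ with $\Theta_Y(S)=(\beta Y\backslash\lambda_{\mathcal P}Y)\cup\{s\}$) that the $L_n$ exhaust $(\beta Y\backslash Y)\backslash\Theta_Y(\Delta(T))$, making that complement $\sigma$-compact and hence $\Theta_Y(\Delta(T))$ a zero-set. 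To repair your argument you would need to replace the nested-neighborhood chain by this ``$\sigma$-compact complement'' formulation.
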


\begin{proof}
Let
\[\Delta:\big({\mathscr E}_{{\mathcal P}}(X),\leq\big)\rightarrow\big({\mathscr E}_{{\mathcal P}}(Y),\leq\big)\]
be an  order-isomorphism. Let $T\in{\mathscr E}^C_{{\mathcal P}}(X)$. By Lemmas  \ref{HDJYYH} and  \ref{JHKFH} we have
$\Theta_X(T)\in {\mathscr Z}(\beta X\backslash X)$ and $\beta X\backslash\lambda_{{\mathcal P}} X\subseteq \Theta_X(T)$. The set
$(\beta X\backslash X)\backslash \Theta_X(T)$ is a cozero-set in  $\beta X\backslash X$, and the latter is compact (as $X$ is locally compact), therefore, $(\beta X\backslash X)\backslash \Theta_X(T)$  is $\sigma$-compact. Let
\begin{equation}\label{HFSD}
(\beta X\backslash X)\backslash\Theta_X(T)=\bigcup_{n=1}^\infty K_n
\end{equation}
where $K_n$'s, for $n\in \mathbf{N}$, are compact subsets of
$\lambda_{{\mathcal P}} X\backslash X$ (as $\beta X\backslash\lambda_{{\mathcal P}} X\subseteq \Theta_X(T)$). Let
\begin{equation}\label{JKGDF}
T_n=\Theta_X^{-1}\big((\beta X\backslash\lambda_{{\mathcal P}} X)\cup K_n\big)
\end{equation}
for $n\in \mathbf{N}$. By Lemma \ref{JHKFH}, we have  $T_n\in{\mathscr E}_{{\mathcal P}}(X)$ for each $n\in \mathbf{N}$ (note that $\beta X\backslash\lambda_{{\mathcal P}} X\neq\emptyset$, as  $X$ is non-${\mathcal P}$, see Lemma \ref{KJHG}, and when  ${\mathcal P}$ is pseudocompactness,
see Lemma \ref{JIUH}). Since $\Theta_X(T_n)\cap\lambda_{{\mathcal P}} X=K_n$ is compact, by Lemma \ref{HGTIITD}, the set
$L_n=\Theta_Y(\Delta(T_n))\cap\lambda_{{\mathcal P}} Y$ also is compact for $n\in \mathbf{N}$.
Using Lemma \ref{KJGT}, and since
\[\Theta_X(T_n\vee T)=\Theta_X(T_n)\cap\Theta_X(T)=\beta X\backslash\lambda_{{\mathcal P}} X=\Theta_X(M^X_{{\mathcal P}})\]
we have $T_n\vee T=M^X_{{\mathcal P}}$, and thus
\begin{eqnarray*}
\Theta_Y\big(\Delta(T_n)\big)\cap\Theta_Y\big(\Delta(T)\big)&=&\Theta_Y\big(\Delta(T_n)\vee \Delta(T)\big)\\&=&\Theta_Y\big(\Delta(T_n\vee T)\big)\\&=&\Theta_Y\big(\Delta(M^X_{{\mathcal P}})\big)
=\Theta_Y(M^Y_{{\mathcal P}})=\beta Y\backslash\lambda_{{\mathcal P}} Y
\end{eqnarray*}
for each $n\in \mathbf{N}$. Therefore
\begin{equation}\label{JHGF}
L_n\cap \Theta_Y\big(\Delta(T)\big)=\emptyset\mbox{ for }n\in \mathbf{N}.
\end{equation}
We verify that
\begin{equation}\label{JHGHL}
(\beta Y\backslash Y)\backslash\Theta_Y\big(\Delta(T)\big)=\bigcup_{n=1}^\infty L_n
\end{equation}
from which it will then follow that $\Theta_Y(\Delta(T))\in {\mathscr Z}(\beta Y\backslash Y)$. To see this, note that $\beta Y\backslash Y$ is normal, as it is compact,  as $Y$ is locally compact. Let
$h_n:\beta Y\backslash Y\rightarrow\mathbf{I}$, for $n\in \mathbf{N}$, be continuous and such that $h_n|L_n\equiv 1$ and
$h_n|\Theta_Y(\Delta(T))\equiv 0$. Then $\Theta_Y(\Delta(T))=Z(h)$ where $h=\sum_{n=1}^\infty h_n/2^n$. But, to show (\ref{JHGHL}), by (\ref{JHGF}),
it suffices to verify that
\[(\beta Y\backslash Y)\backslash\Theta_Y\big(\Delta(T)\big)\subseteq\bigcup_{n=1}^\infty L_n.\]
Let $s\in (\beta Y\backslash Y)\backslash\Theta_Y(\Delta(T))$ and suppose to the contrary that $s\notin L_n$ for each $n\in \mathbf{N}$. Since $\Delta(T)\in{\mathscr E}_{{\mathcal P}}(Y)$, by  Lemma \ref{JHKFH} we have
$\beta Y\backslash\lambda_{{\mathcal P}} Y\subseteq \Theta_Y(\Delta(T))$, and thus $s\in \lambda_{{\mathcal P}} Y$. Let
\[S=\Theta_Y^{-1}\big((\beta Y\backslash\lambda_{{\mathcal P}} Y)\cup\{s\}\big).\]
Then by Lemma \ref{JHKFH} we have $S\in{\mathscr E}_{{\mathcal P}}(Y)$. Using Lemma \ref{KJGT}, and since
\begin{eqnarray*}
\Theta_Y\big(S\vee\Delta(T_n)\big)&=&\Theta_Y(S)\cap\Theta_Y\big(\Delta(T_n)\big)\\&=&\big((\beta Y\backslash\lambda_{{\mathcal P}} Y)\cup\{s\}\big)\cap\big((\beta Y\backslash\lambda_{{\mathcal P}} Y)\cup L_n\big)\\&=&(\beta Y\backslash\lambda_{{\mathcal P}} Y)\cup\big(\{s\}\cap\big((\beta Y\backslash\lambda_{{\mathcal P}} Y)\cup L_n\big)\big)\\&=&\beta Y\backslash\lambda_{{\mathcal P}} Y=\Theta_Y(M^Y_{{\mathcal P}})
\end{eqnarray*}
for $n\in \mathbf{N}$, we have $S\vee\Delta(T_n)=M^Y_{{\mathcal P}}$. Similarly, $S\vee\Delta(T)=M^Y_{{\mathcal P}}$. Therefore
\[\Delta^{-1}(S)\vee T_n=\Delta^{-1}(S)\vee\Delta^{-1}\big(\Delta(T_n)\big)=\Delta^{-1}\big(S\vee\Delta(T_n)\big)=\Delta^{-1}(M^Y_{{\mathcal P}})=M^X_{{\mathcal P}}\]
for $n\in \mathbf{N}$, and similarly,
$\Delta^{-1}(S)\vee T=M^X_{{\mathcal P}}$. But, now Lemma \ref{KJGT} implies that
\begin{equation}\label{KHF}
\Theta_X\big(\Delta^{-1}(S)\big)\cap\Theta_X(T_n)=\Theta_X\big(\Delta^{-1}(S)\vee T_n\big)=\Theta_X(M^X_{{\mathcal P}})=\beta X\backslash\lambda_{{\mathcal P}} X
\end{equation}
for $n\in \mathbf{N}$, and similarly,
\[\Theta_X\big(\Delta^{-1}(S)\big)\cap\Theta_X(T)=\beta X\backslash\lambda_{{\mathcal P}} X.\]
Note that by (\ref{JKGDF}) we have $\Theta_X(T_n)=(\beta X\backslash\lambda_{{\mathcal P}} X)\cup K_n$ for $n\in \mathbf{N}$ and thus using (\ref{KHF}) it follows that
\begin{eqnarray*}
\Theta_X\big(\Delta^{-1}(S)\big)\cap\bigcup_{n=1}^\infty K_n&\subseteq&\bigcup_{n=1}^\infty\big(\Theta_X\big(\Delta^{-1}(S)\big)\cap K_n\big)\\&\subseteq&\bigcup_{n=1}^\infty\big(\Theta_X\big(\Delta^{-1}(S)\big)\cap \Theta_X(T_n)\big)\subseteq\beta X\backslash\lambda_{{\mathcal P}} X
\end{eqnarray*}
and therefore
\begin{eqnarray*}
\Theta_X\big(\Delta^{-1}(S)\big)&=&\Theta_X\big(\Delta^{-1}(S)\big)\cap\big(\Theta_X(T)\cup\big((\beta X\backslash X)\backslash\Theta_X(T)\big)\big)\\&=&\big(\Theta_X\big(\Delta^{-1}(S)\big)\cap\Theta_X(T)\big)\cup\big(\Theta_X\big(\Delta^{-1}(S)\big)\cap\big((\beta X\backslash X)\backslash\Theta_X(T)\big)\big)
\\&=&(\beta X\backslash\lambda_{{\mathcal P}} X)\cup\Big(\Theta_X\big(\Delta^{-1}(S)\big)\cap\bigcup_{n=1}^\infty K_n\Big)=\beta X\backslash\lambda_{{\mathcal P}} X.
\end{eqnarray*}
But this is not possible (see (\ref{HFSD}) and (\ref{JKGDF})), as $\Delta^{-1}(S)\neq M^X_{{\mathcal P}}$,
as $S\neq M^Y_{{\mathcal P}}$. Thus $\Theta_Y(\Delta(T))\in {\mathscr Z}(\beta Y\backslash Y)$, which (since
$\Delta(T)\in{\mathscr E}_{{\mathcal P}}(Y)$)
by Lemma \ref{HDJYYH} implies that $\Delta(T)\in{\mathscr E}^C_{{\mathcal P}}(Y)$. Therefore
$\Delta({\mathscr E}^C_{{\mathcal P}}(X))\subseteq{\mathscr E}^C_{{\mathcal P}}(Y)$. A similar argument shows that
$\Delta^{-1}({\mathscr E}^C_{{\mathcal P}}(Y))\subseteq{\mathscr E}^C_{{\mathcal P}}(X)$. Thus $\Delta({\mathscr E}^C_{{\mathcal P}}(X))={\mathscr E}^C_{{\mathcal P}}(Y)$.

To show  the converse, let
\[\delta:\big({\mathscr E}^C_{{\mathcal P}}(X),\leq\big)\rightarrow\big({\mathscr E}^C_{{\mathcal P}}(Y),\leq\big)\]
be an order-isomorphism.  Let
\[f=\Theta_Y\delta\Theta_X^{-1}:\big(\Theta_X\big({\mathscr E}^C_{{\mathcal P}}(X)\big),\subseteq\big)\rightarrow\big(\Theta_Y\big({\mathscr E}^C_{{\mathcal P}}(Y)\big),\subseteq\big).\]
Then $f$ is an order-isomorphism. We extend $f$ to an order-isomorphism
\[F:\big(\Theta_X\big({\mathscr E}_{{\mathcal P}}(X)\big),\subseteq\big)\rightarrow\big(\Theta_Y\big({\mathscr E}_{{\mathcal P}}(Y)\big),\subseteq\big).\]
Let $C\in \Theta_X({\mathscr E}_{{\mathcal P}}(X))$. By Lemma \ref{JHKFH}, the set $C$ is a non-empty compact subset of
$\beta X\backslash X$ containing $\beta X\backslash\lambda_{{\mathcal P}} X$. Note that the set of zero-sets of any
Tychonoff space is a base for its closed sets. Then  $C=\bigcap_{i\in I}Z_i$, where
$Z_i\in {\mathscr Z}(\beta X\backslash X)$ for $i\in I$ and $I$ is an index set. Since  $\beta X\backslash\lambda_{{\mathcal P}} X\subseteq Z_i$,
for each $i\in I$, by Lemmas \ref{HDJYYH} and \ref{JHKFH} we have $Z_i\in \Theta_X({\mathscr E}^C_{{\mathcal P}}(X))$,
and thus  $f(Z_i)\in \Theta_Y({\mathscr E}^C_{{\mathcal P}}(Y))$. Define
\[F(C)= \bigcap_{i\in I}f(Z_i).\]
By Lemma  \ref{JHKFH} it follows that  $\beta Y\backslash\lambda_{{\mathcal P}} Y\subseteq f(Z_i)$ for  $i\in I$. Note that, since $Y$ is non-${\mathcal P}$ we have
$\beta Y\backslash\lambda_{{\mathcal P}} Y\neq\emptyset$ (see Lemma \ref{KJHG}, and  when ${\mathcal P}$ is pseudocompactness,
see Lemma \ref{JIUH}) for each $i\in I$. Also, $f(Z_i)$'s, for $i\in I$, are compact, as they are zero-sets in $\beta Y\backslash Y$ and the latter is compact, as  $Y$ is locally compact. We show that $F$ is well-defined, i.e.,
its definition is independent of the choices of $Z_i$'s.

Let $C\in \Theta_X({\mathscr E}_{{\mathcal P}}(X))$, and let
\[C=\bigcap_{i\in I}Z_i\mbox{ and }C=\bigcap_{j\in J}S_j\]
where $Z_i,S_j\in {\mathscr Z}(\beta X\backslash X)$ for $i\in I$ and  $j\in J$,  and $I$ and $J$ are index sets, be two representations for $C$. Let
$g_i:\beta X\backslash X\rightarrow\mathbf{I}$, for  $i\in I$, be continuous and such that $Z_i=Z(g_i)$. Fix some $i'\in I$ and some $n'\in \mathbf{N}$. Since
\[\bigcap_{j\in J}S_j=C\subseteq Z_{i'}=Z(g_{i'})\subseteq g_{i'}^{-1}\big([0,1/n')\big)\]
by compactness of $\beta X\backslash X$ (as $X$ is locally compact) and since the latter is open in $\beta X\backslash X$ we have
\[\bigcap_{i=1}^k S_{j_i}\subseteq g_{i'}^{-1}\big([0,1/n')\big)\subseteq g_{i'}^{-1}\big([0,1/n']\big)\]
for some $j_1,\ldots,j_k\in J$. Since
\[\beta X\backslash\lambda_{{\mathcal P}} X \subseteq C\subseteq Z_{i'}\subseteq g_{i'}^{-1}\big([0,1/n']\big)\in {\mathscr Z}(\beta X\backslash X)\]
by Lemmas \ref{HDJYYH} and \ref{JHKFH} we have $g_{i'}^{-1}([0,1/n'])\in\Theta_X({\mathscr E}^C_{{\mathcal P}}(X))$.
Since $S_j$'s for $j\in J$ (and their countable intersections) are in  $\Theta_X({\mathscr E}^C_{{\mathcal P}}(X))$ and $f$ is an  order-isomorphism, we have
\[\bigcap_{j\in J}f(S_j)\subseteq \bigcap_{i=1}^k f(S_{j_i})= f\Big(\bigcap_{i=1}^k S_{j_i}\Big)\subseteq f\big(g_{i'}^{-1}\big([0,1/n']\big)\big).\]
Now, since $n'\in \mathbf{N}$ is arbitrary, we have
\[\bigcap_{j\in J}f(S_j)\subseteq \bigcap_{n=1}^\infty f\big(g_{i'}^{-1}\big([0,1/n]\big)\big)=f\Big(\bigcap_{n=1}^\infty  g_{i'}^{-1}\big([0,1/n]\big)\Big)=f\big(Z( g_{i'})\big)=f(Z_{i'})\]
and since  $i'\in I$  is arbitrary, it follows that
\[\bigcap_{j\in J}f(S_j)\subseteq \bigcap_{i\in I}f(Z_i).\]
The reverse inclusion can be proved analogously. This shows that  $F$ is well-defined.

Now, we show that $F$ is an order-homomorphism. Let $C,D\in\Theta_X({\mathscr E}_{{\mathcal P}}(X))$ be such that $C\subseteq D$.
Let
\[C=\bigcap_{i\in I}Z_i\mbox{ and }D=\bigcap_{j\in J}S_j\]
where $Z_i,S_j\in {\mathscr Z}(\beta X\backslash X)$ for $i\in I$ and  $j\in J$,  and $I$ and $J$ are index sets. Then
\[C=C\cap D=\bigcap_{i\in I}Z_i\cap\bigcap_{j\in J}S_j\]
and therefore
\[F(C)=\bigcap_{i\in I}f(Z_i)\cap\bigcap_{j\in J}f(S_j)\subseteq\bigcap_{j\in J}f(S_j)=F(D).\]

Similarly, the order-isomorphism
\[f^{-1}:\big(\Theta_Y\big({\mathscr E}^C_{{\mathcal P}}(Y)\big),\subseteq\big)\rightarrow\big(\Theta_X\big({\mathscr E}^C_{{\mathcal P}}(X)\big),\subseteq\big).\]
induces an order-homomorphism
\[G:\big(\Theta_Y\big({\mathscr E}_{{\mathcal P}}(Y)\big),\subseteq\big)\rightarrow\big(\Theta_X\big({\mathscr E}_{{\mathcal P}}(X)\big),\subseteq\big)\]
defined by
\[G(D)= \bigcap_{j\in J}f^{-1}(S_j)\]
for $D\in \Theta_Y({\mathscr E}_{{\mathcal P}}(Y))$, whenever  $D=\bigcap_{j\in J}S_j$, where
$S_j\in {\mathscr Z}(\beta Y\backslash Y)$ for $j\in J$  and $J$ is an index set. Then for every  $C\in \Theta_X({\mathscr E}_{{\mathcal P}}(X))$,
that $C=\bigcap_{i\in I}Z_i$, where
$Z_i\in {\mathscr Z}(\beta X\backslash X)$ for $i\in I$ and  $I$ is an index set (since $f(Z_i)\in  {\mathscr Z}(\beta Y\backslash Y)$), we have
\[G\big(F(C)\big)=G\Big(\bigcap_{i\in I}f(Z_i)\Big)=\bigcap_{i\in I}f^{-1}\big(f(Z_i)\big)=\bigcap_{i\in I}Z_i=C.\]
Similarly, $F(G(D))=D$ for every $D\in \Theta_Y({\mathscr E}_{{\mathcal P}}(Y))$, which shows that $G=F^{-1}$. This shows that  $F$ is an  order-isomorphism.

By the way we have defined $F$, for every $C\in \Theta_X({\mathscr E}^C_{{\mathcal P}}(X))$ (since $C\in {\mathscr Z}(\beta X\backslash X)$, see
Lemma  \ref{HDJYYH}) we have $F(C)=f(C)$, i.e., $F$ extends $f$. If we now define
\[\Delta=\Theta_Y^{-1}F\Theta_X:\big({\mathscr E}_{{\mathcal P}}(X),\leq\big)\rightarrow\big({\mathscr E}_{{\mathcal P}}(Y),\leq\big)\]
then $\Delta$ is an  order-isomorphism, and $\Delta|{\mathscr E}^C_{{\mathcal P}}(X)=\delta$.
\end{proof}

\begin{theorem}\label{HGFFD}
Let $X$ and $Y$ be locally compact locally-${\mathcal P}$ non-${\mathcal P}$  spaces  where  ${\mathcal P}$ is either pseudocompactness or a
closed hereditary Mr\'{o}wka topological property which is preserved under  finite closed sums of subspaces.
Then every  order-isomorphism from
$({\mathscr E}^C_{{\mathcal P}}(X),\leq)$ onto $({\mathscr E}^C_{{\mathcal P}}(Y),\leq)$ restricts to an order-isomorphism
from $({\mathscr E}^K_{{\mathcal P}}(X),\leq)$ onto $({\mathscr E}^K_{{\mathcal P}}(Y),\leq)$.
Conversely, if $X$ and $Y$ are moreover  strongly zero-dimensional, then every order-isomorphism
from $({\mathscr E}^K_{{\mathcal P}}(X),\leq)$ onto $({\mathscr E}^K_{{\mathcal P}}(Y),\leq)$ extends to an order-isomorphism
from $({\mathscr E}^C_{{\mathcal P}}(X),\leq)$ onto $({\mathscr E}^C_{{\mathcal P}}(Y),\leq)$.
\end{theorem}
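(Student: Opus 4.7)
The plan is to handle the two directions separately, both anchored in the correspondences established via $\Theta_X$: by Lemmas \ref{HDJYYH} and \ref{JHKFH} a \v{C}ech-complete ${\mathcal P}$-extension corresponds to a zero-set of $\beta X \setminus X$ containing $\beta X \setminus \lambda_{{\mathcal P}} X$, while by Lemma \ref{FSYH} a locally compact ${\mathcal P}$-extension corresponds to such a zero-set that is additionally open, hence clopen in the compact space $\beta X \setminus X$.

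For the restriction direction, I would give an order-theoretic characterization of $\mathscr{E}^K_{{\mathcal P}}(X)$ inside $\mathscr{E}^C_{{\mathcal P}}(X)$: namely, $Y \in \mathscr{E}^K_{{\mathcal P}}(X)$ if and only if every countable family $\{Y_n\}_{n \in \mathbf{N}} \subseteq \mathscr{E}^C_{{\mathcal P}}(X)$ with $\bigvee_n Y_n = Y$ (the supremum existing by Lemma \ref{KJGT}(3)) admits a finite sub-family with the same supremum. The forward implication will follow because clopen-ness of $\Theta_X(Y)$ makes its complement in $\beta X \setminus X$ compact, so the finite intersection property applied to the closed family $\{\Theta_X(Y_n)\}$ yields a finite sub-intersection already equal to $\Theta_X(Y)$. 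For the converse, writing $\Theta_X(Y) = Z(g)$ for a continuous $g: \beta X \setminus X \to {\mathbf I}$, if $\Theta_X(Y)$ is not open then the nested zero-sets $Z_n = g^{-1}([0, 1/n])$ properly decrease to $\Theta_X(Y)$ without any finite sub-intersection collapsing (otherwise $g$ would be bounded away from $0$ off $Z(g)$, making $Z(g)$ open), and setting $Y_n = \Theta_X^{-1}(Z_n) \in \mathscr{E}^C_{{\mathcal P}}(X)$ witnesses the failure of the finiteness condition. Since order-isomorphisms automatically preserve existing suprema, this characterization is preserved, and the given order-isomorphism must restrict to a bijection between $\mathscr{E}^K_{{\mathcal P}}(X)$ and $\mathscr{E}^K_{{\mathcal P}}(Y)$.

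For the extension direction, I would exploit that strong zero-dimensionality makes $\beta X \setminus X$ compact and zero-dimensional, so that every closed $G_\delta$ --- in particular every zero-set --- there is a countable intersection of clopen sets (given $F = \bigcap_n U_n$ with each $U_n$ open, cover compact $F \subseteq U_n$ by finitely many basic clopen sets and take their union as $V_n$). Mimicking the converse part of Theorem \ref{JOOGF}, I would set $f = \Theta_Y \delta \Theta_X^{-1}$, an order-isomorphism between the lattices of clopen subsets of $\beta X \setminus X$ and $\beta Y \setminus Y$ containing $\beta X \setminus \lambda_{{\mathcal P}} X$ and $\beta Y \setminus \lambda_{{\mathcal P}} Y$ respectively, and extend $f$ to zero-sets by $F(Z) = \bigcap_n f(V_n)$ whenever $Z = \bigcap_n V_n$ with the $V_n$ clopen (and automatically containing $\beta X \setminus \lambda_{{\mathcal P}} X$, since they contain $Z$); because $\beta Y \setminus Y$ is compact Hausdorff hence normal, this countable intersection of clopen sets is a zero-set containing $\beta Y \setminus \lambda_{{\mathcal P}} Y$. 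The main obstacle is well-definedness of $F$: given two representations $Z = \bigcap_n V_n = \bigcap_m W_m$, I would fix $n$ and use the finite intersection property (the compact complement of $V_n$ is disjoint from $\bigcap_m W_m$) to obtain $W_{m_1} \cap \cdots \cap W_{m_k} \subseteq V_n$, then exploit that $\delta$ preserves finite suprema in $\mathscr{E}^K_{{\mathcal P}}$ to transfer this to $f(W_{m_1}) \cap \cdots \cap f(W_{m_k}) \subseteq f(V_n)$, and conclude $\bigcap_m f(W_m) \subseteq \bigcap_n f(V_n)$ (and symmetrically). Verifying that $F$ is an order-isomorphism, and that $\Delta = \Theta_Y^{-1} F \Theta_X$ extends $\delta$, then follows the pattern of the converse part of Theorem \ref{JOOGF}.
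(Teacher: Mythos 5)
Your proposal is correct and follows essentially the same route as the paper: the restriction direction rests on exactly the paper's mechanism (the nested zero-sets $h^{-1}([0,1/n])$, preservation of countable suprema via Lemma \ref{KJGT}(3), and compactness of the complement of a clopen set in $\beta X\backslash X$), merely repackaged as an explicit order-invariant characterization of ${\mathscr E}^K_{{\mathcal P}}$ inside ${\mathscr E}^C_{{\mathcal P}}$, while the extension direction reproduces the paper's construction of $F$ from clopen representations of zero-sets, including the same well-definedness argument. The only (harmless) divergence is that you obtain the representation $Z=\bigcap_n V_n$ directly from compactness and zero-dimensionality of $\beta X\backslash X$, whereas the paper first extends $Z$ to a zero-set of $\beta X$ and quotes Theorem 4.7 (j) of \cite{PW}; both are valid.
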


\begin{proof}
Let
\[\Delta:\big({\mathscr E}^C_{{\mathcal P}}(X),\leq\big)\rightarrow\big({\mathscr E}^C_{{\mathcal P}}(Y),\leq\big)\]
be an  order-isomorphism. Let $T\in {\mathscr E}^K_{{\mathcal P}}(X)$. Then $T\in {\mathscr E}^C_{{\mathcal P}}(X)$
and thus $\Delta(T)\in {\mathscr E}^C_{{\mathcal P}}(X)$. By Lemmas \ref{HDJYYH} and \ref{JHKFH}  we have  $\Theta_Y(\Delta(T))\in {\mathscr Z}(\beta Y\backslash Y)$ and
$\beta Y\backslash\lambda_{{\mathcal P}} Y\subseteq \Theta_Y(\Delta(T))$.
Let $h:\beta Y\backslash Y\rightarrow\mathbf{I}$ be continuous and such that $Z(h)=\Theta_Y(\Delta(T))$. For each $n\in \mathbf{N}$, since
$h^{-1}([0,1/n])\in  {\mathscr Z}(\beta Y\backslash Y)$ and
\[\beta Y\backslash\lambda_{{\mathcal P}} Y\subseteq \Theta_Y\big(\Delta(T)\big)=Z(h)\subseteq h^{-1}\big([0,1/n]\big)\]
by Lemmas \ref{HDJYYH} and \ref{JHKFH}, we have   $h^{-1}([0,1/n])\in\Theta_Y({\mathscr E}^C_{{\mathcal P}}(Y))$. Let
\[S_n=\Theta_Y^{-1}\big( h^{-1}\big([0,1/n]\big)\big)\in {\mathscr E}^C_{{\mathcal P}}(Y)\]
for $n\in \mathbf{N}$. Using Lemma \ref{KJGT}, it follows that
\begin{eqnarray*}
\bigcap_{n=1}^\infty\Theta_X\big(\Delta^{-1}(S_n)\big)&=&\Theta_X\Big(\bigvee_{n=1}^\infty\Delta^{-1}(S_n)\Big)\\&=&
\Theta_X\Big(\Delta^{-1}\Big(\bigvee_{n=1}^\infty S_n\Big)\Big)\\&=&\Theta_X\Big(\Delta^{-1}\Big(\Theta_Y^{-1}\Big(\bigcap_{n=1}^\infty \Theta_Y(S_n)\Big)\Big)\Big)
\\&=&\Theta_X\Big(\Delta^{-1}\Big(\Theta_Y^{-1}\Big(\bigcap_{n=1}^\infty  h^{-1}\big([0,1/n]\big)\Big)\Big)\Big)
\\&=&\Theta_X\big(\Delta^{-1}\big(\Theta_Y^{-1}\big(Z(h)\big)\big)\big)
\\&=&\Theta_X\big(\Delta^{-1}\big(\Theta_Y^{-1}\big(\Theta_Y\big(\Delta(T)\big)\big)\big)\big)=\Theta_X(T).
\end{eqnarray*}
Therefore
\[\big((\beta X\backslash X)\backslash\Theta_X(T)\big)\cap\bigcap_{n=1}^\infty\Theta_X\big(\Delta^{-1}(S_n)\big)=\emptyset\]
and, thus, since $(\beta X\backslash X)\backslash\Theta_X(T)$ is compact (as $\Theta_X(T)$ is open in $\beta X\backslash X$ and the latter is compact, as $X$ is locally compact) we have
\begin{equation}\label{HGTTD}
\big((\beta X\backslash X)\backslash\Theta_X(T)\big)\cap\bigcap_{i=1}^k\Theta_X\big(\Delta^{-1}(S_{n_i})\big)=\emptyset
\end{equation}
for some $n_1,\ldots,n_k\in  \mathbf{N}$. Arguing as above, it follows that
\begin{eqnarray*}
\bigcap_{i=1}^k\Theta_X\big(\Delta^{-1}(S_{n_i})\big)&=&\Theta_X\Big(\Delta^{-1}\Big(\Theta_Y^{-1}\Big(\bigcap_{i=1}^k  h^{-1}\big([0,1/n_i]\big)\Big)\Big)\Big)
\\&=&\Theta_X\big(\Delta^{-1}\big(\Theta_Y^{-1}\big(h^{-1}\big([0,1/t]\big)\big)\big)\big)
\end{eqnarray*}
where $t=\max\{n_1,\ldots,n_k\}$. Therefore, by (\ref{HGTTD}) we have
\[\Theta_X\big(\Delta^{-1}\big(\Theta_Y^{-1}\big(h^{-1}\big([0,1/t]\big)\big)\big)\big)\subseteq\Theta_X(T).\]
Thus
\[h^{-1}\big([0,1/t]\big)\subseteq\Theta_Y\big(\Delta(T)\big)=Z(h)\subseteq h^{-1}\big([0,1/t)\big)\]
and therefore, the set
\[\Theta_Y\big(\Delta(T)\big)= h^{-1}\big([0,1/t)\big)\]
is clopen in $\beta Y\backslash Y$. By Lemma \ref{FSYH}  it follows that $\Delta(T)\in {\mathscr E}^K_{{\mathcal P}}(Y)$, i.e.,
$\Delta({\mathscr E}^K_{{\mathcal P}}(X))\subseteq {\mathscr E}^K_{{\mathcal P}}(Y)$.
A similar argument shows that $\Delta^{-1}({\mathscr E}^K_{{\mathcal P}}(Y))\subseteq {\mathscr E}^K_{{\mathcal P}}(X)$. Thus
$\Delta({\mathscr E}^K_{{\mathcal P}}(X))={\mathscr E}^K_{{\mathcal P}}(Y)$.

To show the converse, let
\[\delta:\big({\mathscr E}^K_{{\mathcal P}}(X),\leq\big)\rightarrow\big({\mathscr E}^K_{{\mathcal P}}(Y),\leq\big)\]
be an order-isomorphism.  Let
\[f=\Theta_Y\delta\Theta_X^{-1}:\big(\Theta_X\big({\mathscr E}^K_{{\mathcal P}}(X)\big),\subseteq\big)\rightarrow\big(\Theta_Y\big({\mathscr E}^K_{{\mathcal P}}(Y)\big),\subseteq\big).\]
Then $f$ is an order-isomorphism. We extend $f$ to an order-isomorphism
\[F:\big(\Theta_X\big({\mathscr E}^C_{{\mathcal P}}(X)\big),\subseteq\big)\rightarrow\big(\Theta_Y\big({\mathscr E}^C_{{\mathcal P}}(Y)\big),\subseteq\big).\]
Let $Z\in \Theta_X({\mathscr E}^C_{{\mathcal P}}(X))$. By Lemmas \ref{HDJYYH} and \ref{JHKFH} we have  $Z\in {\mathscr Z}(\beta X\backslash X)$, $Z$ is non-empty
and  $\beta X\backslash\lambda_{{\mathcal P}} X\subseteq Z$. Let  $Z'\in {\mathscr Z}(\beta X)$ be such that $Z'\cap(\beta X\backslash X)=Z$ (such a $Z'$ exists, as $\beta X\backslash X$ is closed in the normal space $\beta X$, as $X$ is locally compact, and therefore, by the Tietze-Urysohn Theorem,
every continuous function of
$\beta X\backslash X$ into $\mathbf{I}$ is continuously extendible over $\beta X$). Note that  in a strongly zero-dimensional space every zero-set is an intersection of countably many  clopen subsets (see Theorem 4.7 (j) of \cite{PW}). Since $X$  is strongly zero-dimensional, $\beta X$ also is
strongly zero-dimensional. Thus $Z'=\bigcap_{n=1}^\infty U_n'$, where $U_n'$'s are clopen subsets of $\beta X$ for $n\in \mathbf{N}$. Therefore, if we let  $U_n=U_n'\cap(\beta X\backslash X)$ for $n\in \mathbf{N}$, then  $Z=\bigcap_{n=1}^\infty U_n$. Assume such a representation for $Z$, i.e., assume that
$Z=\bigcap_{n=1}^\infty U_n$, where $U_n$'s are clopen subsets of $\beta X\backslash X$ for $n\in \mathbf{N}$, and define
\[F(Z)=\bigcap_{n=1}^\infty f(Z_n).\]
Note that, for each $n\in \mathbf{N}$,  since $U_n$ is clopen in $\beta X\backslash X$ and
$\beta X\backslash\lambda_{{\mathcal P}} X\subseteq Z\subseteq U_n$, we have $U_n\in \Theta_X({\mathscr E}^K_{{\mathcal P}}(X))$, and thus
$f(U_n)\in \Theta_Y({\mathscr E}^K_{{\mathcal P}}(Y))$.  Thus, by  Lemma \ref{FSYH}, for each $n\in \mathbf{N}$ the set $f(U_n)$ is  clopen in $\beta Y\backslash Y$
and $\beta Y\backslash\lambda_{{\mathcal P}} Y\subseteq f(U_n)$. Therefore
$\bigcap_{n=1}^\infty f(U_n)$ is a zero-set in $\beta Y\backslash Y$  containing
$\beta Y\backslash\lambda_{{\mathcal P}} Y$ (note that the latter set is non-empty, as $Y$ is  non-${\mathcal P}$; see Lemma \ref{KJHG})
which shows that $\bigcap_{n=1}^\infty f(U_n)\in \Theta_Y({\mathscr E}^C_{{\mathcal P}}(Y))$.
To show that $F$ is well-defined, we need to verify that its definition is independent of the choices of $U_n$'s.

Let $Z\in \Theta_X({\mathscr E}^C_{{\mathcal P}}(X))$, and let
\[Z=\bigcap_{n=1}^\infty U_n\mbox{ and }Z=\bigcap_{n=1}^\infty V_n\]
where $U_n$'s and $V_n$'s, for $n\in \mathbf{N}$, are clopen in $\beta X\backslash X$, be two representations for $Z$.  Fix some  $n'\in \mathbf{N}$. Since
\[\bigcap_{n=1}^\infty U_n=Z\subseteq V_{n'}\]
by compactness of $\beta X\backslash X$ (as $X$ is locally compact) and since $V_{n'}$ is open in $\beta X\backslash X$ we have
\[\bigcap_{i=1}^kU_{n_i}\subseteq V_{n'}\]
for some $n_1,\ldots,n_k\in \mathbf{N}$. Now, since $U_n$'s for $n\in \mathbf{N}$ (and their finite intersections) are
in  $\Theta_X({\mathscr E}^K_{{\mathcal P}}(X))$ and $f$ is an  order-isomorphism, we have
\[\bigcap_{n=1}^\infty f(U_n)\subseteq\bigcap_{i=1}^kf(U_{n_i})= f\Big(\bigcap_{i=1}^kU_{n_i}\Big)\subseteq f( V_{n'}).\]
Since $n'\in \mathbf{N}$ is arbitrary, it follows that
\[\bigcap_{n=1}^\infty f(U_n)\subseteq \bigcap_{n=1}^\infty f(V_n).\]
The reverse inclusion can be proved analogously. This shows that  $F$ is well-defined.

Now, we show that $F$ is an order-homomorphism. Let $Z,S\in\Theta_X({\mathscr E}^C_{{\mathcal P}}(X))$ be such that $Z\subseteq S$.
Let
\[Z=\bigcap_{n=1}^\infty U_n\mbox{ and }S=\bigcap_{n=1}^\infty V_n\]
where
$U_n$'s and $V_n$'s are clopen subsets of $\beta X\backslash X$ for $n\in \mathbf{N}$.
Then
\[Z=Z\cap S=\bigcap_{n=1}^\infty U_n\cap\bigcap_{n=1}^\infty V_n\]
and thus
\[F(Z)=\bigcap_{n=1}^\infty f(U_n)\cap\bigcap_{n=1}^\infty f(V_n)\subseteq\bigcap_{n=1}^\infty f(V_n)=F(S).\]

Similarly, the order-isomorphism
\[f^{-1}:\big(\Theta_Y\big({\mathscr E}^K_{{\mathcal P}}(Y)\big),\subseteq\big)\rightarrow\big(\Theta_X\big({\mathscr E}^K_{{\mathcal P}}(X)\big),\subseteq\big).\]
induces an order-homomorphism
\[G:\big(\Theta_Y\big({\mathscr E}^C_{{\mathcal P}}(Y)\big),\subseteq\big)\rightarrow\big(\Theta_X\big({\mathscr E}^C_{{\mathcal P}}(X)\big),\subseteq\big)\]
defined by
\[G(S)=\bigcap_{n=1}^\infty f^{-1}(V_n)\]
for $S\in \Theta_Y({\mathscr E}^C_{{\mathcal P}}(Y))$, whenever  $S=\bigcap_{n=1}^\infty V_n$, where $V_n$'s,
for $n\in \mathbf{N}$, are clopen in $\beta Y\backslash Y$.
Now, for $Z\in \Theta_X({\mathscr E}^C_{{\mathcal P}}(X))$,
if $Z=\bigcap_{n=1}^\infty U_n$, where  $U_n$'s  are clopen in $\beta X\backslash X$ for $n\in \mathbf{N}$
(since $f(U_n)$'s  are clopen in  $\beta Y\backslash Y$  for $n\in \mathbf{N}$),  we have
\[G\big(F(Z)\big)=G\Big(\bigcap_{n=1}^\infty f(U_n)\Big)=\bigcap_{n=1}^\infty f^{-1}\big(f(U_n)\big)=\bigcap_{n=1}^\infty U_n=Z.\]
Similarly, $F(G(S))=S$ for every $S\in\Theta_Y({\mathscr E}^C_{{\mathcal P}}(Y))$, which shows that $G=F^{-1}$. Therefore, $F$ is an  order-isomorphism.

By the way we have defined $F$, for every $U\in \Theta_X({\mathscr E}^K_{{\mathcal P}}(X))$ (since by
Lemma \ref{FSYH} the set $U$ is clopen in $\beta X\backslash X$) we have $F(U)=f(U)$, i.e., $F$ extends $f$. Thus, if we let
\[\Delta=\Theta_Y^{-1}F\Theta_X:\big({\mathscr E}^C_{{\mathcal P}}(X),\leq\big)\rightarrow\big({\mathscr E}^C_{{\mathcal P}}(Y),\leq\big)\]
then $\Delta$ is an  order-isomorphism and $\Delta|{\mathscr E}^K_{{\mathcal P}}(X)=\delta$.
\end{proof}

\section{Relations between the order-structure of sets of one-point ${\mathcal P}$-extensions of a space  and the topologies  of subspaces of its outgrowth}

We begin this section with the following straightforward result from \cite{Ko2}. This   also provides motivation for what follows next.

\begin{theorem}\label{FSFH}
Let $X$ and $Y$ be locally compact spaces. The following are equivalent:
\begin{itemize}
\item[\rm(1)] $\beta X\backslash X$ and $\beta Y\backslash Y$ are homeomorphic.
\item[\rm(2)] $({\mathscr E}(X),\leq)$ and $({\mathscr E}(Y),\leq)$ are order-isomorphic.
\item[\rm(3)] $({\mathscr E}^C(X),\leq)$ and $({\mathscr E}^C(Y),\leq)$ are order-isomorphic.
\item[\rm(4)] $({\mathscr E}^K(X),\leq)$ and $({\mathscr E}^K(Y),\leq)$ are order-isomorphic, provided that $X$ and $Y$ are moreover strongly zero-dimensional.
\end{itemize}
\end{theorem}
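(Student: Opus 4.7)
The plan is to use the anti-order-isomorphism $\Theta_X$ from Theorem \ref{JHV} to translate everything into statements about families of subsets of $\beta X\backslash X$, which is compact Hausdorff because $X$ is locally compact. Under $\Theta_X$, the sets ${\mathscr E}(X)$, ${\mathscr E}^C(X)$ and ${\mathscr E}^K(X)$ correspond, respectively, to the non-empty compact subsets, the non-empty zero-sets of $\beta X\backslash X$ (by Lemma \ref{HDJYYH}), and the non-empty clopen subsets of $\beta X\backslash X$ (by Lemma \ref{FSYH}, together with compactness of $\beta X\backslash X$), each ordered by $\subseteq$. So each equivalence reduces to a statement that the given poset of subsets determines the topology of $\beta X\backslash X$.

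For (1) $\Rightarrow$ (2), (3), (4), any homeomorphism $h:\beta X\backslash X\to\beta Y\backslash Y$ sends compact subsets to compact subsets, zero-sets to zero-sets, and clopens to clopens, all while preserving inclusion. Conjugating by $\Theta_X^{-1}$ and $\Theta_Y$ then yields the required order-isomorphism in each case.

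For (2) $\Rightarrow$ (1), given an order-isomorphism $\Delta:{\mathscr E}(X)\to{\mathscr E}(Y)$, I form $\Phi=\Theta_Y\Delta\Theta_X^{-1}$, an order-isomorphism between the posets of non-empty compact subsets of the two outgrowths. The minimal non-empty elements of these posets are exactly the singletons, so $\Phi$ restricts to a bijection $h:\beta X\backslash X\to\beta Y\backslash Y$ via $\Phi(\{x\})=\{h(x)\}$. For any compact $C$ and point $x$, the chain $x\in C\Leftrightarrow\{x\}\subseteq C\Leftrightarrow\{h(x)\}\subseteq\Phi(C)\Leftrightarrow h(x)\in\Phi(C)$ forces $\Phi(C)=h(C)$. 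Since $\beta X\backslash X$ and $\beta Y\backslash Y$ are compact Hausdorff (so closed subsets coincide with compact subsets), $h$ takes closed sets bijectively to closed sets, and is therefore a homeomorphism.

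For (3) $\Rightarrow$ (2) and, under strong zero-dimensionality, (4) $\Rightarrow$ (3), the plan is to extend the given order-isomorphism on the smaller poset to an order-isomorphism on the larger one, and then invoke the implications already established. Since $\beta X\backslash X$ is compact Hausdorff and hence normal, every non-empty compact subset is an intersection of the zero-sets of $\beta X\backslash X$ containing it (by Urysohn's lemma); if in addition $X$ is strongly zero-dimensional, then so is $\beta X\backslash X$, and by Theorem 4.7(j) of \cite{PW} every zero-set of $\beta X\backslash X$ is a countable intersection of clopens. Starting from an order-isomorphism $f$ on the smaller poset, I define the extension $F$ by $F(C)=\bigcap f(Z_i)$ whenever $C=\bigcap Z_i$ with $Z_i$ zero-sets (respectively $F(Z)=\bigcap f(U_n)$ whenever $Z=\bigcap U_n$ with $U_n$ clopen). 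This is exactly the template used in the proofs of Theorems \ref{JOOGF} and \ref{HGFFD}. The main obstacle is verifying well-definedness of $F$ (independence of the representation), but the compactness argument used there goes through verbatim, with the simplification that no $\lambda_{{\mathcal P}} X$ or $M^X_{{\mathcal P}}$ intervenes. Once $F$ is seen to be a well-defined order-isomorphism extending $f$, transporting back through $\Theta_X$ and $\Theta_Y$ delivers the desired extension of the order-isomorphism to the larger class of one-point extensions.
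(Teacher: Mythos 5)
Your proposal is correct, but it takes a genuinely different route from the paper. The paper's proof is a two-line citation: after observing (via $\Theta_X$ and Lemmas \ref{HDJYYH} and \ref{FSYH}) that ${\mathscr E}(X)$, ${\mathscr E}^C(X)$ and ${\mathscr E}^K(X)$ correspond to the non-empty closed sets, zero-sets and clopen sets of the compact space $\beta X\backslash X$, it invokes the classical facts (Theorem 11.1 of \cite{B}, Section 3.2 of \cite{PW}) that each of these three posets separately determines the topology of a compact (respectively, compact zero-dimensional) space, so each of (2), (3), (4) is directly equivalent to (1). You instead chain (4) $\Rightarrow$ (3) $\Rightarrow$ (2) $\Rightarrow$ (1): you reprove the closed-set case of the Birkhoff result by hand (extracting the point bijection from the minimal elements, which is clean and correct), and you handle the zero-set and clopen cases by the extension template of Theorems \ref{JOOGF} and \ref{HGFFD} rather than by citation. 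What your approach buys is self-containedness and a pleasant unification with the later sections of the paper; what it costs is length, and one subtlety you pass over too quickly: in Theorems \ref{JOOGF} and \ref{HGFFD} every element of the relevant poset contains the fixed non-empty set $\beta X\backslash\lambda_{{\mathcal P}}X$, which is precisely what guarantees that the finite and countable intersections appearing in the well-definedness argument are non-empty and hence are genuine meets in the poset, preserved by the order-isomorphism $f$. In your setting there is no such common core, so the absence of $\lambda_{{\mathcal P}}X$ is not merely a simplification; you must observe that every intersection used in the argument contains the non-empty set $C$ (or $Z$) being represented and is therefore non-empty, after which $f$ does preserve it as a greatest lower bound. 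With that one remark supplied, the argument goes through.
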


\begin{proof}
This follows from the fact that in  a compact (compact and zero-dimensional, respectively) space the order-structure (partially ordered by $\subseteq$) of either the set of its closed subsets or the set of its zero-sets (the set of its  clopen subsets, respectively)  determines its topology  (see Theorem 11.1 of \cite{B} and Section 3.2 of \cite{PW}).
\end{proof}

\begin{notation}\label{HJFJB}
{\em For a space $X$ denote by ${\mathscr C}(X)$ the set of all closed subsets of $X$.}
\end{notation}

Part of the next theorem generalizes Theorem 3.21 and Corollary 4.11 of \cite{Ko2}.

\begin{theorem}\label{FHFH}
Let $X$ and $Y$ be locally compact locally-${\mathcal P}$ non-${\mathcal P}$  spaces,  where  ${\mathcal P}$ is either pseudocompactness, or a
closed hereditary Mr\'{o}wka topological property, which is preserved under  finite closed sums of subspaces.
The following are equivalent:
\begin{itemize}
\item[\rm(1)] $\lambda_{{\mathcal P}} X\backslash X$ and $\lambda_{{\mathcal P}} Y\backslash Y$ are homeomorphic.
\item[\rm(2)] $({\mathscr E}_{{\mathcal P}}(X),\leq)$ and $({\mathscr E}_{{\mathcal P}}(Y),\leq)$ are order-isomorphic.
\item[\rm(3)] $({\mathscr E}^C_{{\mathcal P}}(X),\leq)$ and $({\mathscr E}^C_{{\mathcal P}}(Y),\leq)$ are order-isomorphic.
\item[\rm(4)] $({\mathscr E}^K_{{\mathcal P}}(X),\leq)$ and $({\mathscr E}^K_{{\mathcal P}}(Y),\leq)$ are order-isomorphic, provided that $X$ and $Y$ are moreover strongly zero-dimensional.
\end{itemize}
\end{theorem}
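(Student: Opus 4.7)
The plan is to reduce the equivalences to a single key observation about the structure of $\Theta_X({\mathscr E}_{\mathcal{P}}(X))$, and then to invoke the two extension/restriction theorems from the previous section. Specifically, I would first establish that ${\mathscr E}_{\mathcal{P}}(X)$ is in a natural anti-order-iso correspondence with ${\mathscr C}(\lambda_{\mathcal{P}}X\backslash X)$; this handles (1)$\Leftrightarrow$(2). The remaining equivalences are immediate: (2)$\Leftrightarrow$(3) from Theorem \ref{JOOGF}, and (3)$\Leftrightarrow$(4) (under strong zero-dimensionality) from Theorem \ref{HGFFD}.

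For the key observation, note that since $X$ is locally-$\mathcal{P}$, we have $X\subseteq\lambda_{\mathcal{P}}X$ by Lemmas \ref{HGJHG} and \ref{JHG}; since $X$ is non-$\mathcal{P}$, $\lambda_{\mathcal{P}}X$ is non-compact (Lemmas \ref{JIUH} and \ref{KJHG}), so $A_X:=\beta X\backslash\lambda_{\mathcal{P}}X$ is a non-empty compact subset of $\beta X\backslash X$. Because $\lambda_{\mathcal{P}}X$ is open in $\beta X$, the set $\lambda_{\mathcal{P}}X\backslash X$ is open in the compact Hausdorff space $\beta X\backslash X$. A routine verification then shows that the assignment $C\mapsto C\cap(\lambda_{\mathcal{P}}X\backslash X)$ is an order-isomorphism from $\{C\subseteq\beta X\backslash X:C\text{ compact and }A_X\subseteq C\}$ onto ${\mathscr C}(\lambda_{\mathcal{P}}X\backslash X)$, with inverse $F\mapsto F\cup A_X$. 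Composing with $\Theta_X$ (and using Lemma \ref{JHKFH} to identify the image of $\Theta_X$ restricted to ${\mathscr E}_{\mathcal{P}}(X)$) yields an anti-order-isomorphism
\[
\Phi_X:\big({\mathscr E}_{\mathcal{P}}(X),\leq\big)\longrightarrow\big({\mathscr C}(\lambda_{\mathcal{P}}X\backslash X),\subseteq\big).
\]

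The direction (1)$\Rightarrow$(2) is then immediate: any homeomorphism $\lambda_{\mathcal{P}}X\backslash X\to\lambda_{\mathcal{P}}Y\backslash Y$ induces an order-isomorphism of closed-set lattices, and conjugating by $\Phi_X^{-1}$ and $\Phi_Y$ (two anti-order-isos compose to an order-iso) gives the desired order-iso. For the converse (2)$\Rightarrow$(1), any order-iso ${\mathscr E}_{\mathcal{P}}(X)\to{\mathscr E}_{\mathcal{P}}(Y)$ transports, via $\Phi_X$ and $\Phi_Y$, to an order-iso ${\mathscr C}(\lambda_{\mathcal{P}}X\backslash X)\to{\mathscr C}(\lambda_{\mathcal{P}}Y\backslash Y)$. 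Here I invoke the classical fact (cited for Theorem \ref{FSFH}; cf.\ Theorem 11.1 of \cite{B} and Section 3.2 of \cite{PW}) that for a $T_1$ space the lattice of closed subsets determines the topology up to homeomorphism. Since $\lambda_{\mathcal{P}}X\backslash X$ is Tychonoff as a subspace of $\beta X$, the conclusion follows.

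The equivalences (2)$\Leftrightarrow$(3) and (3)$\Leftrightarrow$(4) are then obtained from Theorems \ref{JOOGF} and \ref{HGFFD}, which state exactly that every order-iso on the smaller class extends to one on the larger class (and conversely restricts), with strong zero-dimensionality required only for the extension part in passage (3)$\Rightarrow$(4). The main conceptual obstacle is the correct identification of $\Phi_X$ and the appeal to the $T_1$-lattice recovery theorem for step (2)$\Rightarrow$(1); the verification that the natural bijection of closed sets is well-defined in both directions, and that the empty closed subset of $\lambda_{\mathcal{P}}X\backslash X$ corresponds under $\Phi_X^{-1}$ to the largest element $M^X_{\mathcal{P}}$ of Lemma \ref{KJGT}, is routine but should be written out to make the reduction airtight.
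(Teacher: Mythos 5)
Your proposal is correct and takes essentially the same route as the paper: the paper's proof of (1)$\Leftrightarrow$(2) implements exactly your correspondence $C\leftrightarrow C\cap(\lambda_{{\mathcal P}}X\backslash X)$ (via explicit mutually inverse maps between $({\mathscr E}_{{\mathcal P}}(X),\leq)$ and $({\mathscr C}(\lambda_{{\mathcal P}}X\backslash X),\subseteq)$) together with the fact that the closed-set lattice of a Hausdorff space determines its topology, and it likewise dispatches the remaining equivalences by citing Theorems \ref{JOOGF} and \ref{HGFFD}. The only cosmetic slip is that strong zero-dimensionality is needed for the extension direction (4)$\Rightarrow$(3), not (3)$\Rightarrow$(4).
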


\begin{proof}
We only prove the equivalence of (1) and (2). The equivalence of the remaining conditions follows from Theorems \ref{JOOGF} and \ref{HGFFD}.

(1) {\em  implies} (2).  Let $f:\lambda_{{\mathcal P}} X\backslash X\rightarrow\lambda_{{\mathcal P}} Y\backslash Y$ be a
homeomorphism. Let $S\in {\mathscr E}_{{\mathcal P}}(X)$. Since $\Theta_X(S)\cap(\lambda_{{\mathcal P}} X\backslash X)$ is closed in
$\lambda_{{\mathcal P}} X\backslash X$, the set $f(\Theta_X(S)\cap(\lambda_{{\mathcal P}} X\backslash X))$ is closed in  $\lambda_{{\mathcal P}} Y\backslash Y$, and thus (since $Y$ is locally compact)
\[(\beta Y\backslash\lambda_{{\mathcal P}} Y)\cup f\big(\Theta_X(S)\cap(\lambda_{{\mathcal P}} X\backslash X)\big)\]
is compact. To see this, note that if $A$ is a closed subset of $\beta Y\backslash Y$ such that
\[f\big(\Theta_X(S)\cap(\lambda_{{\mathcal P}} X\backslash X)\big)=A\cap(\lambda_{{\mathcal P}} Y\backslash Y)\]
then
\[(\beta Y\backslash\lambda_{{\mathcal P}} Y)\cup f\big(\Theta_X(S)\cap(\lambda_{{\mathcal P}} X\backslash X)\big)=A\cup(\beta Y\backslash\lambda_{{\mathcal P}} Y).\]
Define a function
\[F:\big({\mathscr E}_{{\mathcal P}}(X),\leq\big)\rightarrow\big({\mathscr E}_{{\mathcal P}}(Y),\leq\big)\]
by
\[F(S)=\Theta_Y^{-1}\big((\beta Y\backslash\lambda_{{\mathcal P}} Y)\cup f\big(\Theta_X(S)\cap(\lambda_{{\mathcal P}} X\backslash X)\big)\big)\]
for $S\in  {\mathscr E}_{{\mathcal P}}(X)$. We show that $F$ is an  order-isomorphism. The fact that $F$ is  an order-homomorphism is straightforward.
Let
\[G:\big({\mathscr E}_{{\mathcal P}}(Y),\leq\big)\rightarrow\big({\mathscr E}_{{\mathcal P}}(X),\leq\big)\]
be defined by
\[G(T)=\Theta_X^{-1}\big((\beta X\backslash\lambda_{{\mathcal P}} X)\cup f^{-1}\big(\Theta_Y(T)\cap(\lambda_{{\mathcal P}} Y\backslash Y)\big)\big)\]
for $T\in  {\mathscr E}_{{\mathcal P}}(Y)$. Note that for $S\in  {\mathscr E}_{{\mathcal P}}(X)$ we have
\begin{eqnarray*}
&&(\lambda_{{\mathcal P}} Y\backslash Y)\cap\Theta_Y\big(F(S)\big)\\&=&(\lambda_{{\mathcal P}} Y\backslash Y)\cap\Theta_Y\big(\Theta_Y^{-1}\big((\beta Y\backslash\lambda_{{\mathcal P}} Y)\cup f\big(\Theta_X(S)\cap(\lambda_{{\mathcal P}} X\backslash X)\big)\big)\big)\\&=&(\lambda_{{\mathcal P}} Y\backslash Y)\cap\big((\beta Y\backslash\lambda_{{\mathcal P}} Y)\cup f\big(\Theta_X(S)\cap(\lambda_{{\mathcal P}} X\backslash X)\big)\big)=f\big(\Theta_X(S)\cap(\lambda_{{\mathcal P}} X\backslash X)\big)
\end{eqnarray*}
and therefore
\begin{eqnarray*}
G\big(F(S)\big)&=&\Theta_X^{-1}\big((\beta X\backslash\lambda_{{\mathcal P}} X)\cup f^{-1}\big(\Theta_Y\big(F(S)\big)\cap(\lambda_{{\mathcal P}} Y\backslash Y)\big)\big)\\&=&\Theta_X^{-1}\big((\beta X\backslash\lambda_{{\mathcal P}} X)\cup f^{-1}\big(f\big(\Theta_X(S)\cap(\lambda_{{\mathcal P}} X\backslash X)\big)\big)\big)\\&=&\Theta_X^{-1}\big((\beta X\backslash\lambda_{{\mathcal P}} X)\cup \big(\Theta_X(S)\cap(\lambda_{{\mathcal P}} X\backslash X)\big)\big)=\Theta_X^{-1}\big(\Theta_X(S)\big)=S.
\end{eqnarray*}
Similarly, $F(G(T))=T$ for every  $T\in  {\mathscr E}_{{\mathcal P}}(Y)$. Thus $F^{-1}=G$.   Since $G$  is an  order-homomorphism, it follows that
$F$ is an  order-isomorphism.

(2) {\em  implies} (1). Let
\[F:\big({\mathscr E}_{{\mathcal P}}(X),\leq\big)\rightarrow\big({\mathscr E}_{{\mathcal P}}(Y),\leq\big)\]
be an order-isomorphism. Let $A\in {\mathscr C}(\lambda_{{\mathcal P}} X\backslash X)$. Let $A'$ be a closed subset of $\beta X\backslash X$ such that
$A=A'\cap(\lambda_{{\mathcal P}} X\backslash X)$. Note that $A'$ is compact, as it is closed in  $\beta X\backslash X$ and the latter is compact, as $X$ is locally compact.  Thus
(and since $\lambda_{{\mathcal P}} X$ is open in $\beta X$) the set
\[A\cup(\beta X\backslash\lambda_{{\mathcal P}} X)=A'\cup(\beta X\backslash\lambda_{{\mathcal P}} X)\]
is compact and non-empty (as $\beta X\backslash\lambda_{{\mathcal P}} X\neq\emptyset$, as $X$ is non-${\mathcal P}$, see  Lemma \ref{JIUH}  when  ${\mathcal P}$ is pseudocompactness, and see Lemma \ref{KJHG} in the other case). Define a function
\[f:\big({\mathscr C}(\lambda_{{\mathcal P}} X\backslash X),\subseteq\big)\rightarrow\big({\mathscr C}(\lambda_{{\mathcal P}} Y\backslash Y),\subseteq\big)\]
by
\[f(A)=(\lambda_{{\mathcal P}} Y\backslash Y)\cap\Theta_Y F\Theta_X^{-1}\big(A\cup(\beta X\backslash\lambda_{{\mathcal P}} X)\big)\]
for $A\in {\mathscr C}(\lambda_{{\mathcal P}} X\backslash X)$. Then $f$ is well-defined. We show that $f$ is an order-isomorphism. Since the order-structure of the set of  closed subsets of any Hausdorff space
determines its topology (see Theorem 11.1 of \cite{B}), this  will then prove (2). The fact that $f$ is an order-homomorphism is straightforward. Let the function
\[g:\big({\mathscr C}(\lambda_{{\mathcal P}} Y\backslash Y),\subseteq\big)\rightarrow\big({\mathscr C}(\lambda_{{\mathcal P}} X\backslash X),\subseteq\big)\]
be defined by
\[g(B)=(\lambda_{{\mathcal P}} X\backslash X)\cap\Theta_X F^{-1}\Theta_Y^{-1}\big(B\cup(\beta Y\backslash\lambda_{{\mathcal P}} Y)\big)\]
for $B\in {\mathscr C}(\lambda_{{\mathcal P}} Y\backslash Y)$. Note that for $A\in {\mathscr C}(\lambda_{{\mathcal P}} X\backslash X)$ we have
\begin{eqnarray*}
f(A)\cup(\beta Y\backslash\lambda_{{\mathcal P}} Y)&=&\big((\lambda_{{\mathcal P}} Y\backslash Y)\cap\Theta_Y F\Theta_X^{-1}\big(A\cup(\beta X\backslash\lambda_{{\mathcal P}} X)\big)\big)\cup(\beta Y\backslash\lambda_{{\mathcal P}} Y)\\&=&\Theta_Y F\Theta_X^{-1}\big(A\cup(\beta X\backslash\lambda_{{\mathcal P}} X)\big)
\end{eqnarray*}
and therefore
\begin{eqnarray*}
g\big(f(A)\big)&=&(\lambda_{{\mathcal P}} X\backslash X)\cap\Theta_X F^{-1}\Theta_Y^{-1}\big(f(A)\cup(\beta Y\backslash\lambda_{{\mathcal P}} Y)\big)\\&=&(\lambda_{{\mathcal P}} X\backslash X)\cap\Theta_X F^{-1}\Theta_Y^{-1}\big(\Theta_Y F\Theta_X^{-1}\big(A\cup(\beta X\backslash\lambda_{{\mathcal P}} X)\big)\big)\\&=&(\lambda_{{\mathcal P}} X\backslash X)\cap\big(A\cup(\beta X\backslash\lambda_{{\mathcal P}} X)\big)=A.
\end{eqnarray*}
Thus $gf=\mbox{id}_{{\mathscr C}(\lambda_{{\mathcal P}} X\backslash X)}$. Similarly,  $fg=\mbox{id}_{{\mathscr C}(\lambda_{{\mathcal P}} Y\backslash Y)}$.
Therefore $g=f^{-1}$. Since $g$  is an  order-homomorphism, it follows that $f$ is an  order-isomorphism.
\end{proof}

\begin{remark}\label{KHGD}
The list of topological properties ${\mathcal P}$ satisfying the assumption of Theorem \ref{FHFH} (as well as Theorem \ref{FJHFH} below) include
all topological properties (1)--(10) introduced in Example \ref{JHL} (see Remark \ref{LJG}).
\end{remark}

The following  generalizes Theorem 3.14 of  \cite{Ko2}.

\begin{theorem}\label{FJHFH}
Let $X$ and $Y$ be Tychonoff locally-${\mathcal P}$ spaces,  where  ${\mathcal P}$ is either pseudocompactness or a closed hereditary topological
property, which is preserved under  finite closed sums of subspaces. The following are equivalent:
\begin{itemize}
\item[\rm(1)] $\beta X\backslash\lambda_{{\mathcal P}} X$ and $\beta Y\backslash\lambda_{{\mathcal P}} Y$ are homeomorphic.
\item[\rm(2)] $({\mathscr E}_{{\mathcal P}-far}(X),\leq)$ and $({\mathscr E}_{{\mathcal P}-far}(Y),\leq)$ are order-isomorphic.
\end{itemize}
\end{theorem}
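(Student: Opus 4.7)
The plan is to run an argument parallel to that of Theorem \ref{FSFH}, but with the outgrowth $\beta X\backslash X$ replaced by the (generally smaller) compact space $\beta X\backslash\lambda_{{\mathcal P}} X$. The main tool is Lemma \ref{SGFH}, which identifies $\mathcal{P}$-far extensions with compact subsets of this space, combined with the classical fact (Theorem 11.1 of \cite{B}, used in the proof of Theorem \ref{FSFH}) that the topology of a compact Hausdorff space is determined by the order-structure of its lattice of closed subsets.

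First I would set up the correspondence. Since $X$ is locally-${\mathcal P}$, Lemma \ref{JHG} (in the closed hereditary case) and Lemmas \ref{HGJHG}--\ref{JIUH} (in the pseudocompactness case) give $X\subseteq\lambda_{{\mathcal P}} X$, so $\beta X\backslash\lambda_{{\mathcal P}} X\subseteq\beta X\backslash X$ is a compact Hausdorff space. By Lemma \ref{SGFH}, the anti-order-isomorphism $\Theta_X$ of Theorem \ref{JHV} restricts to an anti-order-isomorphism
\[
\Theta_X:\big({\mathscr E}_{{\mathcal P}-far}(X),\leq\big)\longrightarrow\big(\{C\subseteq\beta X\backslash\lambda_{{\mathcal P}} X:C\mbox{ compact, non-empty}\},\subseteq\big),
\]
and similarly for $Y$. (Surjectivity uses that any non-empty compact $C\subseteq\beta X\backslash\lambda_{{\mathcal P}} X$, which sits inside $\beta X\backslash X$, arises from some $Y\in{\mathscr E}(X)$ via Lemma \ref{KFH}, and this $Y$ is ${\mathcal P}$-far by Lemma \ref{SGFH}.) Since $\beta X\backslash\lambda_{{\mathcal P}} X$ is compact, its non-empty closed subsets are exactly its non-empty compact subsets.

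For (1) $\Rightarrow$ (2), a homeomorphism $h:\beta X\backslash\lambda_{{\mathcal P}} X\to\beta Y\backslash\lambda_{{\mathcal P}} Y$ induces, via $C\mapsto h(C)$, an order-isomorphism between the lattices of non-empty compact subsets. Composing gives
\[
F=\Theta_Y^{-1}\circ h_*\circ\Theta_X:\big({\mathscr E}_{{\mathcal P}-far}(X),\leq\big)\longrightarrow\big({\mathscr E}_{{\mathcal P}-far}(Y),\leq\big),
\]
which is an order-isomorphism (two anti-order-isomorphisms sandwich an order-isomorphism).

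For (2) $\Rightarrow$ (1), an order-isomorphism $F:({\mathscr E}_{{\mathcal P}-far}(X),\leq)\to({\mathscr E}_{{\mathcal P}-far}(Y),\leq)$ yields, by the same sandwich,
\[
f=\Theta_Y\circ F\circ\Theta_X^{-1}:\big(\{C\subseteq\beta X\backslash\lambda_{{\mathcal P}} X:C\mbox{ compact, non-empty}\},\subseteq\big)\longrightarrow\big(\{D\subseteq\beta Y\backslash\lambda_{{\mathcal P}} Y:D\mbox{ compact, non-empty}\},\subseteq\big),
\]
an order-isomorphism between the lattices of non-empty closed subsets of the compact Hausdorff spaces $\beta X\backslash\lambda_{{\mathcal P}} X$ and $\beta Y\backslash\lambda_{{\mathcal P}} Y$. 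Extending $f$ by sending $\emptyset\mapsto\emptyset$ produces an order-isomorphism of the full lattices ${\mathscr C}(\beta X\backslash\lambda_{{\mathcal P}} X)$ and ${\mathscr C}(\beta Y\backslash\lambda_{{\mathcal P}} Y)$; by Theorem 11.1 of \cite{B}, this forces the two underlying compact Hausdorff spaces to be homeomorphic.

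I do not expect a serious obstacle here: once the correspondence of the first paragraph is installed, both implications reduce to formal manipulations already used in Theorem \ref{FSFH}. The only point requiring any care is confirming surjectivity of $\Theta_X$ onto the non-empty compact subsets of $\beta X\backslash\lambda_{{\mathcal P}} X$, which, as noted above, relies crucially on the hypothesis that $X$ is locally-${\mathcal P}$ (so that $\beta X\backslash\lambda_{{\mathcal P}} X\subseteq\beta X\backslash X$, keeping every such compact set disjoint from $X$).
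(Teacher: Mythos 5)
Your proposal is correct and follows essentially the same route as the paper: both use Lemma \ref{SGFH} together with $X\subseteq\lambda_{{\mathcal P}}X$ (from the locally-${\mathcal P}$ hypothesis) to identify $\Theta_X({\mathscr E}_{{\mathcal P}\mbox{-}far}(X))$ with ${\mathscr C}(\beta X\backslash\lambda_{{\mathcal P}}X)\backslash\{\emptyset\}$, and then invoke Theorem 11.1 of \cite{B} to pass between the order-structure of the closed-set lattice and the topology. The paper states this more tersely, but the content, including your observation that surjectivity onto the non-empty compact subsets is where locally-${\mathcal P}$ enters, is the same.
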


\begin{proof}
By Lemmas \ref{JIUH}, \ref{HGJHG} and \ref{JHG} (and since $X$ and $Y$ are locally-${\mathcal P}$) we have $X\subseteq\lambda_{{\mathcal P}} X$ and $Y\subseteq\lambda_{{\mathcal P}} Y$. Thus, by Lemma \ref{SGFH}, we have
\[\Theta_X\big({\mathscr E}_{{\mathcal P-far}}(X)\big)={\mathscr C}(\beta X\backslash\lambda_{{\mathcal P}} X)\backslash\{\emptyset\}\mbox{ and }\Theta_Y\big({\mathscr E}_{{\mathcal P-far}}(Y)\big)={\mathscr C}(\beta Y\backslash\lambda_{{\mathcal P}} Y)\backslash\{\emptyset\}.\]
The theorem now follows, as the order-structure of the set of closed subsets of any Hausdorff space determines its topology
(see Theorem 11.1 of \cite{B}).
\end{proof}

\section{The lattice of one-point ${\mathcal P}$-extensions}

In \cite{MRW1}, for a Tychonoff space $X$ and a topological property ${\mathcal P}$, J. Mack, M. Rayburn and R.G.  Woods have studied the lattice of one-point ${\mathcal P}$-extensions of $X$. The following theorem is from \cite{MRW1}. Recall that  a space $X$ is called
{\em ${\mathcal P}$-regular} (when ${\mathcal P}$ is a  topological property) if $X$ is a subspace of some ${\mathcal P}$-space.

\begin{theorem}[Mack, Rayburn and Woods \cite{MRW1}]\label{HGGKH}
Let $X$ be a Tychonoff locally-${\mathcal P}$ ${\mathcal P}$-regular space where ${\mathcal P}$ is a closed  hereditary Mr\'{o}wka topological property such that
\begin{itemize}
\item[\rm(1)] any product of compact ${\mathcal P}$-spaces is a  ${\mathcal P}$-space
\item[\rm(2)] each ${\mathcal P}$-space is a subspace of a compact ${\mathcal P}$-space
\item[\rm(3)] any product of a ${\mathcal P}$-space and a compact ${\mathcal P}$-space is a  ${\mathcal P}$-space
\item[\rm(4)] if $Y_1$ and $Y_2$ are closed $G_\delta$ subspaces of a compact ${\mathcal P}$-space $T$ and $Y$ is any subspace of $T$ for which $Y\cup Y_i$ is a ${\mathcal P}$-space for $i=1,2$ then $Y\cup(Y_1\cap Y_2)$ is a  ${\mathcal P}$-space.
\end{itemize}
Then $({\mathscr E}_{{\mathcal P}}(X),\leq)$ is a lattice which is also a complete upper semi-lattice. Moreover,
$({\mathscr E}_{{\mathcal P}}(X),\leq)$ is a complete lattice if and only if $X$ is locally compact.
\end{theorem}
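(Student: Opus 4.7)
The plan is to reduce everything to compact subsets of $\beta X\setminus X$ via the anti-order-isomorphism $\Theta_X$ of Theorem \ref{JHV}. Because $\Theta_X$ reverses order, a join $\bigvee Y_i$ in ${\mathscr E}_{{\mathcal P}}(X)$ corresponds to the intersection $\bigcap\Theta_X(Y_i)$, while a meet corresponds to (the closure of) a union; questions about the lattice structure of ${\mathscr E}_{{\mathcal P}}(X)$ then become questions about which compact subsets of $\beta X\setminus X$ lie in $\Theta_X({\mathscr E}_{{\mathcal P}}(X))$.

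First I would establish, as an analogue of Lemma \ref{JHKFH} valid under hypotheses (1)-(4), the description
\[\Theta_X\bigl({\mathscr E}_{{\mathcal P}}(X)\bigr)=\bigl\{C\subseteq\beta X\setminus X:C\mbox{ is nonempty, compact, and }\beta X\setminus\lambda_{{\mathcal P}} X\subseteq C\bigr\}.\]
The forward inclusion is the standard Mrówka argument, following (1)$\Rightarrow$(2) of Lemma \ref{JHKFH}. For the reverse inclusion one collapses such a $C$ to a point as in Lemma \ref{KFH} and must verify the resulting one-point extension has ${\mathcal P}$; this is exactly where conditions (1)-(3), which produce a usable $\mathcal{P}$-compactification by embedding every ${\mathcal P}$-space into a compact ${\mathcal P}$-space and closing under products, and condition (4), which supplies the needed closure under intersections of closed $G_\delta$-pieces, do the work in place of the simpler ``preserved under finite closed sums'' assumption.

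With this characterization in hand, both structural claims follow almost formally. For any nonempty family $\{Y_i\}_{i\in I}\subseteq{\mathscr E}_{{\mathcal P}}(X)$ the set $\bigcap_i\Theta_X(Y_i)$ is closed in $\beta X$, contained in $\beta X\setminus X$, contains $\beta X\setminus\lambda_{{\mathcal P}} X$, and is nonempty (by that last containment in the nondegenerate case; the case $\lambda_{{\mathcal P}} X=\beta X$, which occurs precisely when $X$ already has ${\mathcal P}$, is handled by exhibiting the one-point ${\mathcal P}$-compactification directly as a top element). Thus $\bigvee_i Y_i=\Theta_X^{-1}(\bigcap_i\Theta_X(Y_i))$ exists in ${\mathscr E}_{{\mathcal P}}(X)$, giving the complete upper semi-lattice. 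For binary meets, the union $\Theta_X(Y_1)\cup\Theta_X(Y_2)$ is nonempty, compact, and contains $\beta X\setminus\lambda_{{\mathcal P}} X$, so by the characterization it lies in $\Theta_X({\mathscr E}_{{\mathcal P}}(X))$ and yields $Y_1\wedge Y_2$, completing the lattice claim.

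For the ``moreover'' clause, if $X$ is locally compact then $\beta X\setminus X$ is closed in $\beta X$ and hence compact, so for any family $\{Y_i\}_{i\in I}$ the set $\mbox{cl}_{\beta X}(\bigcup_i\Theta_X(Y_i))$ is a compact subset of $\beta X\setminus X$ still containing $\beta X\setminus\lambda_{{\mathcal P}} X$, producing the arbitrary meet. Conversely, if $X$ fails to be locally compact, I would pick a point $x\in X$ with no compact neighborhood and construct a family of one-point ${\mathcal P}$-extensions whose associated compacta accumulate at $x$; any candidate greatest lower bound would correspond to a union whose closure in $\beta X$ meets $X$, which is forbidden for elements of $\Theta_X({\mathscr E}_{{\mathcal P}}(X))$, so no meet exists. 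The principal obstacle is the first step: verifying the description of $\Theta_X({\mathscr E}_{{\mathcal P}}(X))$ from hypotheses (1)-(4), since condition (4) substitutes for finite closed sums in a nontrivial way and must be combined with the compactification machinery of (1)-(3) to see that collapsing $C$ really yields a ${\mathcal P}$-space; a secondary obstacle is arranging the witnessing family in the converse direction of the complete-lattice equivalence.
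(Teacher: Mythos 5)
The paper does not prove this statement at all: Theorem \ref{HGGKH} is quoted verbatim from Mack, Rayburn and Woods \cite{MRW1} as background, so there is no in-paper proof to compare against. What you have actually written is, in essence, the paper's proof of its \emph{own} analogous result, Theorem \ref{GKH}, which runs through Lemma \ref{JHKFH} (the identification $\Theta_X({\mathscr E}_{{\mathcal P}}(X))=\{C:\beta X\backslash\lambda_{{\mathcal P}}X\subseteq C\}$) and Lemma \ref{KJGT}. That transplant does not go through under the hypotheses of the present statement, for two reasons. First, Lemma \ref{JHKFH} is proved only for properties preserved under finite closed sums of subspaces --- that hypothesis is used essentially, via Lemma \ref{HGKFH}, to show that $Y\backslash V$ has ${\mathcal P}$ in the verification of Mr\'{o}wka's condition. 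Conditions (1)--(4) of the present theorem are a genuinely different package (they are what lets Mack, Rayburn and Woods build the extension inside a compact ${\mathcal P}$-space, which is where ${\mathcal P}$-regularity enters --- a hypothesis your argument never uses), and you have not shown that $\lambda_{{\mathcal P}}X$ still characterizes $\Theta_X({\mathscr E}_{{\mathcal P}}(X))$ under them. You name this as ``the principal obstacle'' but then assert rather than prove it; since everything else is formal bookkeeping, the proposal omits exactly the mathematical content.

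Second, the statement here does not assume $X$ is non-${\mathcal P}$ (unlike Theorem \ref{GKH}), and your parenthetical fix for the case $\lambda_{{\mathcal P}}X=\beta X$ is wrong. If $\beta X\backslash\lambda_{{\mathcal P}}X=\emptyset$ then your claimed characterization makes $\Theta_X({\mathscr E}_{{\mathcal P}}(X))$ the set of \emph{all} non-empty compact subsets of $\beta X\backslash X$ (e.g.\ $X=\mathbf{N}$ with ${\mathcal P}$ the Lindel\"{o}f property), and two disjoint such compacta admit no common upper bound, since an upper bound would correspond to a non-empty compact subset of their intersection; so even binary joins fail. Exhibiting a one-point ${\mathcal P}$-compactification does not help: under the anti-order-isomorphism $\Theta_X$ that extension corresponds to the largest compact set $\beta X\backslash X$ and is therefore the \emph{minimum} of $({\mathscr E}(X),\leq)$, not a top element, and a minimum is irrelevant to the existence of joins. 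Resolving this requires either adopting the convention of \cite{MRW1} (adjoining $X$ itself, i.e.\ the empty compact set, as the largest element) or restricting to the non-${\mathcal P}$ case as the paper does in Theorem \ref{GKH}; your write-up does neither.
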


Motivated by the above theorem, our next result deals with the lattice of one-point ${\mathcal P}$-extensions of a Tychonoff  space.

\begin{theorem}\label{GKH}
Let $X$ be a Tychonoff locally-${\mathcal P}$ non-${\mathcal P}$ space, where  ${\mathcal P}$ is  either pseudocompactness or a  closed  hereditary
Mr\'{o}wka topological  property, which is preserved under finite closed sums of subspaces. Then
$({\mathscr E}_{{\mathcal P}}(X),\leq)$ is a lattice, which is also a complete upper semi-lattice. Moreover,  $({\mathscr E}_{{\mathcal P}}(X),\leq)$
is a complete lattice if and only if $X$ is locally compact.
\end{theorem}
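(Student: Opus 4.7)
The plan is to translate everything through the anti-order-isomorphism $\Theta_X$ of Theorem \ref{JHV} and work inside the family of compact subsets of $\beta X\backslash X$. By Lemma \ref{JHKFH},
\[\Theta_X\big({\mathscr E}_{{\mathcal P}}(X)\big)=\big\{C\subseteq\beta X\backslash X:C \text{ is non-empty, compact, and } \beta X\backslash\lambda_{{\mathcal P}} X\subseteq C\big\}.\]
Two preliminary observations underpin the whole argument. First, $\beta X\backslash\lambda_{{\mathcal P}} X$ is closed in $\beta X$, hence compact, and is contained in $\beta X\backslash X$, because local-${\mathcal P}$-ness forces $X\subseteq\lambda_{{\mathcal P}} X$ by Lemmas \ref{HGJHG} and \ref{JHG}. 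Second, non-${\mathcal P}$-ness of $X$ makes $\beta X\backslash\lambda_{{\mathcal P}} X$ non-empty: in the pseudocompact case this is immediate from Lemma \ref{JIUH} (since $\upsilon X\neq\beta X$ for non-pseudocompact $X$), and in the other case it is Lemma \ref{KJHG}.

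Next I would establish the lattice and complete-upper-semilattice claims by showing that $\Theta_X({\mathscr E}_{{\mathcal P}}(X))$ is closed under binary unions and non-empty arbitrary intersections, then applying the fact that $\Theta_X$ is an anti-order-isomorphism. For any non-empty family $\{Y_i\}_{i\in I}\subseteq{\mathscr E}_{{\mathcal P}}(X)$, the intersection $\bigcap_i\Theta_X(Y_i)$ is closed in the compact space $\beta X$, hence compact, lies in $\beta X\backslash X$, and contains the non-empty $\beta X\backslash\lambda_{{\mathcal P}} X$; so it is the $\Theta_X$-image of the least upper bound $\bigvee_iY_i$ (consistent with Lemma \ref{KJGT}). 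For binary meets, the union $\Theta_X(Y_1)\cup\Theta_X(Y_2)$ is a finite union of compacts still meeting the membership conditions, and its $\Theta_X^{-1}$ is $Y_1\wedge Y_2$. Together these give the lattice and complete-upper-semilattice assertions.

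For the completeness dichotomy I would argue both directions through the same set-theoretic picture. If $X$ is locally compact, then $\beta X\backslash X$ is compact; for any non-empty family the closure $\mbox{cl}_{\beta X\backslash X}(\bigcup_i\Theta_X(Y_i))$ is closed in a compact space, hence compact, still contains $\beta X\backslash\lambda_{{\mathcal P}} X$, and is the smallest compact subset of $\beta X\backslash X$ majorizing every $\Theta_X(Y_i)$; its $\Theta_X^{-1}$ is the greatest lower bound $\bigwedge_iY_i$, giving a complete lower semilattice and hence a complete lattice. Conversely, if $X$ is not locally compact then $\beta X\backslash X$ is not compact, and I would exhibit a family with no lower bound at all: for each $t\in\beta X\backslash X$ the set $C_t=\{t\}\cup(\beta X\backslash\lambda_{{\mathcal P}} X)$ is a finite union of compacts meeting the conditions of Lemma \ref{JHKFH}, so $Y_t=\Theta_X^{-1}(C_t)\in{\mathscr E}_{{\mathcal P}}(X)$; any hypothetical lower bound $Y$ of $\{Y_t\}$ would satisfy $\Theta_X(Y)\supseteq\bigcup_{t\in\beta X\backslash X}C_t=\beta X\backslash X\supseteq\Theta_X(Y)$, forcing $\Theta_X(Y)=\beta X\backslash X$, which together with the compactness of $\Theta_X(Y)$ would contradict the non-local-compactness of $X$. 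The delicate point is this second direction: one must keep careful track of the non-emptiness of $\beta X\backslash\lambda_{{\mathcal P}} X$ so that each $C_t$ is a legitimate member of $\Theta_X({\mathscr E}_{{\mathcal P}}(X))$ and the indexing family is itself non-empty, otherwise the obstruction argument evaporates.
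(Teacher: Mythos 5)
Your proposal is correct and follows essentially the same route as the paper: everything is translated through $\Theta_X$ and the characterization of $\Theta_X({\mathscr E}_{{\mathcal P}}(X))$ in Lemma \ref{JHKFH}, with suprema realized as intersections and binary infima as unions of the corresponding compact sets. The only divergence is in the completeness dichotomy, where the paper invokes the abstract fact that a complete upper semilattice is a complete lattice iff it has a minimum (a minimum corresponding to a largest admissible compact set, which exists iff $\beta X\backslash X$ is compact), whereas you construct arbitrary infima directly as closures of unions in the locally compact case and exhibit the explicit family $\{\Theta_X^{-1}(\{t\}\cup(\beta X\backslash\lambda_{{\mathcal P}} X))\}_{t\in\beta X\backslash X}$ with no lower bound otherwise --- a slightly more self-contained variant of the same idea.
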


\begin{proof}
The fact that $({\mathscr E}_{{\mathcal P}}(X),\leq)$ is a complete upper semi-lattice follows from Lemma \ref{KJGT}. Also,
if $Y_i\in {\mathscr E}_{{\mathcal P}}(X)$, for $i=1,2$, then by Lemma \ref{JHKFH} we have
\[\Theta_X^{-1}\big(\Theta_X(Y_1)\cap\Theta_X(Y_2)\big)=Y_1\vee Y_2\mbox{ and }\Theta_X^{-1}\big(\Theta_X(Y_1)\cup\Theta_X(Y_2)\big)=Y_1\wedge Y_2.\]
Thus $({\mathscr E}_{{\mathcal P}}(X),\leq)$ is a lattice.  Note that a complete upper semi-lattice is a complete lattice if and
only if it has a minimum (see Theorem 2.1 (e) of \cite{PW}). Therefore $({\mathscr E}_{{\mathcal P}}(X),\leq)$ is a complete lattice if and only if
$({\mathscr E}_{{\mathcal P}}(X),\leq)$  has a minimum if and only if (by Lemma \ref{JHKFH}) there is a largest non-empty compact subset of
$\beta X\backslash X$ containing $\beta X\backslash\lambda_{{\mathcal P}} X$ if and only if $\beta X\backslash X$ is compact if and only if $X$ is
locally compact.
\end{proof}

\begin{remark}\label{HKHGD}
Topological properties (1)--(10) introduced in Example \ref{JHL} all satisfy the assumption of Theorem \ref{GKH} (see Remark \ref{LJG}).
\end{remark}

\section{Conjecture}

We conclude this article with the following conjecture.

\begin{conjecture}\label{HGFGKH}
Let $X$ and $Y$ be locally compact spaces. The following are equivalent:
\begin{itemize}
\item[\rm(1)] $\mbox{\em cl}_{\beta X}(\beta X\backslash\upsilon X)$ and $\mbox{\em cl}_{\beta Y}(\beta Y\backslash\upsilon Y)$ are homeomorphic.
\item[\rm(2)] $({\mathscr E}^*(X),\leq)$ and $({\mathscr E}^*(Y),\leq)$ are order-isomorphic.
\end{itemize}
\end{conjecture}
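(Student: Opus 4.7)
The plan is to recast condition (2) via the anti-order-isomorphism $\Theta_X$ of Theorem \ref{JHV}. By Lemma \ref{JYYH}, $\Theta_X$ restricts to an anti-order-isomorphism from $({\mathscr E}^*(X),\leq)$ onto $({\mathscr U}^+(X),\subseteq)$, where I abbreviate ${\mathscr U}^+(X) = \{Z \in {\mathscr Z}(\beta X) : Z \neq \emptyset,\, Z \cap X = \emptyset\}$; the same holds for $Y$. Thus condition (2) is equivalent to $({\mathscr U}^+(X),\subseteq) \cong ({\mathscr U}^+(Y),\subseteq)$ as posets. Two preliminary observations guide the argument. First, every $Z \in {\mathscr U}^+(X)$ satisfies $Z \subseteq K_X := \mbox{cl}_{\beta X}(\beta X \setminus \upsilon X)$: indeed, $\beta X \setminus Z$ is a cozero-set of $\beta X$ containing $X$, hence containing $\upsilon X$, so $Z \subseteq \beta X \setminus \upsilon X \subseteq K_X$. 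Second, $X$ locally compact implies $X$ locally pseudocompact, and by Lemma \ref{HGJHG} we obtain $X \subseteq \mbox{int}_{\beta X} \upsilon X$, whence $K_X \cap X = \emptyset$.

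For the direction (1) $\Rightarrow$ (2), given a homeomorphism $h : K_X \to K_Y$, I would define $\Phi(Z) := h(Z)$ for $Z \in {\mathscr U}^+(X)$. Since $Z \subseteq K_X$ is a zero-set of $\beta X$ and $K_X$ is closed in $\beta X$, $Z$ is a zero-set of $K_X$, so $h(Z)$ is a non-empty compact zero-set of $K_Y$ with $h(Z) \cap Y \subseteq K_Y \cap Y = \emptyset$. The crucial step is to upgrade $h(Z)$ from being a zero-set of $K_Y$ to being a zero-set of $\beta Y$, which would place it in ${\mathscr U}^+(Y)$. Applying the same construction to $h^{-1}$ would yield an inverse, and conjugating by $\Theta_X$, $\Theta_Y$ would produce the desired order-isomorphism.

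For the direction (2) $\Rightarrow$ (1), given an order-isomorphism $\phi : ({\mathscr U}^+(X),\subseteq) \to ({\mathscr U}^+(Y),\subseteq)$, I would construct $h : K_X \to K_Y$ by invoking the principle that a compact Hausdorff space's topology is determined by the order-structure of its lattice of closed subsets (Theorem 11.1 of \cite{B}). The strategy is to identify points of $K_X$ with ultrafilter-like objects in ${\mathscr U}^+(X)$ (for instance, minimal prime sub-posets), transport them via $\phi$ to analogous objects in ${\mathscr U}^+(Y)$, and verify the resulting point assignment extends to a homeomorphism. The viability of this depends on ${\mathscr U}^+(X)$ containing enough zero-sets to separate points of $K_X$ from disjoint closed subsets.

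The principal obstacle in both directions is the gap between ${\mathscr U}^+(X)$ and the full zero-set lattice ${\mathscr Z}(K_X)$. Not every zero-set of $K_X$ arises from an element of ${\mathscr U}^+(X)$: those meeting $\upsilon X$ cannot, and even those disjoint from $\upsilon X$ need not extend to zero-sets of $\beta X$. To bridge this gap I would attempt to prove that ${\mathscr U}^+(X)$ still separates $K_X$: for every $p \in K_X$ and every closed $F \subseteq K_X$ with $p \notin F$, some $Z \in {\mathscr U}^+(X)$ satisfies $p \in Z$ and $Z \cap F = \emptyset$. The local compactness of $X$ makes $K_X \subseteq \beta X \setminus X$ and, via Lemma \ref{HSJHG}, forces elements of ${\mathscr U}^+(X)$ to be regular-closed in $\beta X \setminus X$; these, together with the density of $\beta X \setminus \upsilon X$ in $K_X$, seem the natural ingredients for the separation argument. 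Establishing this separation property is where I expect the main technical effort to concentrate, as it would simultaneously close the gap in (1) $\Rightarrow$ (2) (ensuring $h$-images of ${\mathscr U}^+$-elements remain zero-sets of $\beta Y$) and in (2) $\Rightarrow$ (1) (ensuring ${\mathscr U}^+(X)$ faithfully encodes the topology of $K_X$).
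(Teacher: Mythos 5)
This statement is labelled a \emph{conjecture} in the paper: the author offers no proof, and the surrounding section consists only of a reformulation forced by the referee's counterexamples (Examples \ref{YGFSW} and \ref{IHF} show the equivalence fails without local compactness) and a remark that a ZFC proof would refute Nyikos's Axiom $\Omega$. So there is no proof in the paper to compare against, and your proposal, which candidly leaves its two central steps unresolved, does not supply one. The gaps you flag are not technical loose ends; they are precisely the open content of the conjecture. In the direction $(1)\Rightarrow(2)$, the ``upgrade'' of $h(Z)$ from a zero-set of $K_Y$ to a zero-set of $\beta Y$ is the whole problem: a compact set is a zero-set of $\beta Y$ iff it is a closed $G_\delta$ of $\beta Y$, and a closed $G_\delta$ of the subspace $K_Y$ is a $G_\delta$ of $\beta Y$ only when $K_Y$ itself is a $G_\delta$ (equivalently a zero-set) of $\beta Y$, which fails in general. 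Moreover an arbitrary homeomorphism $K_X\to K_Y$ carries no information about which closed subsets of $K_X$ happen to be traces of zero-sets of the ambient $\beta X$, so there is no reason $\Phi(Z)=h(Z)$ should land in ${\mathscr U}^+(Y)$ at all.

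In the direction $(2)\Rightarrow(1)$, the separation property you hope to establish is provably false at the points you would most need it: by your own first observation every $Z\in{\mathscr U}^+(X)$ is contained in $\beta X\setminus\upsilon X$, so no element of ${\mathscr U}^+(X)$ contains any point of $K_X\cap\upsilon X$ (the boundary points of the closure). Hence points of $K_X\setminus(\beta X\setminus\upsilon X)$ cannot be represented by ``ultrafilter-like objects'' in ${\mathscr U}^+(X)$, and Birkhoff's theorem on the lattice of \emph{all} closed sets does not apply, since ${\mathscr U}^+(X)$ is a much thinner family than ${\mathscr Z}(K_X)$ and need not even cover $K_X$. The referee's Example \ref{IHF} shows that in the absence of local compactness the poset $({\mathscr E}^*(X),\leq)$ can distinguish spaces with homeomorphic $\mbox{cl}_{\beta X}(\beta X\backslash\upsilon X)$, so any correct argument must exploit local compactness in an essential, currently unknown way; your sketch uses it only to note $K_X\cap X=\emptyset$ and to invoke Lemma \ref{HSJHG}, which is not enough. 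In short, the proposal is a reasonable framing of the problem but does not prove the statement, and the statement remains open in the paper.
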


\begin{remark}\label{POGOHD}
Conjecture \ref{HGFGKH} is different from the form it was originally stated in the early version of the article, due to the referee's comments. In the original formulation of the conjecture the spaces $X$ and $Y$ were only assumed to be Tychonoff. The referee has given examples showing that the conjecture, as it was originally stated, is indeed wrong. The rest of this section (along with the present reformulation of the conjecture) is due to the referee, who, very kindly, allowed (in fact suggested) the author to include them here as part of the article. The author is greatly indebted to the referee for this generosity, the elegant construction of the examples, and his comments and suggestions which led to considerable improvements in this section.
\end{remark}

\begin{remark}\label{UHD}
Note that if $X$ is a Tychonoff space  then $({\mathscr E}^*(X),\leq)$ and $({\mathscr E}^*(\upsilon X),\leq)$ are order-isomorphic. This is because the zero-sets of $\beta X$ which are contained in $\beta X\backslash X$ are subsets of $\beta X\backslash\upsilon X$, and these zero-sets determine the order structure of either $({\mathscr E}^*(X),\leq)$ or $({\mathscr E}^*(\upsilon X),\leq)$. Thus, as far as $({\mathscr E}^*(X),\leq)$ is concerned, one can assume that the space $X$ is realcompact.
\end{remark}

In the following, we give examples of Tychonoff spaces $X$ and $Y$ for which condition (1) in Conjecture \ref{HGFGKH} holds though condition (2) fails. Note that by Remark \ref{UHD} we can assume that spaces under consideration are all realcompact.

\begin{example}\label{YGFSW}
There are two countable spaces $X$ and $Y$, one of which is locally compact (in fact, discrete), such that $\mbox{cl}_{\beta X}(\beta X\backslash\upsilon X)$ is homeomorphic to $\mbox{cl}_{\beta Y}(\beta Y\backslash\upsilon Y)$ but $({\mathscr E}^*(X),\leq)$ is not order-isomorphic to $({\mathscr E}^*(Y),\leq)$.

For this example, note that if $X$ is (realcompact but) non-compact, the partially ordered set $({\mathscr E}^*(X),\leq)$ has a smallest element if and only if $X$ is locally compact and $\sigma$-compact, that is, if and only if $\beta X\backslash X$ is a zero-set of $\beta X$. To see this, note first that if $\beta X\backslash X$ is a zero-set of $\beta X$, and if $T\in{\mathscr E}^*(X)$, then $\Theta_X(T)$ is a zero-set of $\beta X$ which is contained in, but not equal to, $\beta X\backslash X$. Therefore, there exists a zero-set $Z$ of $\beta X$ which is disjoint from $\Theta_X(T)$. The element $\Theta_X^{-1}(Z)$ of ${\mathscr E}^*(X)$ is not comparable to $T$.

For the example, let $X=\mathbf{N}$ and $Y=\mathbf{N}\cup\{u\}$, where $u\in\beta\mathbf{N}\backslash\mathbf{N}$. Both $X$ and $Y$ are countable, and therefore, realcompact and $X$ is locally compact, while $Y$ is not. By what was just observed, $({\mathscr E}^*(X),\leq)$ has a smallest element, but $({\mathscr E}^*(Y),\leq)$ does not. So, the  partially ordered sets $({\mathscr E}^*(X),\leq)$ and $({\mathscr E}^*(Y),\leq)$ are not order-isomorphic. However,
\[\mbox{cl}_{\beta X}(\beta X\backslash\upsilon X)=\beta\mathbf{N}\backslash\mathbf{N}=\mbox{cl}_{\beta Y}(\beta Y\backslash\upsilon Y).\]
\end{example}

The above example might leave the impression that the  partially ordered sets $({\mathscr E}^*(X),\leq)$ and $({\mathscr E}^*(Y),\leq)$ can fail to be order-isomorphic only because of the difference between {\em local compactness} and {\em non-local compactness}. In the next example, neither space is locally compact at any point.  (Recall that a Tychonoff space $X$ is said to be {\em locally compact at a point $x\in X$} if $x$ has an open neighborhood $U$ in $X$ with a compact closure.) For a Tychonoff space $X$, if we denote by $R(X)$ the set of all  points of $X$ at which $X$ is locally compact, then it is well known that for any compactification $\alpha X$ of $X$ we have
\[R(X)=X\cap\mbox{cl}_{\alpha X}(\alpha X\backslash X).\]

\begin{example}\label{IHF}
There are two Lindel\"{o}f nowhere locally compact spaces $X$ and $Y$, one of which is countable such that $\mbox{cl}_{\beta X}(\beta X\backslash\upsilon X)$ is homeomorphic to $\mbox{cl}_{\beta Y}(\beta Y\backslash\upsilon Y)$ but $({\mathscr E}^*(X),\leq)$ is not order-isomorphic to $({\mathscr E}^*(Y),\leq)$.

Recall that in a partially ordered set $(P,\leq)$ a set $D\subseteq P$ is said to be {\em dense} if for each $p\in P$ there exists a $d\in D$ such that $d\leq p$. Let us say that a partially ordered set $(P,\leq)$ {\em has property $(\star)$} if there exists a countable subset $C$ of $P$ such that for each $p\in P$ there exists a $c\in C$ and a $z\in P$ such that $z\geq p$ and $z\geq c$. In other words, $(P,\leq)$ has property ($\star$) if there exists a countable subset $C$ of $P$ such that the set
\[\{z\in P:z\geq c\mbox{ for some }c\in C\}\]
is dense in  $P$ partially ordered with the reverse order of $\leq$.

Let $X=\mathbf{Q}\cap \mathbf{I}$. Of course, $X$ is homeomorphic to $\mathbf{Q}$, as it is a dense in itself countable metrizable space (see Problem 6.2.A of \cite{E}). Let $\phi:\beta X\rightarrow\mathbf{I}$ be the Stone extension of the identity map. Let $D$ be a countable dense subset of $\mathbf{I}\backslash X$ and let $Y=\phi^{-1}(\mathbf{I}\backslash D)$. Then of course $X$ is countable, and, $Y$, being the inverse image under a perfect mapping of a Lindel\"{o}f space, is Lindel\"{o}f.  Neither spaces is locally compact at any point. This is evident for $X$; we prove it for $Y$ by showing that
\begin{equation}\label{UGWE}
\mbox{cl}_{\beta Y}(\beta Y\backslash Y)=\beta Y.
\end{equation}
First, note that since $X\subseteq\phi^{-1}(X)\subseteq\phi^{-1}(\mathbf{I}\backslash D)=Y$ and $Y\subseteq\beta X$ we have $\beta Y=\beta X$. To show (\ref{UGWE}), note that
\[\beta Y\backslash Y=\beta X\backslash \phi^{-1}(\mathbf{I}\backslash D)=\phi^{-1}(D)\]
and $\phi^{-1}(D)$ is dense in $\beta X$, as it is the inverse image of the dense subset $D$ of $\mathbf{I}$ under $\phi$, and the mapping $\phi$, being the Stone extension of the identity map, is irreducible (see Lemma 6.5 (b) of \cite{PW}). Next, we show that $({\mathscr E}^*(Y),\leq)$ has property ($\star$) while $({\mathscr E}^*(X),\leq)$ does not. Note that $\phi^{-1}(d)$ is non-empty, for $d\in D$, as $\phi$ is onto, as its image is compact and thus closed in $\mathbf{I}$ and it contains the dense subset $X$ of  $\mathbf{I}$; also $\phi^{-1}(d)\in{\mathscr Z}(\beta X)$, as $\{d\}\in{\mathscr Z}(\mathbf{I})$ and $\phi^{-1}(d)\subseteq\beta Y\backslash Y$, as  $\phi^{-1}(d)\subseteq\phi^{-1}(D)$ and by above $Y\cap\phi^{-1}(D)=\emptyset$.
Let
\[{\mathscr C}=\big\{\Theta_Y^{-1}\big(\phi^{-1}(d)\big):d\in D\big\}.\]
Then ${\mathscr C}\subseteq{\mathscr E}^*(Y)$ and ${\mathscr C}$ is countable. Let $T\in{\mathscr E}^*(Y)$. Note that
\[\bigcup_{d\in D}\phi^{-1}(d)\cup Y=\phi^{-1}(D)\cup \phi^{-1}(\mathbf{I}\backslash D)=\phi^{-1}(\mathbf{I})=\beta Y.\]
Therefore $\{\phi^{-1}(d):d\in D\}$ partitions $\beta Y\backslash Y$. There exists  some $d\in D$ such that $\phi^{-1}(d)\cap \Theta_Y^{-1}(T)\neq\emptyset$. Let $C=\Theta_Y^{-1}(\phi^{-1}(d))$. Then $C\in{\mathscr C}$. Note that $\Theta_Y(T)\cap\phi^{-1}(d)$ is a non-empty zero-set of $\beta Y$ which misses $Y$. Thus $\Theta_Y(T)\cap\phi^{-1}(d)=\Theta_Y(Z)$ for some $Z\in{\mathscr E}^*(Y)$. Now since  $\Theta_Y(Z)\subseteq\Theta_Y(T)$ and $\Theta_Y(Z)\subseteq\phi^{-1}(d)=\Theta_Y(C)$ it follows that $Z\geq T$ and $Z\geq C$. This shows that $({\mathscr E}^*(Y),\leq)$ has property ($\star$). Next, to show that $({\mathscr E}^*(X),\leq)$ fails to have property ($\star$), suppose to the contrary that there exists some countable ${\mathscr C}'\subseteq{\mathscr E}^*(X)$ with the property that for each $T'\in{\mathscr E}^*(X)$ there exist some $C'\in {\mathscr C}'$ and some $Z'\in {\mathscr E}^*(X)$ such that $Z'\geq T'$ and $Z'\geq C'$. Note that
\[\bigcup_{C'\in{\mathscr C}'}\Theta_X(C')\subsetneq \beta X\backslash X\]
as $X$ is not \v{C}ech-complete (see Problem 3.9.B of \cite{E}). Choose some
\[p\in (\beta X\backslash X)\backslash\bigcup_{C'\in{\mathscr C}'}\Theta_X(C').\]
Since $X$ is realcompact, there exists some $A\in{\mathscr Z}(\beta X)$ which contains $p$ and misses $X$.  Let  $B\in{\mathscr Z}(\beta X)$ be such that $p\in B$ and  $B\cap\Theta_X(C')=\emptyset$ for every $C'\in{\mathscr C}'$. Now $A\cap B$ is a non-empty zero-set of $\beta X$ which misses $X$ and thus $A\cap B=\Theta_X(T')$ for some $T'\in{\mathscr E}^*(X)$. Now, if there exist some $C'\in {\mathscr C}'$ and some $Z'\in {\mathscr E}^*(X)$ such that $Z'\geq T'$ and $Z'\geq C'$, then $\Theta_X(Z)\subseteq\Theta_X(C')$ and $\Theta_X(Z)\subseteq A\cap B$. But this is not possible, as $A\cap B\cap\Theta_X(C')=\emptyset$. This  shows that $({\mathscr E}^*(X),\leq)$ does not have property ($\star$) and completes the proof.
\end{example}

\begin{remark}\label{JHD}
If Conjecture \ref{HGFGKH} were correct in ZFC, the result would be very important, for the following reason. It is not known whether it is consistent that  $\beta\mathbf{N}\backslash\mathbf{N}$ is homeomorphic to  $\beta D(\aleph_1)\backslash D(\aleph_1)$, where $D(\aleph_1)$ is the discrete space of cardinality $\aleph_1$. In {\em Open Problems in Topology II} \cite{Ny},  P.J. Nyikos has proposed what he calls {\em Axiom $\Omega$}, which is the assertion that these spaces are homeomorphic. (Again, it is not known whether or not Axiom $\Omega$ is consistent.) If the conjecture is correct in ZFC, then Axiom $\Omega$ fails in ZFC. The reason is that by the comments at the beginning of Example \ref{YGFSW}, the partially ordered set $({\mathscr E}^*(\mathbf{N}),\leq)$ has a smallest element, whereas $({\mathscr E}^*(D(\aleph_1)),\leq)$ does not.
\end{remark}

\noindent{\bf Acknowledgements}\\

The author is greatly indebted to the anonymous referee for careful reading of the paper and the prompt report (given within two weeks). It is a pleasure for the author to repeat his thanks to the referee for his/her comments and suggestions (particularly, the extensive comments regarding Conjecture \ref{HGFGKH}) and for  his/her  permission (in fact, suggestion) to include Examples \ref{YGFSW} and \ref{IHF} in this paper.

\end{document}